\documentclass{article}

%
\PassOptionsToPackage{numbers}{natbib}
\usepackage[preprint]{neurips_2020}

\usepackage[utf8]{inputenc} 
\usepackage[T1]{fontenc}    
\usepackage{hyperref}       
\usepackage{url}            
\usepackage{booktabs}       
\usepackage{amsfonts}       
\usepackage{nicefrac}       
\usepackage{microtype}      
\usepackage{amsmath,amssymb}
\usepackage{amsthm}
\usepackage{times}
\usepackage{multicol} 
\usepackage{bm}
\usepackage{framed}
\usepackage{graphicx} 
\usepackage{subfigure} 
\usepackage{subfigmat}

\usepackage{xr}

%
\usepackage[numbers]{natbib}

\usepackage{algorithmic}
\usepackage[boxruled]{algorithm2e}

\usepackage{color}
\usepackage{array}
\usepackage{bbm}

\usepackage[symbol]{footmisc}

\theoremstyle{theorem}
\newtheorem{thm}{Theorem}[section]
\newtheorem{lem}[thm]{Lemma}
\newtheorem{prop}[thm]{Proposition}

\theoremstyle{definition}

\newtheorem{assump}{Assumption}
\theoremstyle{remark}
\newtheorem*{rem}{Remark}

\newtheorem*{pr_s_sgd_ef_convex_bound_prop}{Proof of Proposition \ref{prop: s_sgd_ef_convex_bound}}
\newtheorem*{pr_s_sgd_ef_cum_error_bound_prop}{Proof of Proposition \ref{prop: s_sgd_ef_cum_error_bound}}
\newtheorem*{pr_s_sgd_ef_strongly_convex_thm}{Proof of Theorem \ref{thm: s_sgd_ef_strongly_convex}}

\newtheorem*{pr_s_sgd_ef_general_nonconvex_thm}{Proof of Theorem \ref{thm: s_sgd_ef_general_nonconvex}}

\newtheorem*{pr_s_sgd_ef_final_obj_gap_bound_prop}{Proof of Proposition \ref{prop: s_snag_ef_final_obj_gap_bound}}
\newtheorem*{pr_s_snag_ef_m_t_bound_prop}{Proof of Proposition \ref{prop: s_snag_ef_m_t_bound}}
\newtheorem*{pr_s_snag_ef_strongly_convex_thm}{Proof of Theorem \ref{thm: s_snag_ef_strongly_convex}}
\newtheorem*{pr_reg_s_snag_ef_general_nonconvex_thm}{Proof of Theorem \ref{thm: reg_s_snag_ef_general_nonconvex}}

\title{Accelerated Sparsified SGD with Error Feedback}

%

\author{
  Tomoya Murata\\
  NTT DATA Mathematical Systems Inc. , Tokyo, Japan\\
  Department of Mathematical Informatics, \\
  Graduate School of Information Science and Technology, \\
  The University of Tokyo, Tokyo, Japan \\
  \texttt{murata@msi.co.jp} \\
  \And
  Taiji Suzuki\\
  Department of Mathematical Informatics, \\
  Graduate School of Information Science and Technology, \\
  The University of Tokyo, Tokyo, Japan \\
  Center for Advanced Intelligence Project, RIKEN, Tokyo, Japan \\
  \texttt{taiji@mist.i.u-tokyo.ac.jp} 
}

\begin{document}

\maketitle

\begin{abstract}
A stochastic gradient method for synchronous distributed optimization is studied. For reducing communication cost, we particularly focus on utilization of compression of communicated gradients. Several work has shown that {\it{sparsified}} stochastic gradient descent method (SGD) with {\it{error feedback}} asymptotically achieves the same rate as (non-sparsified) parallel SGD. However, from a viewpoint of non-asymptotic behavior, the compression error may cause slower convergence than non-sparsified SGD in early iterations. This is problematic in practical situations since early stopping is often adopted to maximize the generalization ability of learned models. 
For improving the previous results, we propose and theoretically analyse a sparsified stochastic gradient method with error feedback scheme combined with {\it{Nesterov's acceleration}}. 
It is shown that the necessary per iteration communication cost for maintaining the same rate as vanilla SGD can be smaller than non-accelerated methods in convex and even in nonconvex optimization problems. This indicates that our proposed method makes a better use of compressed information than previous methods. Numerical experiments are provided and empirically validates our theoretical findings. 
\end{abstract}

\section{Introduction}
In typical modern machine learning tasks, we often encounter large scale optimization problems, which require huge computational time to solve. Hence, saving computational time of optimization processes is practically quite important and is main interest in the optimization community.  \par
To tackle large scale problems, a golden-standard approach is the usage of {\it{Stochastic Gradient Descent}} (SGD) method \cite{robbins1951stochastic}. For reducing loss, SGD updates the current solution by using a stochastic gradient in each iteration, that is the average of the gradients of the loss functions correspond to a random subset of the dataset (mini-batch) rather than the whole dataset. This (stochastic) mini-batch approach allows that SGD can be faster than deterministic full-batch methods in terms of computational time \cite{dekel2012optimal, li2014efficient}. Furthermore, {\it{Stochastic Nesterov's Accelerated Gradient}} (SNAG) method and its variants have been proposed \cite{hu2009accelerated, chen2012optimal, ghadimi2016accelerated}, that are based on the combination of SGD with Nesterov's acceleration \cite{nesterov2013introductory, nesterov2013gradient, tseng2008accelerated}. Mini-batch SNAG theoretically outperforms vanilla mini-batch SGD for moderate optimization accuracy, though its asymptotic convergence rate matches that of SGD. \par
For realizing further scalability, {\it{distributed optimization}} have received much research attention \cite{bekkerman2011scaling, duchi2011dual, jaggi2014communication, gemulla2011large, dean2012large, ho2013more, arjevani2015communication, chen2016revisiting, goyal2017accurate}. Distributed optimization methods are mainly classified as synchronous centralized \cite{zinkevich2010parallelized, dekel2012optimal, shamir2014distributed},  asynchronous centralized \cite{recht2011hogwild, agarwal2011distributed, lian2015asynchronous, liu2015asynchronous, zheng2017asynchronous}, synchronous decentralized  \cite{nedic2009distributed, yuan2016convergence, lian2017can, lan2017communication, uribe2017optimal, scaman2018optimal} and asynchronous decentralized \cite{lian2017asynchronous, lan2018asynchronous} ones by their communication types. In this paper, we particularly focus on data parallel stochastic gradient methods for {\it{synchronous centralized}} distributed optimization with smooth objective function $F: \mathbb{R}^d \to \mathbb{R}, F(x) = \frac{1}{P}\sum_{p=1}^P\frac{1}{N}\sum_{i=1}^N f_{i, p}(x)$, where each $\{f_{i, p}\}_{i=1}^N$ corresponds to a data partition of the whole dataset for the $p$-th node (or processor). In this setting, first each processor $p$ computes a stochastic gradient of $(1/N)\sum_{i=1}^N f_{i, p}(x)$ and then the nodes send the gradients each other. Finally, the current solution is updated using the averaged gradient on each processor. Here we assume that node-to-node broadcasts are used, but it is also possible to utilize an intermediate parameter server. \par
A main concern in synchronous distributed optimization is communication cost because it can easily be a bottleneck in optimization processes. Theoretically, naive parallel mini-batch SGD achieves linear speed up with respect to the number of processors \cite{dekel2012optimal, li2014efficient}, but not empirically due to this cost \cite{shamir2014distributed, chen2016revisiting}. For leveraging the power of parallel computing, it is essential to reduce the communication cost. \par
One of fascinating techniques for reducing communication cost in distributed optimization is compression of the communicated gradients \cite{aji2017sparse, lin2017deep, wangni2018gradient, alistarh2018convergence, stich2018sparsified, shi2019distributed, karimireddy2019error, seide20141, wen2017terngrad, alistarh2017qsgd, wu2018error}. {\it{Sparsification}} is an approach in which the gradient is compressed by sparsifying it in each local node before communication \cite{aji2017sparse, lin2017deep, wangni2018gradient, alistarh2018convergence, stich2018sparsified, shi2019distributed, karimireddy2019error}. For sparsifying a gradient, top-$k$ algorithm, that drops the $d-k$ smallest components of the gradient by absolute value from the $d$ components of the gradient, has been typically used. 
Another example of compression is quantization, which is a technique that limit the number of bits to represent the communicated gradients. Several work has demonstrated that parallel SGD with quantized gradients has good practical performance \cite{seide20141, wen2017terngrad, alistarh2017qsgd, wu2018error}. 
Particularly, Alistarh et al. \cite{alistarh2017qsgd}  have proposed Quantized SGD (QSGD), which is the first quantization algorithm with a theoretical convergence rate. QSGD is based on unbiased quantization of the communicated gradient. \par

However, theoretically there exists an essential trade-off between communication cost and convergence speed when we use naive gradient compression schemes. Specifically, naive compression (including sparsification and quantization) causes large variances and theoretically always slower than vanilla SGD, though they surely reduce the communication cost \cite{stich2018sparsified, alistarh2017qsgd}. \par
{\it{Error feedback}} scheme partially solves this trade-off problem. Some work has considered the usage of compressed gradients with the locally accumulated compression errors in each node and its effectiveness has been validated empirically \cite{aji2017sparse, lin2017deep, wu2018error}. Very recently, several work has attempted to analyse and justified the effectiveness of error feedback in a theoretical view \cite{alistarh2018convergence, stich2018sparsified, cordonnier2018convex, karimireddy2019error, tang2019doublesqueeze, zheng2019communication}. Surprisingly, it has been shown that Sparsified SGD with error feedback {\it{asymptotically}} (in terms of optimization accuracy) achieves the {\it{same}} rate as non-sparsified SGD. \par

Nevertheless, for a theoretical point of view, the method still may require large communication cost for maintaining the rate of ideal SGD particularly in early iterations due to the compression error. Therefore, more communication efficient method are desired. The goal of this paper is a creation of an algorithm that requires smaller per iteration communication cost than sparsified SGD with error feedback while it maintains the same rate as vanilla SGD.


\paragraph*{Main contribution} We construct and analyse Sparsified Stochastic Nesterov's Accelerated Gradient method (S-SNAG-EF) based on the combination of (i) unbiased sparsification of the stochastic gradients; (ii) error feedback scheme; and (iii) Nesterov's acceleration technique. The main message of this paper is the following: 
\begin{framed}
S-SNAG-EF maintains the convergence rate of vanilla SGD with per iteration communication cost $O(P^{3/4}\varepsilon d)$\footnotemark for general convex problems. In contrast, non-accelerated methods require $O(P^{1/2}\sqrt{\varepsilon}d)$. Namely, the per iteration communication cost of our method has a better dependence on $\varepsilon$ than previous methods. Also, this superiority is true even in nonconvex problems. 
\end{framed}
\footnotetext{Here, $P$ is the number of processors, $\varepsilon$ is desired optimization accuracy and $d$ is the dimension of the parameter space. $\varepsilon \leq 1/P$ is assumed for simplicity. }
We also give thorough analysis of non-accelerated sparsified SGD with error feedback based on {\it{unbiased}} random compression (we call this algorithm as S-SGD-EF in this paper) and show better results than previously known ones\footnotemark. 
A comparison of our method with the most relevant previous methods is summarized in Table \ref{table: commu_cost_comparison}. \par
 \par
\footnotetext{These improvements come from the unbiasedness of the random compression. Several previous work have given analysis of (non-accelerated) sparsified SGD with errorfeedback in distributed settings based on more general compression scheme including the unbiased random compression\citep{alistarh2018convergence,cordonnier2018convex,tang2019doublesqueeze,zheng2019communication}. However, these approaches do not fully utilize the unbiasedness of compression and the convergence rates in parallel settings are worse than ours. }

\begin{table}[]
    \centering
    \scalebox{0.94}{
    \label{table: commu_cost_comparison}
    \begin{tabular}{c c c c}
        \hline
         & \hspace{-1em}general convex & \hspace{-1em}strongly convex & \hspace{-1em}general nonconvex \\ \hline 
         S-SGD & $d$ & $d$ & $d$  \\
         S-SNAG & $d$ & $d$ & $d$ \\ 
         \begin{tabular}{c} MEM-SGD \cite{ cordonnier2018convex}\end{tabular}& $(P\sqrt{\varepsilon}\wedge 1) d$ & $(P\sqrt{\varepsilon}\wedge 1) d$ & No Analysis \\
         \begin{tabular}{c} DoubleSqueeze 
         \cite{tang2019doublesqueeze} \end{tabular} & No Analysis & No Analysis & $(P\sqrt{\varepsilon}\wedge 1) d$ \\
         {\color{red}S-SGD-EF} & {\color{red}$(\sqrt{P\varepsilon}\wedge 1)d$ }& {\color{red}$(\sqrt{P\varepsilon}\wedge 1)d$} & {\color{red}$(\sqrt{P\varepsilon}\wedge 1)d$} \\
         {\color{red}S-SNAG-EF} & {\color{red}$(P^{\frac{3}{4}}\varepsilon\wedge 1)d$} & {\color{red}$((P^{\frac{1}{3}}\mu^{\frac{1}{3}}\varepsilon^{\frac{2}{3}} + P^\frac{3}{4}\mu^\frac{1}{4}\varepsilon^\frac{3}{4})\wedge 1)d$} & {\color{red}$((P^\frac{3}{4}\varepsilon^\frac{3}{4} + P^\frac{4}{3}\varepsilon^\frac{2}{3})\wedge 1)d$}\\ \hline
    \end{tabular}
    }
        \caption{Comparison of the order of the per iteration communication cost for maintaining the rate of vanilla SGD. Here, we refer to the standard SGD iteration complexities for achieving $F(x) - F(x_*) \leq \varepsilon$ (for convex objectives) or $\|\nabla F(x)\|^2 \leq \varepsilon$ (for nonconvex ones), that are $O(1/\varepsilon + 1/(P\varepsilon^2))$ for general convex problems, $O(1/\mu + 1/(P\mu\varepsilon))$ for $\mu$-strongly convex ones and $O(1/\varepsilon + 1/(P\varepsilon^2))$ for general nonconvex ones. The per iteration communication cost is simply computed by $\widetilde{O}(kP \wedge d)$, where $d$ is the dimension of the parameter space, $k/d$ is the compression ratio for sparsified methods and $P$ is the number of processors. For simple comparison, we  assume that $\varepsilon \leq O(1/P)$. Also $L$, $\mathcal{V}$, $\Delta = F(x_{\mathrm{ini}}) - F(x_*)$, $D = \|x_{\mathrm{ini}} - x_*\|^2$ are assumed to be $\Theta(1)$. Extra logarithmic factors are ignored.
    }
\end{table}
\setlength{\textfloatsep}{8pt}
{\bf{Related Work}}\ \ \ 
We briefly describe the most relevant papers to this work. Stich et al. \cite{stich2018sparsified} have first provided theoretical analysis of sparsified SGD with error feedback (called MEM-SGD) and shown that MEM-SGD asymptotically achieves the rate of non-sparsifed SGD. However, their analysis is limited in serial computing settings, i.e., $P = 1$. Independently, Alistarh et al. \cite{alistarh2018convergence} have also theoretically considered sparsified SGD with error feedback in parallel settings for convex and nonconvex objectives. However, their analysis is still unsatisfactory because their analysis relies on an artificial assumption due to the usage of top-$k$ algorithm for gradient compression and it is unclear from their results whether the algorithm asymptotically possesses the linear speed up property with respect to the number of nodes. After their work, Cordonnier et al. \cite{cordonnier2018convex} have analyze sparsified SGD with error feedback in parallel settings and shown the linear speedup property at the stochastic error term, but not the compression error terms. Recently, Karimireddy et al. \cite{karimireddy2019error} have also analysed a variant of sparsified SGD with error feedback (called EF-SGD) for convex and nonconvex cases in serial computing settings. Differently from ours, their analysis allows non-smoothness of the objectives for convex cases, though the convergence rate is always worse than vanilla SGD and the algorithm does not possesses the asymptotic optimality.
More recently, Tang et al. \citep{tang2019doublesqueeze} have proposed and analysed Doublesqueeze in parallel and nonconvex settings. In Doublesqueeze, error feedback scheme is applied to each worker and also the parameter sever. They have also shown the linear speedup property at the stochastic error term, but not the compression error terms. Zheng et al. \citep{zheng2019communication} have proposed blockwise compression with error feedback and its acceleration by Nesterov's momentum. They have analysed the algorithms in parallel and nonconvex settings but the convergence rates are essentially same as Doublesqueeze. Importantly, they have not shown any theoretical superiority of their accelerated method to non-accelerated one.  

\section{Notation and Assumptions}
$\| \cdot \|$ denotes the Euclidean $L_2$ norm $\| \cdot \|_2$: $\|x\| = \|x\|_2 = \sqrt{\sum_{i}x_i^2}$. For natural number $m$, $[m]$ denotes the set $\{1, 2, \ldots, m\}$. 
We define $Q(z): \mathbb{R}^d \to \mathbb{R}$ as the quadratic function with center $z$, i.e., $Q(z)(x) = \|x - z\|^2$. A sparsification operator $\mathrm{RandComp}$ is defined as $\mathrm{RandComp}(x, k)j = (d/k)x_j$ for $j$ in a uniformly random subset $J$ with $\#J=k$ and $\mathrm{RandComp}(x, k)j = 0$ otherwise. 

The followings are theoretical assumptions for our analysis. These are very standard in optimization literature. We always assume the first three assumptions.
\begin{assump}\label{assump: sol_existence}
$F$ has a minimizer $x_* \in \mathbb{R}^d$. 
\end{assump}
\begin{assump}\label{assump: smoothness}
$F$ is $L$-smooth ($L > 0$), i.e., $\|\nabla F(x) - \nabla F(y)\| \leq L\|x-y\|, \forall x, y \in \mathbb{R}^d$.
\end{assump}
\begin{assump}\label{assump: bounded_variance}
$\{f_{i, p}\}_{i, p}$ has $\mathcal{V}$-bounded variance, i.e., $\frac{1}{NP}\sum_{i, p}\|\nabla f_{i, p}(x) - F(x)\|^2 \leq \mathcal{V}, \forall x \in \mathbb{R}^d$.
\end{assump}
\begin{assump}\label{assump: strong_convexity}
$F$ is $\mu$-strongly convex ($\mu >0$), i.e., $F(y) - (F(x) + \langle \nabla F(x), y - x\rangle) \geq (\mu/2)\|x - y\|^2, \forall x, y \in \mathbb{R}^d$.
\end{assump}

\section{Proposed Algorithm}
In this section, we first illustrate three core technique for constructing our algorithm. Then we describe our proposed algorithm S-SNAG-EF. \par
{\bf{Sparsifcation}}\ \ \ Gradient sparsification is quite intuitive approach for reducing communication cost in distributed optimization. Concretely, $k$ elements of local stochastic gradient over $d$ ones are selected and set the other ones to be zero on each processor. Then, the sparsified gradients are communicated between the processors. Several selection methods have been proposed, but we adopt random sparsification, that is the simplest one and desirable from a theoretical point of view because of its unbiasedness. It is known that this naive sparsification causes $d/k$-times larger variances and hence $d/k$-times slower convergence than vanilla SGD. \par
{\bf{Errorfeedback}}\ \ \ The key observation is that each processor can make use of the history of its local, but not compressed stochastic gradients for correcting the compression. For the first update, each processor sparsifies its local gradient and communicate it. Then, each processor broadcasts it to the other processors and aggregates the received gradients. The difference from naive sparsification is that each processor saves the difference of the non-compressed gradient from the compressed gradient (we call this as compression error) cumulatively. During subsequent updates, each processor sparsifies the sum of its local gradient and appropriately scaled cumulative compression error rather than the former only. We call this process as error feedback. The formal description of the algorithm are given in Algorithm \ref{alg: s_sgd_ef}. It is known that sparsified SGD with error feedback asymptotically achieves the same rate as vanilla SGD. 

\begin{algorithm}[H]
\label{alg: s_sgd_ef}
\caption{S-SGD-EF($F$, $x_{\mathrm{in}}$, $\{\eta_t\}_{t=1}^{\infty}$, $\gamma$, $k$, $T$)}
\begin{algorithmic}[1]
\STATE Set: $x_0 = x_{\mathrm{in}}$, $m_{0, p} = 0\ (p \in [P])$.
\FOR {$t=1$ to $T$}
\FOR {$p=1$ to $P$ \it{in parallel}}
\STATE Compute i.i.d. stochastic gradient of the partition of $F$: $\nabla f_{i, p}(x_{t-1})$.
\STATE Correct gradient based on cumulative compression error: $g_{t, p} = \nabla f_{i, p}(x_{t-1}) + (\gamma/\eta_t) m_{t-1, p}$.
\STATE Compress: $\bar g_{t, p} = \mathrm{RandComp}(g_{t, p}, k)$.
\STATE Update cumulative compression error: \\
$m_{t, p} = m_{t-1, p} + \eta_t(\nabla f_{i, p}(x_{t-1}) - \bar g_{t, p})$.
\ENDFOR
\STATE Broadcast and Receive: $\bar g_{t, p}\ (p \in [P])$.
\FOR {$p=1$ to $P$ \it{in parallel}}
\STATE Update solution: $x_t = x_{t-1} - \eta_t \frac{1}{P}\sum_{p=1}^P \bar g_{t, p}$.
\ENDFOR
\ENDFOR
\ENSURE $x_{\hat t}$. 
\end{algorithmic}
\end{algorithm}

\begin{rem}[Difference from previous algorithms]
Algorithm \ref{alg: s_sgd_ef} can be regard as an extension of Mem-SGD \cite{stich2018sparsified} or EF-SGD \cite{karimireddy2019error} to parallel computing settings, though these two methods mainly utilize top-$k$ compression for gradient sparsification. In constrast, we rather use unbiased random compression. This difference is essential for our analysis.
\end{rem}
{\bf{Acceleration}}\ \ \ It is well-known that Nesterov's accelerated method achieves faster convergence than vanilla GD and is optimal in convex optimization. Hence it is also expected that the acceleration is effective to improve compressed stochastic gradient methods. The most famous form of the acceleration algorithm  uses momentum: the solution is constructed as the sum of the standard gradient descent solution and the appropriately scaled momentum, that is the difference of current solution from the previous one. In an alternative form of the acceleration algorithm, we use three updated solutions, that are (i) a conservative solution (because step size is small, but updated from (iii) at each iteration); (ii) an aggressive solution (i.e., because of large step size); (iii) the convex combination of (i) and (ii). At first sight, they seem to be different algorithms, but it is easy to show the equivalency. We will adopt the latter for our algorithm. The concrete procedure of NAG is illustrated in Algorithm \ref{alg: nag}. 

\begin{algorithm}[t]
\label{alg: one_iter_nag}
\caption{OneIterNAG($x$, $y$, $z$, $\Delta_y$, $\Delta_z$, $\alpha$, $\beta$)}
\begin{algorithmic}[1]
\STATE $y = y - \Delta_y$.
\STATE $z = (1-\beta)z + \beta x - \Delta_z$.
\STATE $x = (1 - \alpha)y + \alpha z$.
\ENSURE $x, y, z$
\end{algorithmic}
\end{algorithm}
\begin{algorithm}[t]
\label{alg: nag}
\caption{NAG($F$, $x_{\mathrm{in}}$, $\{\eta_t, \lambda_t, \alpha_t, \beta_t\}_{t=1}^{\infty}$, $T$)}
\begin{algorithmic}[1]
\STATE Set: $y_0 = z_0 = x_{\mathrm{in}}$.
\FOR {$t=1$ to $T$}
\STATE $x_t, y_t, z_t = $ OneIterNAG($x_{t-1}$, $y_{t-1}$, $z_{t-1}$, $\eta_t \nabla F(x_{t-1})$, $\lambda_t \nabla F(x_{t-1})$, $\alpha_t$, $\beta_t$).
\ENDFOR
\ENSURE $y_T$
\end{algorithmic}
\end{algorithm}

{\bf{Proposed Algorithm: S-SNAG-EF}}\ \ \ The procedure of S-SNAG-EF for convex objectives is provided in Algorithm \ref{alg: s_snag_ef}. At first, each processor computes an i.i.d. stochastic gradient in line 4. In line 5 and 6, two different gradient estimators by randomly picking $k/2$-coordinates for each are computed. Also in line 7, we update three cumulative compression errors. Why are different compressed estimators and cumulative errors necessary for appropriate updates? In a typical acceleration algorithm we construct two different solution paths $\{y_t\}$ and $\{z_t\}$, and their aggregations $\{x_t\}$ as in line 11. The aggregation of the "conservative" solution $y_t$ (because of small learning rate $\eta_t$) and "aggressive" solution $z_t$ (because of large learning rate $\lambda_t$) is the essence of Nesterov's acceleration. On the other hand, from a theoretical point of view, the impact of error feedback to the vanilla stochastic gradient should be scaled to the inverse of learning rate as in line 5. Therefore, for using two different learning rates, it is necessary to construct two compressed gradient estimators and hence three compression errors. Finally, we update three solutions similar to the Nesterov's accelerated algorithm. \par

\begin{algorithm}[t]
\label{alg: s_snag_ef}
\caption{S-SNAG-EF($F$, $x_{\mathrm{in}}$,  $\{\eta_t, \lambda_t, \alpha_t, \beta_t\}_{t=1}^{\infty}$, $\gamma$, $k$, $T$)}
\begin{algorithmic}[1]
\STATE Set: $y_0 = z_0 = x_{\mathrm{in}}$, $m_{0, p} = m_{0, p}^{(y)} = m_{0, p}^{(z)} = 0\ (p \in [P])$.
\FOR {$t=1$ to $T$}
\FOR {$p=1$ to $P$ \it{in parallel}}
\STATE Compute i.i.d. stochastic gradient of the partition of $F$: 
$\nabla f_{i, p}(x_{t-1})$.
\STATE Correct gradients based on cumulative compression errors: $g_{t, p}^{(y)} = g_{t, p}^{(z)} = \nabla f_{i, p}(x_{t-1})$, \\
   $\begin{cases}g_{t, p}^{(y)} += (\gamma/\eta_t) m_{t-1, p}, \\
    g_{t, p}^{(z)} += (\gamma/\lambda_t)((1-\beta_t) m_{t-1, p}^{(z)}+\beta_t m_{t-1, p}). \end{cases}$
\STATE Compress corrected gradients: \\
    $\begin{cases}\bar g_{t, p}^{(y)} = \mathrm{RandComp}(g_{t, p}^{(y)}, k/2), \\
    \bar g_{t, p}^{(z)} = \mathrm{RandComp}(g_{t, p}^{(z)}, k/2). \end{cases}$
\STATE Update cumulative compression errors: \\
$m_{t, p}, m_{t, p}^{(y)}, m_{t, p}^{(z)} =$ OneIterNAG($m_{t-1, p}$, $m_{t-1, p}^{(y)}$, $m_{t-1, p}^{(z)}$, $\Delta_y^t$, $\Delta_z^t$, $\alpha_t$, $\beta_t$), where $\Delta_y^t = \eta_t(\bar g_{t, p}^{(y)} - \nabla f_{i, p}(x_{t-1}))$ and $\Delta_z^t = \lambda_t(\bar g_{t, p}^{(z)} - \nabla f_{i, p}(x_{t-1}))$.
\ENDFOR
\STATE Broadcast and Receive: \\
$\bar g_{t, p}^{(y)}, \bar g_{t, p}^{(z)}\ (p \in [P])$.
\FOR {$p=1$ to $P$ \it{in parallel}}
\STATE Update solutions: \\
$x_t, y_t, z_t =$ OneIterNAG($x_{t-1}$, $y_{t-1}$, $z_{t-1}$, $\eta_t \frac{1}{P}\sum_{p=1}^P \bar g_{t, p}^{(y)}$, $\lambda_t \frac{1}{P}\sum_{p=1}^P \bar g_{t, p}^{(z)}$, $\alpha_t$, $\beta_t$)
\ENDFOR
\ENDFOR
\RETURN $x_{\mathrm{out}} = y_{\hat t}$. 
\end{algorithmic}
\end{algorithm}

\begin{rem}[Parameter tuning]
It seems that Algorithm \ref{alg: s_snag_ef} has many tuning parameters. However, this is not. Specifically, as Theorem \ref{thm: s_snag_ef_strongly_convex} in Section \ref{sec: convergence_analysis} indicates, actual tuning parameters are only constant learning rate $\eta$, strong convexity $\mu$ and $\gamma$, and the other parameters are theoretically determined. This means that the additional tuning parameters compared to S-SGD-EF are essentially only strong convexity parameter $\mu$. Practically, fixing $\gamma = 0.5 \times k/d$ works well.
\end{rem}

\section{Convergence Analysis}\label{sec: convergence_analysis}
In this section, we provide convergence analysis of our proposed S-SNAG-EF. For the space limitation, we give the analysis of S-SGD-EF in Section \ref{sec: analysis_s_sgd_ef} of the supplementary material. For convex cases, we always assume the strong convexity of the objective in this paper\footnotemark. \footnotetext{For non-strongly convex cases, we can immediately derive the convergence rate from the ones for strongly convex cases by taking standard dummy regularizer approach and we omit it here.} \par 
Let $m_t$ be the mean of the cumulative compression errors of the all nodes at $t$-th iteration, i.e., $m_t = (1/P)\sum_{p=1}^P m_{t, p}$. We use $\widetilde{O}$ notation to hide additional logarithmic factors for simplicity. For the proofs of the statements, see  Section \ref{sec: analysis_s_snag_ef} of the supplementary material. \par

The following proposition holds for strongly convex objective $F$.
\begin{prop}[Strongly convex]\label{prop: s_snag_ef_final_obj_gap_bound}
Suppose that Assumptions \ref{assump: sol_existence}, \ref{assump: smoothness}, \ref{assump: bounded_variance} and \ref{assump: strong_convexity} hold. Let $\eta_t = \eta \leq 1/(2L)$, $\lambda_t = \lambda = (1/2)\sqrt{\eta/\mu}$,  $\alpha_t = \alpha = \lambda\mu/(2 + \lambda\mu)$ and $\beta_t = \beta = \lambda\mu/(1 + \lambda\mu)$. Then S-SNAG-EF satisfies
\begin{align*}
    \mathbb{E}[F(x_{\mathrm{out}}) - F(x_*)] 
    \leq&\ \Theta\left(\mu(1-\sqrt{\eta\mu})^T\|x_0 - x_*\|^2 + \sqrt{\frac{\eta}{\mu}}\frac{\mathcal{V}^2}{P} \right). \\
    &\left. + \sum_{t=1}^T(1-\sqrt{\eta\mu})^{T-t}\left(\lambda L^2\mathbb{E}\|m_{t-1}\|^2 - \eta \mathbb{E}\|\nabla F(x_{t-1})\|^2\right) + L\mathbb{E}\|m_{T-1}\|^2\right),
\end{align*}
where $x_{\mathrm{out}} = x_{T-1}$.
\end{prop}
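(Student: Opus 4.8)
The plan is to reduce S-SNAG-EF to an exact, uncompressed stochastic Nesterov run on a shifted \emph{virtual} trajectory, and then run the standard strongly-convex SNAG potential analysis on that trajectory while carrying along the residual terms produced by the shift. Write $m_t,m_t^{(y)},m_t^{(z)}$ for the processor-averaged error states and $g_t := \frac1P\sum_{p=1}^P\nabla f_{i_p,p}(x_{t-1})$ for the averaged stochastic gradient actually queried at step $t$, and set $\tilde y_t := y_t - m_t^{(y)}$, $\tilde z_t := z_t - m_t^{(z)}$, $\tilde x_t := x_t - m_t$. A direct computation with the updates in lines 7 and 12 of Algorithm \ref{alg: s_snag_ef} shows that the compressed gradients $\bar g^{(y)}_{t,\cdot},\bar g^{(z)}_{t,\cdot}$ cancel \emph{exactly} between the solution update and the error update, so that $(\tilde x_t,\tilde y_t,\tilde z_t) = \mathrm{OneIterNAG}(\tilde x_{t-1},\tilde y_{t-1},\tilde z_{t-1},\eta_t g_t,\lambda_t g_t,\alpha,\beta)$ --- i.e.\ plain SNAG driven by $g_t$, with the gradient merely evaluated at the \emph{actual} iterate $x_{t-1} = \tilde x_{t-1}+m_{t-1}$ rather than at $\tilde x_{t-1}$. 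In particular $\gamma$ and the precise form of lines 5--6 never enter this recursion; they only control the size of $m_t$, which is the subject of Proposition \ref{prop: s_snag_ef_m_t_bound}. One also has $\mathbb E[g_t\mid\mathcal F_{t-1}] = \nabla F(x_{t-1})$, $\mathbb E[\|g_t-\nabla F(x_{t-1})\|^2\mid\mathcal F_{t-1}]\le\mathcal V/P$ by independence across processors and Assumption \ref{assump: bounded_variance}, and $\tilde x_0 = x_0$, $m_0 = 0$.

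Next I would apply the usual strongly-convex SNAG Lyapunov argument to $(\tilde x_t,\tilde y_t,\tilde z_t)$ with a potential $\Phi_t := \mathbb E[F(\tilde y_t)-F(x_*)] + c\,\mathbb E\|\tilde z_t - x_*\|^2$, $c = \Theta(\mu)$, chosen compatibly with the prescribed $\lambda = \tfrac12\sqrt{\eta/\mu}$, $\alpha = \lambda\mu/(2+\lambda\mu)$, $\beta = \lambda\mu/(1+\lambda\mu)$. The standard chain of estimates --- $\mu$-strong convexity at $x_*$, the three-point inequality at the query point, and $L$-smoothness for the $\tilde y$-step --- goes through, except that wherever the argument would use $\tilde x_{t-1}$ as the query point it must use $x_{t-1}$, and each such mismatch contributes terms in $\|x_{t-1}-\tilde x_{t-1}\| = \|m_{t-1}\|$ that I would control by $L$-smoothness and Young's inequality. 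This produces a one-step bound of the form
\[
\Phi_t \;\le\; (1-\sqrt{\eta\mu})\,\Phi_{t-1} \;+\; \lambda L^2\,\mathbb E\|m_{t-1}\|^2 \;-\; \eta\,\mathbb E\|\nabla F(x_{t-1})\|^2 \;+\; (\text{per-step stochastic noise}),
\]
where I deliberately keep the descent gain $-\eta\|\nabla F(x_{t-1})\|^2$ of the $\tilde y$-step un-absorbed, because it is precisely what Proposition \ref{prop: s_snag_ef_m_t_bound} needs to neutralize the growth of $\mathbb E\|m_{t-1}\|^2$.

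Unrolling from $t=1$ to $T$ gives $\Phi_T \le (1-\sqrt{\eta\mu})^T\Phi_0 + \sum_{t=1}^T(1-\sqrt{\eta\mu})^{T-t}(\text{per-step terms})$; the geometric weights sum to $\Theta(1/\sqrt{\eta\mu})$, turning the per-step stochastic noise into the stated $\sqrt{\eta/\mu}\,\mathcal V^2/P$ floor, and $\Phi_0$ is $\Theta(\|x_0-x_*\|^2)$ by $L$-smoothness at $x_0$ (the $L/\mu$-type prefactor being absorbed into $\Theta$ as in the table's normalization). Finally, since $x_{\mathrm{out}} = x_{T-1} = \tilde x_{T-1}+m_{T-1}$, I would bound $F(\tilde x_{T-1})-F(x_*)$ by $\Phi_{T-1}$ using convexity along $\tilde x_{T-1} = (1-\alpha)\tilde y_{T-1}+\alpha\tilde z_{T-1}$ and $L$-smoothness at $x_*$, then pass to $F(x_{\mathrm{out}})$ via $F(x_{\mathrm{out}}) \le F(\tilde x_{T-1}) + \langle\nabla F(\tilde x_{T-1}),m_{T-1}\rangle + \tfrac L2\|m_{T-1}\|^2$, bounding the cross term with Young's inequality and $\|\nabla F(\tilde x_{T-1})\|^2 \le 2L(F(\tilde x_{T-1})-F(x_*))$; this yields the trailing $L\,\mathbb E\|m_{T-1}\|^2$ at the cost of a constant factor in $\Theta$. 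Collecting the four contributions gives the claim.

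The main obstacle is the one-step bound: the query-point perturbation $m_{t-1}$ has to be routed through every inequality of the strongly-convex SNAG argument while simultaneously (a) preserving the contraction factor \emph{exactly} as $1-\sqrt{\eta\mu}$ --- which is what forces the particular formulas for $\lambda,\alpha,\beta$ and dictates how the Young's-inequality splits must be weighted --- and (b) making the residual emerge precisely as $\lambda L^2\|m_{t-1}\|^2$ paired against the retained $-\eta\|\nabla F(x_{t-1})\|^2$, since a coarser split would leave error terms that the downstream argument (Proposition \ref{prop: s_snag_ef_m_t_bound} and Theorem \ref{thm: s_snag_ef_strongly_convex}) could no longer close.
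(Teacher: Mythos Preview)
Your overall architecture is exactly the paper's: define the virtual SNAG trajectory $(\tilde x_t,\tilde y_t,\tilde z_t)$, observe that the compression cancels so that the virtual run is plain SNAG with gradient queried at the \emph{actual} $x_{t-1}=\tilde x_{t-1}+m_{t-1}$, then run the strongly-convex SNAG potential argument with the mismatch routed through $L$-smoothness/Young into a $\Theta(\lambda L^2)\|m_{t-1}\|^2$ residual while retaining $-\Theta(\eta)\|\nabla F(x_{t-1})\|^2$. Up through the unrolled bound on $\Phi_T$ your plan and the paper's Proposition~\ref{prop: s_snag_ef_ideal_obj_gap_bound} are the same argument.

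The gap is in your final conversion from $\Phi$ to $F(x_{\mathrm{out}})$. You propose to bound $F(\tilde x_{T-1})-F(x_*)$ by $\Phi_{T-1}$ via convexity along $\tilde x_{T-1}=(1-\alpha)\tilde y_{T-1}+\alpha\tilde z_{T-1}$ and $L$-smoothness at $x_*$. That yields
\[
F(\tilde x_{T-1})-F(x_*)\;\le\;(1-\alpha)\bigl(F(\tilde y_{T-1})-F(x_*)\bigr)+\tfrac{\alpha L}{2}\,\|\tilde z_{T-1}-x_*\|^2,
\]
but the coefficient on $\|\tilde z_{T-1}-x_*\|^2$ in $\Phi_{T-1}$ is only $c=\Theta(\mu)$, while $\alpha L=\Theta\bigl(L\sqrt{\eta\mu}\bigr)=\Theta\bigl(\sqrt{L\mu}\bigr)$ at $\eta\asymp 1/L$. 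So your bound is off by a factor $\sqrt{L/\mu}$, which is the condition number's square root and cannot be hidden in~$\Theta$; this would in particular destroy the $\mu(1-\sqrt{\eta\mu})^T\|x_0-x_*\|^2$ leading term and propagate into Theorem~\ref{thm: s_snag_ef_strongly_convex}.

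The paper sidesteps $\tilde z$ entirely in the conversion (Lemma~\ref{lem: s_snag_ef_conversion}): it compares $F(x_{T-1})$ not with $F(\tilde y_{T-1})$ but with $F(\tilde y_T)$, using convexity twice --- once from $x_{T-1}$ to $\tilde x_{T-1}$ and once from $\tilde x_{T-1}$ to $\tilde y_T=\tilde x_{T-1}-\eta g_T$ --- together with $\|\nabla F(x_{T-1})\|^2\le 2L(F(x_{T-1})-F(x_*))$ to close the loop. This produces $\mathbb{E}[F(x_{T-1})-F(x_*)]\le\Theta\bigl(\mathbb{E}[F(\tilde y_T)-F(x_*)]+L\,\mathbb{E}\|m_{T-1}\|^2\bigr)$ with only absolute constants, and then the already-established bound on $F(\tilde y_T)-F(x_*)$ (which is what your unrolling of $\Phi_T$ gives once you drop the nonnegative $c\|\tilde z_T-x_*\|^2$) finishes the proof. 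Replacing your last paragraph with this conversion repairs the argument.
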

\begin{rem}
The first deterministic error term is scaled to $(1 - \sqrt{\eta\mu})^T$ rather than $(1 - \eta\mu)^T$ thanks to the acceleration scheme at the expense of $1/\sqrt{\eta\mu}$ times larger stochastic error (the second term) than the one of vanilla SGD. This evokes the bias-variance trade-off in the rate of  vanilla accelerated SGD. The third and last terms are the compression error caused by the gradient sparsification.
\end{rem}
The compression error terms are bounded by the following proposition.
\begin{prop}\label{prop: s_snag_ef_m_t_bound}
Suppose that Assumptions \ref{assump: bounded_variance} holds. Let  $\gamma = \Theta(k/d)$,  $\beta_t \leq \Theta(\gamma^3/\alpha_t^2)$ be sufficiently small and $\{\alpha_t\}$ is monotonically non-increasing. Then S-SNAG-EF satisfies
\begin{align*}
    \mathbb{E}\|m_t\|^2 \leq \Theta\left(\sum_{t'=1}^t\frac{(\eta_{t'}^2 + (\alpha_{t'}^2/\gamma^2)\lambda_{t'}^2)d}{kP}(1 - \gamma)^{t - t'}(\mathcal{V} + \mathbb{E}\|\nabla F(x_{t'-1})\|^2)\right).
\end{align*}
\end{prop}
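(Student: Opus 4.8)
The plan is to track, in tandem, the averaged cumulative errors $m_t = \frac1P\sum_p m_{t,p}$, $m_t^{(y)} = \frac1P\sum_p m_{t,p}^{(y)}$, $m_t^{(z)} = \frac1P\sum_p m_{t,p}^{(z)}$ and the per-processor second moments $S_t = \frac1P\sum_p\mathbb{E}\|m_{t,p}\|^2$, $S_t^{(y)}$, $S_t^{(z)}$, deriving a coupled system of recursions for these quantities and exhibiting a potential that contracts geometrically at rate $1-\gamma$. First I would rewrite the cumulative-error update of line~7 of Algorithm~\ref{alg: s_snag_ef}, averaged over $p$. Writing $\xi_{t,p}^{(y)} = \bar g_{t,p}^{(y)} - g_{t,p}^{(y)}$ and $\xi_{t,p}^{(z)} = \bar g_{t,p}^{(z)} - g_{t,p}^{(z)}$ for the compression noises and using $\bar g_{t,p}^{(y)} - \nabla f_{i,p}(x_{t-1}) = (\gamma/\eta_t)m_{t-1,p} + \xi_{t,p}^{(y)}$ together with the analogue for the $z$-path, the definitions of $\Delta_y^t,\Delta_z^t$ make the averaged update collapse to
\begin{align*}
    m_t^{(y)} &= m_{t-1}^{(y)} - \gamma m_{t-1} - \eta_t\bar\xi_t^{(y)}, \\
    m_t^{(z)} &= (1-\gamma)\bigl((1-\beta_t)m_{t-1}^{(z)} + \beta_t m_{t-1}\bigr) - \lambda_t\bar\xi_t^{(z)}, \\
    m_t &= (1-\alpha_t)m_t^{(y)} + \alpha_t m_t^{(z)},
\end{align*}
with $\bar\xi_t^{(\cdot)} = \frac1P\sum_p\xi_{t,p}^{(\cdot)}$, and the per-processor versions are identical in form. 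The point built into the error-feedback design is that $\mathrm{RandComp}(\cdot,k/2)$ is unbiased, so $\mathbb{E}[\xi_{t,p}^{(\cdot)}\mid\mathcal{F}_{t-1}]=0$; the $\gamma$ multiplying $m_{t-1}$ in the first two lines is precisely the $(\gamma/\eta_t)$- and $(\gamma/\lambda_t)$-scaled feedback of lines~5--7 surviving the cancellation, and it is what makes the deterministic parts of these recursions contract.

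Second I would bound the noise. Since the compression draws are independent across processors and conditionally mean-zero, $\mathbb{E}\|\bar\xi_t^{(y)}\|^2 = \frac1{P^2}\sum_p\mathbb{E}\|\xi_{t,p}^{(y)}\|^2$, and with the variance identity $\mathbb{E}\|\mathrm{RandComp}(x,k/2)-x\|^2 = (2d/k-1)\|x\|^2$ this gives $\mathbb{E}\|\bar\xi_t^{(y)}\|^2 \le \frac{2d}{kP}\cdot\frac1P\sum_p\mathbb{E}\|g_{t,p}^{(y)}\|^2$ --- here the compression factor $d/k$ and the parallel gain $1/P$ enter. Expanding $g_{t,p}^{(y)} = \nabla f_{i,p}(x_{t-1}) + (\gamma/\eta_t)m_{t-1,p}$ and invoking Assumption~\ref{assump: bounded_variance} in the form $\frac1P\sum_p\mathbb{E}\|\nabla f_{i,p}(x)\|^2 = O(\mathcal{V}+\|\nabla F(x)\|^2)$ yields $\frac1P\sum_p\mathbb{E}\|g_{t,p}^{(y)}\|^2 = O\bigl(\mathcal{V} + \mathbb{E}\|\nabla F(x_{t-1})\|^2 + (\gamma/\eta_t)^2 S_{t-1}\bigr)$, and likewise for the $z$-path with $\lambda_t$ in place of $\eta_t$ and $(1-\beta_t)S_{t-1}^{(z)}+\beta_t S_{t-1}$ in place of $S_{t-1}$ (using convexity of $\|\cdot\|^2$). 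This is exactly why the per-processor second moments $S_t^{(\cdot)}$ must be carried through the recursion: the noise amplitude at step $t$ depends on $S_{t-1}$, not on $\mathbb{E}\|m_{t-1}\|^2$.

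Third --- the heart of the argument --- I would close the coupled recursion. Taking squared norms and using conditional unbiasedness of $\bar\xi_t^{(\cdot)}$ to kill cross terms: for the $z$-path split $\|(1-\beta_t)m_{t-1}^{(z)}+\beta_t m_{t-1}\|^2 \le (1-\beta_t)\|m_{t-1}^{(z)}\|^2 + \beta_t\|m_{t-1}\|^2$ by convexity, and for the $y$-path write $m_{t-1}^{(y)} - \gamma m_{t-1} = (1-\gamma)m_{t-1}^{(y)} + \gamma\alpha_{t-1}(m_{t-1}^{(y)} - m_{t-1}^{(z)})$ and split with a Young parameter of order $\gamma$. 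Substituting $m_{t-1} = (1-\alpha_{t-1})m_{t-1}^{(y)} + \alpha_{t-1}m_{t-1}^{(z)}$, each of $\mathbb{E}\|m_t^{(y)}\|^2$, $\mathbb{E}\|m_t^{(z)}\|^2$, $S_t^{(y)}$, $S_t^{(z)}$ obeys an inequality of the shape (contraction of its own previous value) $+$ (leakage from the other path) $+$ (noise from step~2), where the $y\!\to\!z$ leakage carries only a factor $\beta_t$ while the $z\!\to\!y$ leakage carries $\gamma\alpha_{t-1}^2$. I would combine these into the potential $\Phi_t = \mathbb{E}\|m_t^{(y)}\|^2 + S_t^{(y)} + c_t\bigl(\mathbb{E}\|m_t^{(z)}\|^2 + S_t^{(z)}\bigr)$ with $c_t = \Theta(\alpha_t^2/\gamma^2)$; this weight is forced both by the target's $(\alpha_t^2/\gamma^2)\lambda_t^2$ coefficient and by the two leakage bounds --- the $z\!\to\!y$ term $\gamma\alpha_{t-1}^2\mathbb{E}\|m^{(z)}\|^2$ (weight $1$) is dominated by the $z$-slack $\Theta(\gamma)c_{t-1}\mathbb{E}\|m^{(z)}\|^2$ automatically, whereas the $y\!\to\!z$ term $\beta_t\mathbb{E}\|m^{(y)}\|^2$ (weight $c_t$) is dominated by the $y$-slack $\Theta(\gamma)\mathbb{E}\|m^{(y)}\|^2$ exactly when $c_t\beta_t = O(\gamma)$, i.e.\ $\beta_t \le \Theta(\gamma^3/\alpha_t^2)$. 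Finally, $\gamma = \Theta(k/d)$ with a small enough constant makes the self-amplification $(\gamma/\eta_t)^2\eta_t^2(d/k) = \gamma^2 d/k = \Theta(\gamma)$ subordinate to the contraction slack, and monotonicity of $\alpha_t$ makes $c_t$ non-increasing so the $z$-part of $\Phi_t$ still contracts; together these give
\[
    \Phi_t \le (1-\Theta(\gamma))\Phi_{t-1} + \Theta\!\left(\frac{(\eta_t^2 + (\alpha_t^2/\gamma^2)\lambda_t^2)d}{kP}\right)\bigl(\mathcal{V} + \mathbb{E}\|\nabla F(x_{t-1})\|^2\bigr).
\]

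Unrolling this scalar recursion from $t'=1$ to $t$ gives $\Phi_t \le \Theta\bigl(\sum_{t'=1}^t (1-\gamma)^{t-t'}\frac{(\eta_{t'}^2 + (\alpha_{t'}^2/\gamma^2)\lambda_{t'}^2)d}{kP}(\mathcal{V} + \mathbb{E}\|\nabla F(x_{t'-1})\|^2)\bigr)$, and since $\mathbb{E}\|m_t\|^2 \le 2\mathbb{E}\|m_t^{(y)}\|^2 + 2\alpha_t^2\mathbb{E}\|m_t^{(z)}\|^2 \le 2\Phi_t$ (using $\alpha_t^2 \le c_t$), the claim follows. I expect the main obstacle to be step three: making the two accelerated error paths contract at the common rate $1-\gamma$ while keeping the $\eta$- and $\lambda$-dependence of the noise coefficients separated exactly as in the statement --- this is what pins down both the weight $c_t = \Theta(\alpha_t^2/\gamma^2)$ and the threshold $\beta_t \le \Theta(\gamma^3/\alpha_t^2)$, and it requires getting the constants in the asymmetric $z\!\to\!y$ versus $y\!\to\!z$ leakage right and checking that non-increasing $\alpha_t$ genuinely suffices to keep $c_t$ slowly varying.
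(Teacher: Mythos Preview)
Your argument has a genuine gap at the $1/P$ factor. The potential $\Phi_t$ carries the per-processor second moment $S_t^{(y)} = \frac{1}{P}\sum_p\mathbb{E}\|m_{t,p}^{(y)}\|^2$ with unit weight, but the recursion for $S_t^{(y)}$ does \emph{not} enjoy the parallel gain: the per-processor compression noise satisfies only $\mathbb{E}\|\xi_{t,p}^{(y)}\|^2 \le \Theta(d/k)\,\|g_{t,p}^{(y)}\|^2$, with no $1/P$, so averaging over $p$ gives
\[
  S_t^{(y)} \;\le\; (1-\Theta(\gamma))\,S_{t-1} \;+\; \Theta\!\left(\frac{\eta_t^2 d}{k}\right)\bigl(\mathcal{V} + \mathbb{E}\|\nabla F(x_{t-1})\|^2\bigr).
\]
Hence the claimed recursion $\Phi_t \le (1-\Theta(\gamma))\Phi_{t-1} + \Theta\bigl(d/(kP)\bigr)(\ldots)$ is false --- the driving term is $\Theta(d/k)$, not $\Theta(d/(kP))$ --- and unrolling yields a bound a factor of $P$ too large. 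The very reason you introduced $S_t$ (``the noise amplitude depends on $S_{t-1}$, not on $\mathbb{E}\|m_{t-1}\|^2$'') is precisely what breaks the argument.

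The paper resolves this with an orthogonality lemma rather than by tracking $S_t$. Because $\mathrm{RandComp}$ is unbiased and the compression draws are independent across processors, one proves by induction that $\mathbb{E}\langle m_{t,p_1}^{(y)}, m_{t,p_2}^{(y)}\rangle = \mathbb{E}\langle m_{t,p_1}^{(y)}, m_{t,p_2}^{(z)}\rangle = \mathbb{E}\langle m_{t,p_1}^{(z)}, m_{t,p_2}^{(z)}\rangle = 0$ for all $p_1 \neq p_2$ and all $t$. This gives the exact identity $\frac{1}{P^2}\sum_p\mathbb{E}\|m_{t-1,p}\|^2 = \mathbb{E}\|m_{t-1}\|^2$, so the self-amplification term in your step~2 is already $\Theta(\gamma)\,\mathbb{E}\|m_{t-1}\|^2$ and there is no need to carry $S_t$ at all: the recursion for the pair $\bigl(\mathbb{E}\|m_t\|^2,\,\mathbb{E}\|m_t^{(z)}\|^2\bigr)$ closes directly with the $1/P$ factor intact. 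The paper's potential is simply $\mathbb{E}\|m_t\|^2 + c_t\,\mathbb{E}\|m_t^{(z)}\|^2$ with the same $c_t = \Theta(\alpha_t^2/\gamma^2)$ and the same threshold $\beta_t \le \Theta(\gamma^3/\alpha_t^2)$ you identified. As a side remark, the intended $y$-update in OneIterNAG is $y \leftarrow x - \Delta_y$ (consistent with the paper's analysis $y_t = x_{t-1} - \eta_t\bar g_t^{(y)}$), which makes the $y$-error recursion the cleaner $m_t^{(y)} = (1-\gamma)m_{t-1} - \eta_t\bar\xi_t^{(y)}$ and removes the $z\!\to\!y$ leakage you had to handle separately.
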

\begin{rem}
This proposition shows that the cumulative compression error is bounded even if $t \to \infty$. This is the key property for obtaining the asymptotical rate of vanilla SGD. Also note hat the cumulative compression error has a factor $1/P$, this does not arise in any previous analysis. 
\end{rem}
Combining Proposition \ref{prop: s_snag_ef_final_obj_gap_bound} and \ref{prop: s_snag_ef_m_t_bound} yields the following theorem. 
\begin{thm}[Strongly convex]\label{thm: s_snag_ef_strongly_convex}
Suppose that Assumptions \ref{assump: sol_existence}, \ref{assump: smoothness}, \ref{assump: bounded_variance} and \ref{assump: strong_convexity} hold. 
Let $\lambda_t, \alpha_t$ and $\beta_t$ are the same ones in Proposition \ref{prop: s_snag_ef_final_obj_gap_bound} and $\gamma = \Theta(k/d)$ be sufficiently small. Then the iteration complexity $T$ of S-SNAG-EF with appropriate $\eta_t = \eta$  to acheive $\mathbb{E}[F(x_{\mathrm{out}}) - F(x_*)] \leq \varepsilon$ is 
\begin{align}
    \widetilde O&\left( \sqrt{\frac{L}{\mu}} + \frac{\mathcal{V}}{P}\frac{1}{\mu\varepsilon} + \frac{d}{k} + \frac{d^{\frac{3}{2}}}{k^{\frac{3}{2}}\sqrt{P}}\sqrt{\frac{L}{\mu}} + \frac{d^{\frac{4}{3}}}{k^{\frac{4}{3}}P^{\frac{1}{3}}}\frac{L^{\frac{2}{3}}}{\mu^{\frac{2}{3}}} + \frac{d}{kP^{\frac{1}{4}}}\frac{(L^2\mathcal{V})^{\frac{1}{4}}}{\mu^{\frac{3}{4}}\varepsilon^{\frac{1}{4}}}\right), \label{iter_comp: s_snag_ef}
\end{align}
where $x_{\mathrm{out}} = x_T$.
\end{thm}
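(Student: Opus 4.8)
The plan is to feed the cumulative-error bound of Proposition~\ref{prop: s_snag_ef_m_t_bound} into the objective-gap bound of Proposition~\ref{prop: s_snag_ef_final_obj_gap_bound} and then turn the resulting one-line estimate on $\mathbb{E}[F(x_{\mathrm{out}})-F(x_*)]$ into an iteration complexity by optimizing the step size. First I would observe that, with $\lambda_t=\frac{1}{2}\sqrt{\eta/\mu}$, $\alpha_t\approx\lambda\mu/2$, $\beta_t\approx\lambda\mu$ and $\gamma=\Theta(k/d)$, one has $\eta_t^2+(\alpha_t^2/\gamma^2)\lambda_t^2=\Theta(\eta^2 d^2/k^2)$, so Proposition~\ref{prop: s_snag_ef_m_t_bound} reads $\mathbb{E}\|m_t\|^2\lesssim \frac{\eta^2 d^3}{k^3 P}\sum_{t'=1}^{t}(1-\gamma)^{t-t'}(\mathcal{V}+\mathbb{E}\|\nabla F(x_{t'-1})\|^2)$. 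Substituting this into the two compression contributions of Proposition~\ref{prop: s_snag_ef_final_obj_gap_bound}, i.e.\ $\sum_{t}(1-\sqrt{\eta\mu})^{T-t}\lambda L^2\mathbb{E}\|m_{t-1}\|^2$ and $L\mathbb{E}\|m_{T-1}\|^2$, interchanging the order of summation, and using $\gamma>\sqrt{\eta\mu}$ (to be checked below), I would bound the inner geometric series by $\sum_{t\ge t'}(1-\sqrt{\eta\mu})^{T-t}(1-\gamma)^{t-t'}\lesssim \frac{1}{\gamma}(1-\sqrt{\eta\mu})^{T-t'}$ and $\sum_{t'}(1-\sqrt{\eta\mu})^{T-t'}\lesssim 1/\sqrt{\eta\mu}$.

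The crucial step is the absorption argument. After these manipulations the gradient-norm part of the compression error carries, per index $t'$, a coefficient of order $\lambda L^2\cdot\frac{\eta^2 d^3}{k^3 P}\cdot\frac{1}{\gamma}$ (from the running sum) together with one of order $L\cdot\frac{\eta^2 d^3}{k^3 P}$ (from $L\mathbb{E}\|m_{T-1}\|^2$, using $(1-\gamma)^{T-1-t'}\lesssim(1-\sqrt{\eta\mu})^{T-t'}$), in front of $(1-\sqrt{\eta\mu})^{T-t'}\mathbb{E}\|\nabla F(x_{t'-1})\|^2$; this must be dominated by the negative terms $-\eta(1-\sqrt{\eta\mu})^{T-t'}\mathbb{E}\|\nabla F(x_{t'-1})\|^2$ already present in Proposition~\ref{prop: s_snag_ef_final_obj_gap_bound}. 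This is possible exactly when $\eta^{3/2}\lesssim k^4 P\sqrt{\mu}/(L^2 d^4)$ and $\eta\lesssim k^3 P/(L d^3)$, and I would then discard the now non-positive gradient terms. What is left has the shape $\mathbb{E}[F(x_{\mathrm{out}})-F(x_*)]\lesssim \mu(1-\sqrt{\eta\mu})^{T}\|x_0-x_*\|^2+\sqrt{\eta/\mu}\,\mathcal{V}/P+L^2\frac{\eta^2 d^4}{k^4 P\mu}\mathcal{V}$, the last being the $\mathcal{V}$-part of the compression error (the $\mathcal{V}$-part of $L\mathbb{E}\|m_{T-1}\|^2$ being of lower order).

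Finally I would require that each surviving term be at most $\varepsilon$. The first forces $T\gtrsim \frac{1}{\sqrt{\eta\mu}}\log(\mu\|x_0-x_*\|^2/\varepsilon)$; the second forces $\eta\lesssim \mu\varepsilon^2 P^2/\mathcal{V}^2$; the third forces $\eta\lesssim \varepsilon^{1/2} k^2 P^{1/2}\mu^{1/2}/(L d^2 \mathcal{V}^{1/2})$. On top of these, $\eta\le 1/(2L)$ is imposed by Proposition~\ref{prop: s_snag_ef_final_obj_gap_bound}, and the hypothesis $\beta_t\le\Theta(\gamma^3/\alpha_t^2)$ of Proposition~\ref{prop: s_snag_ef_m_t_bound} translates (via $\beta\approx\lambda\mu$, $\alpha\approx\lambda\mu/2$, $\gamma=\Theta(k/d)$) into $\eta\lesssim k^2/(d^2\mu)$, which also guarantees $\gamma>\sqrt{\eta\mu}$ used earlier. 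Taking $\eta$ to be the largest value meeting all six upper bounds and $T=\widetilde{\Theta}(1/\sqrt{\eta\mu})$ (running Proposition~\ref{prop: s_snag_ef_final_obj_gap_bound} with horizon $T+1$ to match $x_{\mathrm{out}}=x_T$), and using that the reciprocal square root of a minimum is, up to a constant, the sum of the reciprocal square roots, gives exactly the six summands of \eqref{iter_comp: s_snag_ef}: $\eta\le 1/(2L)$ yields $\sqrt{L/\mu}$, the variance bound yields $\mathcal{V}/(P\mu\varepsilon)$, the $\beta$-constraint yields $d/k$, and the three compression constraints yield the remaining terms $\frac{d^{3/2}}{k^{3/2}\sqrt{P}}\sqrt{L/\mu}$, $\frac{d^{4/3}}{k^{4/3}P^{1/3}}\frac{L^{2/3}}{\mu^{2/3}}$ and $\frac{d}{kP^{1/4}}\frac{(L^2\mathcal{V})^{1/4}}{\mu^{3/4}\varepsilon^{1/4}}$. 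The step I expect to be the main obstacle is this absorption/bookkeeping: one must carefully track the interaction of the $(1-\gamma)$- and $(1-\sqrt{\eta\mu})$-weighted sums for both the running sum and the terminal term $L\mathbb{E}\|m_{T-1}\|^2$, and close the loop by checking that the final $\eta$ indeed satisfies $\gamma>\sqrt{\eta\mu}$ with a constant margin (which is why $\gamma=\Theta(k/d)$ must be taken ``sufficiently small'' in the statement).
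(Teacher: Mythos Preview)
Your proposal is correct and follows essentially the same route as the paper's proof: you substitute Proposition~\ref{prop: s_snag_ef_m_t_bound} into Proposition~\ref{prop: s_snag_ef_final_obj_gap_bound}, use the geometric-sum swap (the paper's Lemma~\ref{lem: s_sgd_ef_geometric_sum}, which requires exactly your condition $\gamma\gtrsim\sqrt{\eta\mu}$), absorb the gradient-norm compression terms via the constraints $\eta\lesssim\gamma^3P/L$ and $\eta^{3/2}\lesssim\gamma^4P\sqrt{\mu}/L^2$, observe that the terminal $\mathcal V$-contribution is dominated because $\lambda L/\alpha\ge 1$, and then read off the six complexity terms from $T=\widetilde\Theta(1/\sqrt{\eta\mu})$ under the six upper bounds on $\eta$. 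Your accounting of the constraints and the resulting summands matches the paper's exactly (after substituting $\gamma=\Theta(k/d)$ and $\lambda/\alpha=\Theta(1/\mu)$).
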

\begin{rem}
In contrast, non-accelerated S-SGD-EF only achieves
\begin{align*}
     \widetilde{O}\left( \frac{L}{\mu} + \frac{\mathcal{V}}{P}\frac{1}{\mu\varepsilon}  +  \frac{d}{k} + \frac{d}{k\sqrt{P}}\left(\frac{L}{\mu} + \frac{\sqrt{L\mathcal{V}}}{\mu\sqrt{\varepsilon}}\right) \right).
\end{align*}
\end{rem}
{\bf{Asymptotic View: Same Rate as Non-compressed SGD}}\ \ \ As $\varepsilon \to 0$, the asymptotic rate of S-SNAG-EF becomes $1/(\mu\varepsilon)$, that is the convergence rate of the vanilla SGD. Several previous methods  and S-SGD-EF also possess this property. \par
{\bf{Non-asymptotic View: Less Compression Error than Non-accelerated Methods}} \ \ \ The compression error (third to last terms in (\ref{iter_comp: s_snag_ef}) has a better dependency on $\varepsilon$ than the one of S-SGD-EF. As a result, the necessary number of communicated components $k$ to maintain the rate of vanilla SGD $1/(\mu\varepsilon)$ is $(\widetilde O(P^{1/3}\mu^{1/3}\varepsilon^{3/4}+P^{3/4}\mu^{1/4}\varepsilon^{3/4}) \wedge 1)d$, which can be much better than the one of S-SGD-EF $(O(P^{1/2}\sqrt\varepsilon)\wedge 1 )d$ for moderate $\varepsilon > 0$. Here, for simple comparison, we assume that $\varepsilon \leq 1/P$.\par

\section{Extension to Nonconvex Cases}
In this section, we briefly discuss an extension of S-SNAG-EF (Algorithm \ref{alg: s_snag_ef}) to nonconvex cases. Unfortunately, Algorithm \ref{alg: s_snag_ef} has no theoretical guarantee for nonconvex cases generally. Hence We adopt the standard recursive regularization scheme to resolve this problem. Specifically, Algorithm \ref{alg: reg_s_snag_ef} repeatedly minimize the "regularized" objective $F + \sigma Q(x_{t-1}^{++})$ by using S-SNAG-EF, where $Q(x_{t-1}^{++})(x) = \|x - x_{t-1}^{++}\|^2$ and $x_{t-1}^{++}$ is the current solution. This means that the objective function is convexified by $L_2$-regularization around the current solution and the regularized objective is minimized by S-SNAG-EF at each iteration. We call this algorithm Reg-S-SNAG-EF (Algorithm \ref{alg: reg_s_snag_ef}). 

\begin{algorithm}[H]
\label{alg: reg_s_snag_ef}
\caption{Reg-S-SNAG-EF ($F$, $x_{\mathrm{in}}$, $\{\eta_t, \lambda_t, \alpha_t, \beta_t \}_{t=1}^{\infty}$, $\gamma$, $k$, $T$, $\sigma$, $S$)}
\begin{algorithmic}
\STATE Set: $x_0 = x_{\mathrm{in}}$.
\FOR {$s=1$ to $S$}
\STATE Run: $x_s = $ S-SNAG-EF($F + \sigma Q(x_{s-1})$, $x_{s-1}$, $\{\eta_t, \lambda_t, \alpha_t, \beta_t \}_{t=1}^{\infty}$, $\gamma$, $k$, $T$)
\ENDFOR
\ENSURE $x_{\hat s}$.
\end{algorithmic}
\end{algorithm}

\begin{thm}[General nonconvex]\label{thm: reg_s_snag_ef_general_nonconvex}
Suppose that Assumptions \ref{assump: sol_existence}, \ref{assump: smoothness} and \ref{assump: bounded_variance} hold. Let $\sigma = L$, and $\lambda_t$, $\alpha_t$, $\beta_t$ and $\gamma$ be the same ones in Theorem \ref{thm: s_snag_ef_strongly_convex} (with $\mu \leftarrow \sigma$), and $T = \widetilde\Theta(1/\sqrt{\eta L})$ and $S = \Theta(1+L\Delta/\varepsilon)$ be sufficiently large. Then the iteration complexity $ST$ of Reg-S-SNAG-EF with appropriate $\eta_t = \eta$ for achieving $\mathbb{E}\|\nabla F(x_{\mathrm{out}})\|^2 \leq \varepsilon$ is
\begin{align*}
    \widetilde O&\left( \frac{L\Delta}{\varepsilon} + \frac{\mathcal{V}}{P}\frac{L\Delta}{\varepsilon^2} + \left(\frac{d}{k} + \frac{d^{\frac{3}{2}}}{k^{\frac{3}{2}}\sqrt{P}} + \frac{d^{\frac{4}{3}}}{k^{\frac{4}{3}}P^{\frac{1}{3}}}\right)\frac{L\Delta}{\varepsilon} + \frac{d}{kP^{\frac{1}{4}}}\frac{L\mathcal{V}^{\frac{1}{4}}\Delta}{\varepsilon^{\frac{5}{4}}}\right),
\end{align*}
where $x_{\mathrm{out}} = x_{\hat s}$ and $\hat s \sim [S]$ according to $\{1/S\}_{s=1}^S$.
\end{thm}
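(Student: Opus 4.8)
The plan is to use the recursive (proximal-point-type) regularization of Algorithm \ref{alg: reg_s_snag_ef} to reduce the nonconvex problem to a sequence of $S$ well-conditioned strongly convex subproblems, each solved by S-SNAG-EF, and then combine Theorem \ref{thm: s_snag_ef_strongly_convex} with a standard ``one prox step'' gradient bound. First I would record the elementary facts about the surrogate $\hat F_s := F + \sigma Q(x_{s-1})$ with $\sigma = L$: from Assumption \ref{assump: smoothness} one gets $F(y) \ge F(x) + \langle \nabla F(x), y-x\rangle - \tfrac{L}{2}\|y-x\|^2$, so adding $\sigma\|\cdot - x_{s-1}\|^2$ makes $\hat F_s$ $L$-strongly convex and (by the triangle inequality on gradients) $3L$-smooth; moreover the stochastic gradients $\nabla f_{i,p}(x) + 2\sigma(x - x_{s-1})$ of $\hat F_s$ have the same $\mathcal V$-bounded variance as in Assumption \ref{assump: bounded_variance}, since the added term is deterministic. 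Hence each inner call in line~3 is a run of S-SNAG-EF on a strongly convex objective with $\mu \leftarrow \sigma = L$ and condition number $\Theta(1)$, and Theorem \ref{thm: s_snag_ef_strongly_convex} tells us how large $T$ must be so that the inner output $x_s$ satisfies $\mathbb{E}[\hat F_s(x_s) - \hat F_s(x_s^\ast)] \le \varepsilon'$, where $x_s^\ast := \arg\min \hat F_s$ and $\varepsilon'$ is a target accuracy to be chosen; substituting $\mu = L$ into \eqref{iter_comp: s_snag_ef} this is $T = \widetilde O\big(\mathcal V/(P\varepsilon') + d/k + d^{3/2}/(k^{3/2}\sqrt P) + d^{4/3}/(k^{4/3}P^{1/3}) + (d/(kP^{1/4}))(\mathcal V/\varepsilon')^{1/4}\big)$.

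Next I would establish the conversion lemma from inner suboptimality to a gradient bound for $F$. Using the optimality condition $\nabla F(x_s^\ast) = -2\sigma(x_s^\ast - x_{s-1})$, the $L$-smoothness of $F$, and the $L$-strong convexity of $\hat F_s$ (which gives $\|x_s - x_s^\ast\|^2 \le \tfrac{2}{\sigma}(\hat F_s(x_s) - \hat F_s(x_s^\ast))$ and $\|x_{s-1} - x_s^\ast\|^2 \le \tfrac{2}{\sigma}(\hat F_s(x_{s-1}) - \hat F_s(x_s^\ast))$), together with $\hat F_s(x_{s-1}) = F(x_{s-1})$ and $\hat F_s(x_s) \ge F(x_s)$, I obtain
\[
\|\nabla F(x_s)\|^2 \le \Theta\!\left( L\big(F(x_{s-1}) - F(x_s)\big) + L\big(\hat F_s(x_s) - \hat F_s(x_s^\ast)\big) \right).
\]

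Then I take expectations, sum over $s = 1,\dots,S$, telescope $\sum_{s=1}^S \mathbb{E}[F(x_{s-1}) - F(x_s)] = F(x_0) - \mathbb{E}[F(x_S)] \le \Delta$ (using $F(x_S) \ge F(x_\ast)$), and divide by $S$; since $\hat s \sim \mathrm{Unif}[S]$ this yields $\mathbb{E}\|\nabla F(x_{\mathrm{out}})\|^2 = \tfrac1S\sum_{s=1}^S \mathbb{E}\|\nabla F(x_s)\|^2 \le \Theta(L\Delta/S + L\varepsilon')$. Choosing $S = \Theta(1 + L\Delta/\varepsilon)$ and $\varepsilon' = \Theta(\varepsilon/L)$ makes the right-hand side at most $\varepsilon$; plugging $\varepsilon' = \Theta(\varepsilon/L)$ into the per-run bound for $T$, multiplying by $S$, and simplifying the powers of $L$ (e.g. $(\mathcal V/\varepsilon')^{1/4} = (L\mathcal V/\varepsilon)^{1/4}$, and the stochastic term $S\cdot \mathcal V/(P\varepsilon') = \tfrac{L\Delta}{\varepsilon}\cdot\tfrac{\mathcal V}{P}\cdot\tfrac{L}{L\varepsilon}\cdots$) reproduces exactly the claimed iteration complexity $ST$.

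The main obstacle is the bookkeeping around randomness and the single ``sufficiently large'' $T$: Theorem \ref{thm: s_snag_ef_strongly_convex} is stated with a deterministic initial point, whereas here the center $x_{s-1}$ is a random point produced by previous rounds, so I must apply that theorem conditionally on $x_{s-1}$ and compose the conditional bounds via the tower property, and I must certify that one value of $T$ works for all rounds. The latter follows because the initial distance of the inner run is controlled, $\|x_{s-1} - x_s^\ast\|^2 \le \tfrac{2}{\sigma}(F(x_{s-1}) - F(x_\ast))$, and the one-prox-step inequality also gives $\mathbb{E}[F(x_s)] \le \mathbb{E}[F(x_{s-1})] + \varepsilon'$, hence $\mathbb{E}[F(x_{s-1})] - F(x_\ast) \le \Delta + S\varepsilon' = O(\Delta)$ uniformly in $s$; since for strongly convex objectives the initial distance enters \eqref{iter_comp: s_snag_ef} only through the suppressed logarithmic factors, it is absorbed into the $\widetilde O$ and does not affect the stated rate.
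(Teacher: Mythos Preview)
Your proposal is correct and follows essentially the same approach as the paper: verify that $\hat F_s = F + L\,Q(x_{s-1})$ is $L$-strongly convex, $3L$-smooth, with unchanged variance; invoke Theorem~\ref{thm: s_snag_ef_strongly_convex} on each inner run with $\mu\leftarrow L$ and accuracy $\Theta(\varepsilon/L)$; derive a one-step bound of the form $\mathbb{E}\|\nabla F(x_s)\|^2 \le \Theta\bigl(L\,\mathbb{E}[F(x_{s-1})-F(x_s)] + \varepsilon\bigr)$, telescope over $s$, and multiply the inner iteration count by $S=\Theta(1+L\Delta/\varepsilon)$. The only cosmetic difference is that the paper obtains the key inequality by the direct identity $F(x_s)=\hat F_s(x_s)-L\|x_s-x_{s-1}\|^2$ together with $\hat F_s(x_s)\le \min\hat F_s + \varepsilon' \le F(x_{s-1})+\varepsilon'$, whereas you detour through $x_s^\ast$ and strong convexity; both yield the same telescoping bound, and your extra remarks on conditioning and on the initial distance entering only through the hidden log factors are correct refinements that the paper leaves implicit.
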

\begin{rem}
In contrast, S-SGD-EF only achieves 
\begin{align*}
     O\left( \frac{L\Delta}{\varepsilon} + \frac{\mathcal{V}}{P}\frac{L\Delta}{\varepsilon^2} + \frac{d}{k} + \frac{d}{k\sqrt{P}} \left(\frac{L\Delta}{\varepsilon} + \frac{L\sqrt{\mathcal{V}}\Delta}{\varepsilon^{\frac{3}{2}}}\right)\right).
\end{align*}
From Theorem \ref{thm: reg_s_snag_ef_general_nonconvex}, we can see that even in nonconvex cases, acceleration can be beneficial. Indeed, the compression error terms (third and fourth terms) have a better dependence on $\varepsilon$ than S-SGD-EF.
\end{rem}

\section{Numerical Experiments}
In this section, we provide numerical experiments to demonstrate the performances of our methods. \par 
{\bf{Experimental settings}}\ \ \ We conducted standard $L_2$-regularized logistic regression for multi-class classification on publicly available CIFAR 10 dataset\footnotemark. \footnotetext{\url{https://www.cs.toronto.edu/~kriz/cifar.html}.} The regularization parameter was set to be $10^{-4}$. We normalized each channel of images to be mean and standard deviation $0.5$.
We compared our proposed S-SNAG-EF with non-compressed SGD, sparsified SGD without error feedback, top-$k$ sparsified SGD with error feedback and S-SGD-EF. We implemented the all algorithms on pseudo distribution settings in single node. In our experiments, the number of processors $P$ ranged in $\{10, 100\}$ and the compression ratio $k/d$ did in $\{0.01, 0.1\}$. We fairly tuned the all hyper parameters\footnotemark. \footnotetext{For non-compressed SGD, sparsified SGD, top-$k$ SGD with error feedback and S-SGD-EF, we only tuned learning rate $eta$. $eta$ ranged in $\{10^{-i} \mid i \in \{1, \ldots, 6\}$. For S-SNAG-EF, we additionally tunded strong convexity parameter $\mu in \{10^{-i} \mid i \in \{1, \ldots, 4\}$.}We independently ran each experiment four times and report the mean and standard deviation of train and test loss and accuracy against the number of iterations. \par
\begin{figure*}[t]
\begin{subfigmatrix}{4}
\subfigure[$P=10$, $k/d=0.01$ ]{\includegraphics[width=3.4cm]{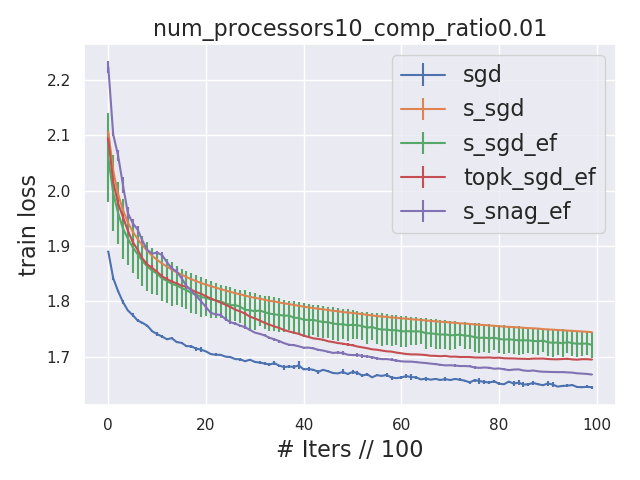}}
\subfigure[$P=10$, $k/d=0.01$ ]{\includegraphics[width=3.4cm]{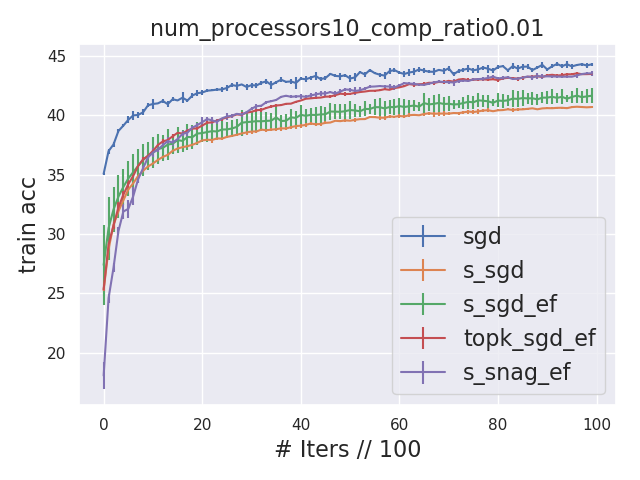}}
\subfigure[$P=10$, $k/d=0.01$]{\includegraphics[width=3.4cm]{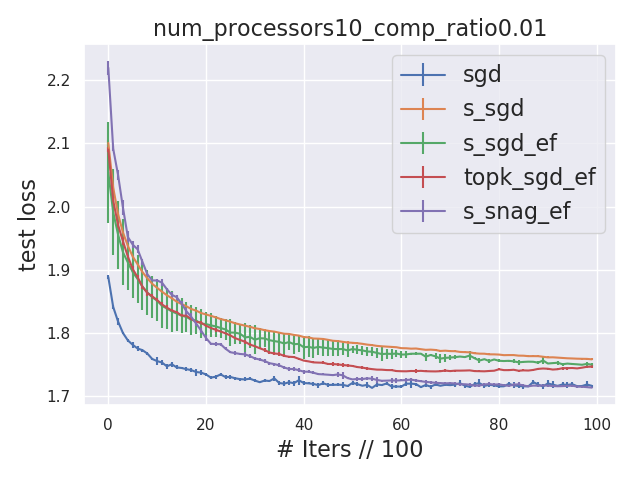}}
\subfigure[$P=10$, $k/d=0.01$
]{\includegraphics[width=3.4cm]{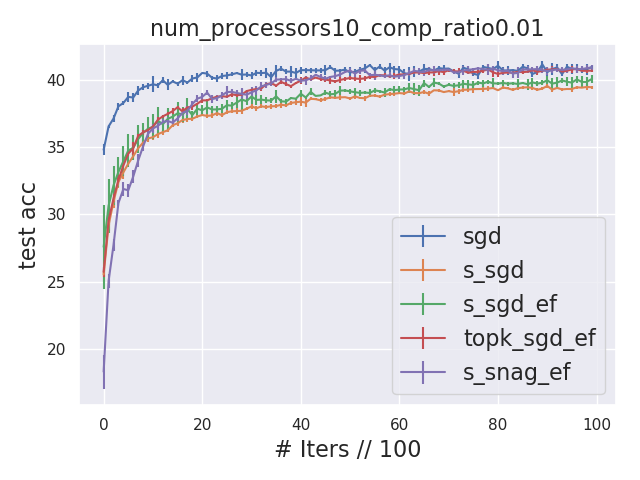}}
\end{subfigmatrix}
\begin{subfigmatrix}{4}
\subfigure[$P=10$, $k/d=0.1$]{\includegraphics[width=3.4cm]{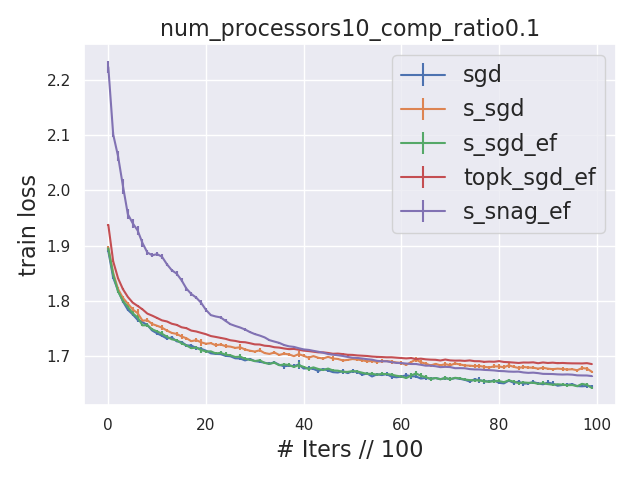}}
\subfigure[$P=10$, $k/d=0.1$]{\includegraphics[width=3.4cm]{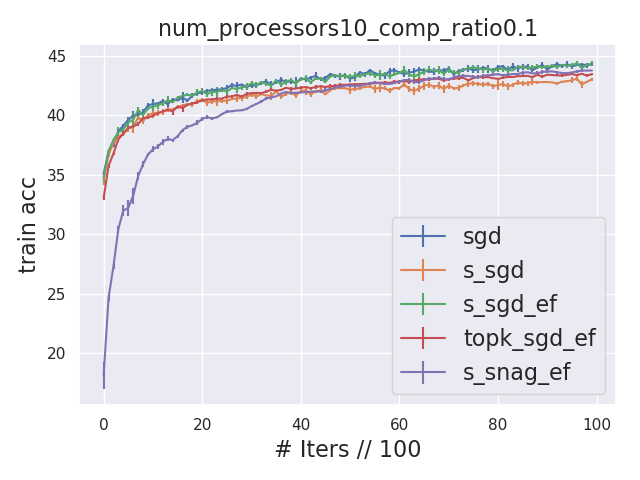}}
\subfigure[$P=10$, $k/d=0.1$]{\includegraphics[width=3.4cm]{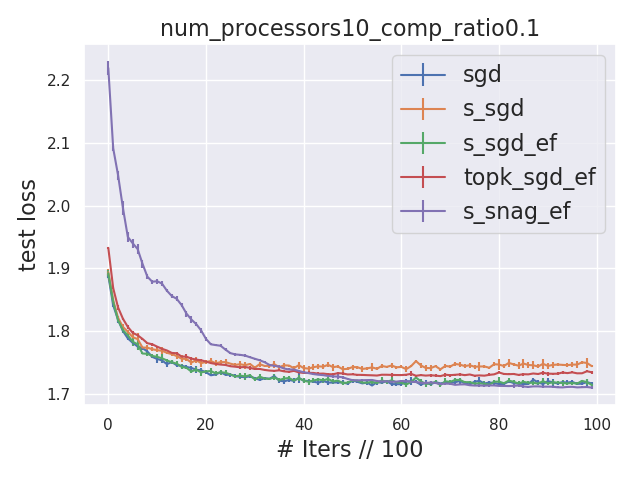}}
\subfigure[$P=10$, $k/d=0.1$
]{\includegraphics[width=3.4cm]{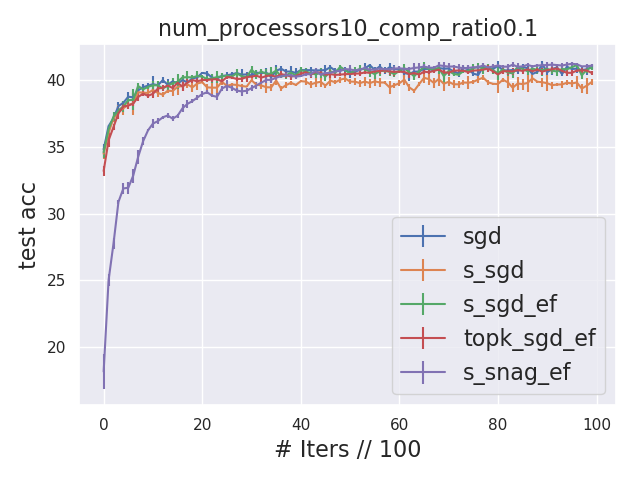}}
\end{subfigmatrix}
\begin{subfigmatrix}{4}
\subfigure[$P=100$, $k/d=0.01$]{\includegraphics[width=3.4cm]{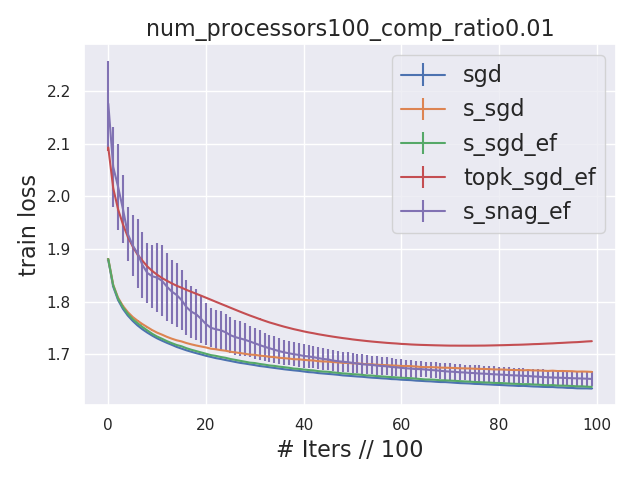}}
\subfigure[$P=100$, $k/d=0.01$]{\includegraphics[width=3.4cm]{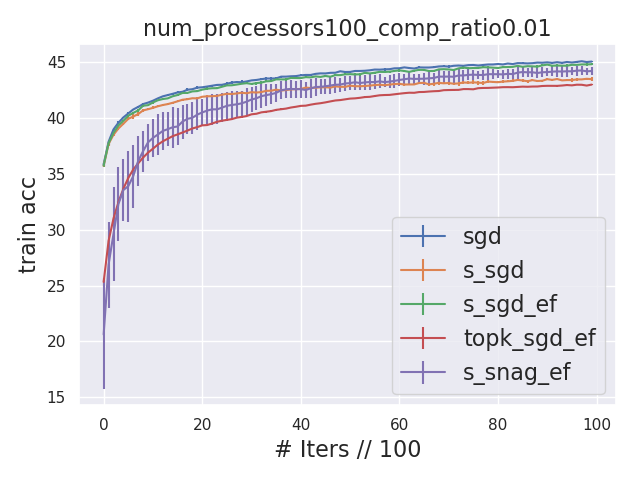}}
\subfigure[$P=100$, $k/d=0.01$]{\includegraphics[width=3.4cm]{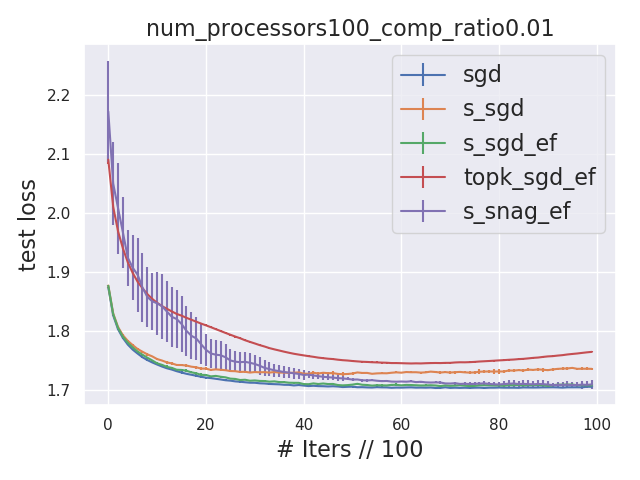}}
\subfigure[$P=100$, $k/d=0.01$
]{\includegraphics[width=3.4cm]{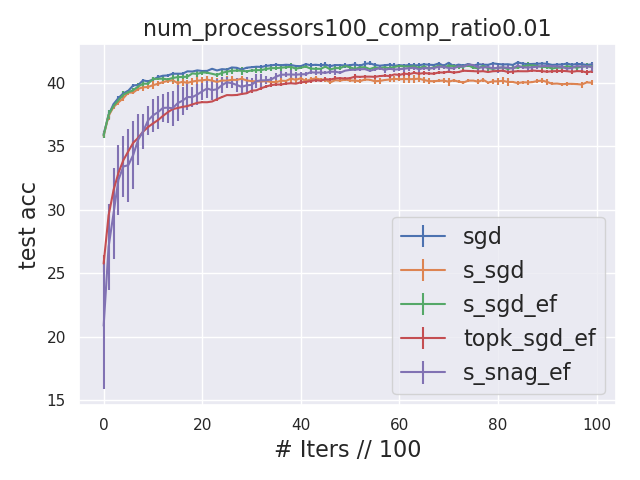}}
\end{subfigmatrix}
\begin{subfigmatrix}{4}
\subfigure[$P=100$, $k/d=0.1$]{\includegraphics[width=3.4cm]{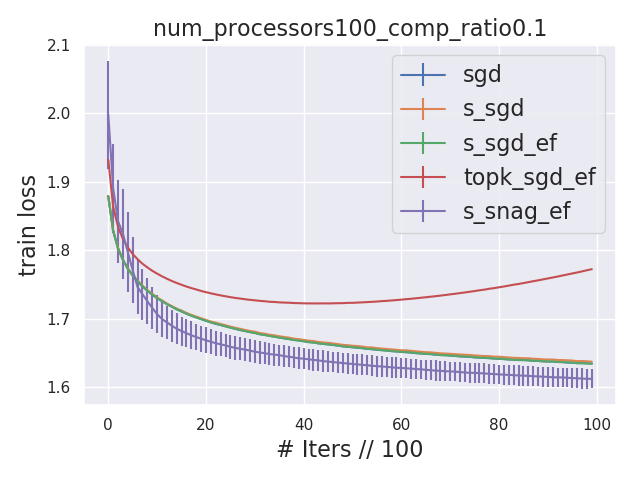}}
\subfigure[$P=100$, $k/d=0.1$]{\includegraphics[width=3.4cm]{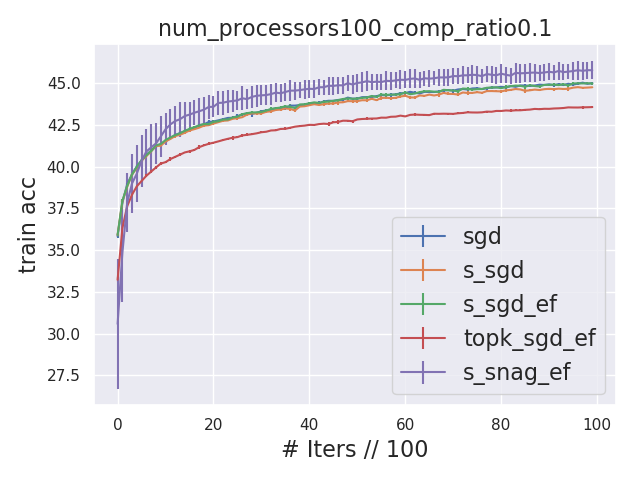}}
\subfigure[$P=100$, $k/d=0.1$]{\includegraphics[width=3.4cm]{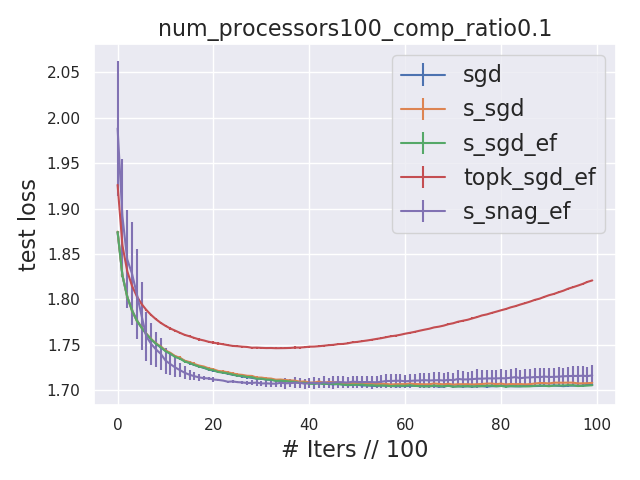}}
\subfigure[$P=100$, $k/d=0.1$]{\includegraphics[width=3.4cm]{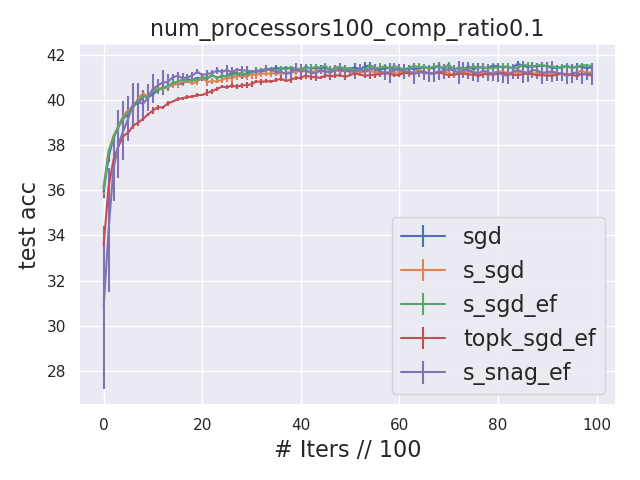}}
\end{subfigmatrix}
\caption{Comparisons of our methods with existing methods on $L_2$-regularized logistic regression tasks on CIFAR 10 for different number of processors $P$ and gradient compression ratio $k/d$. The first, second, third and last columns depict the comparison of train loss, train accuracy, test loss and test accuracy respectively on each $P$ and $k/d$ setting. }
\label{fig: comparison}
\end{figure*}
{\bf{Results}}\ \ \ Figure \ref{fig: comparison} shows the comparisons of our proposed S-SNAG-EF with previous methods and S-SGD-EF. When the cases $(P, k/d)=(10, 0.01), (100, 0.1)$, S-SNAG-EF significantly outperformed the other method except non-compressed SGD. When the cases $(P, k/d)=(10, 0.1), (100, 0.01)$, S-SGD-EF showed the best performances except vanilla SGD. The convergence of S-SNAG-EF was initially slow and this is perhaps a reason why S-SNAG-EF was outperformed by S-SNAG-EF in some cases. The performances of Top-$k$ SGD-EF were unstable and did not converge particularly for large $P$ . 
\section{Conclusion}
In this paper, we considered an accelerated sparsified SGD with error feedback (S-SNAG-EF) in parallel computing settings. We gave theoretical analysis of S-SNAG-EF and showed that our proposed algorithm achieves (i) asymptotical linear speed up with respect to the number of nodes; (ii) lower communication cost for maintaining the rate of vanilla SGD than non-accelerated methods thanks to Nesterov's acceleration. We also gave better analysis of non-accelerated S-SGD-EF than previous work by fully utilizing the unbiasedness of sparsification. In numerical experiments, we compared our methods with several previous methods and our methods showed comparable or better performances.


\section*{Acknowledgement}
TS was partially supported by JSPS KAKENHI (18K19793, 18H03201, and 20H00576), Japan DigitalDesign, and JST CREST.

\bibliography{acc_s_sgd_ef}
\bibliographystyle{abbrvnat}

\appendix 

\section{Analysis of S-SGD-EF}
\label{sec: analysis_s_sgd_ef}
\subsection{Analysis of $\mathbb{E}\|m_t\|^2$}
\begin{lem}\label{lem: s_sgd_ef_cum_error_equality}
\begin{align*}
    \mathbb{E}\|m_t\|^2 = (1-\gamma)^2\|m_{t-1}\|^2 + \eta_t^2\frac{1}{P^2}\sum_{p=1}^P\mathbb{E}\|\nabla f_{i_{t, p}}(x_{t-1}) + (\gamma/\eta_t) m_{t-1, p} - \bar g_{t, p}\|^2.,
\end{align*}
where the expectations are taken with respect to $J_{t, 1}, \ldots, J_{t, P} (\subset [d])$, which are the random choices of the coordinates for constructing $\bar g_{t, p}$ conditioned on $\{i_{t', p}\mid t' \in [t], p \in [P]\}$.
\end{lem}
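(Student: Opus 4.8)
The plan is to reduce the statement to a one-step recursion for each per-node memory $m_{t,p}$, then exploit (i) the unbiasedness of $\mathrm{RandComp}$ and (ii) the independence of the coordinate subsets across processors to kill all cross terms after squaring.

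First I would unfold the update rules of Algorithm \ref{alg: s_sgd_ef}. Line 5 gives $\eta_t g_{t,p} = \eta_t \nabla f_{i_{t,p}}(x_{t-1}) + \gamma m_{t-1,p}$, and line 7 gives $m_{t,p} = m_{t-1,p} + \eta_t(\nabla f_{i_{t,p}}(x_{t-1}) - \bar g_{t,p})$. Substituting the former into the latter produces the clean recursion
\[
m_{t,p} = (1-\gamma)\, m_{t-1,p} + \eta_t\bigl(g_{t,p} - \bar g_{t,p}\bigr),
\]
and averaging over $p$ yields $m_t = (1-\gamma)\, m_{t-1} + \tfrac{\eta_t}{P}\sum_{p=1}^P (g_{t,p} - \bar g_{t,p})$. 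The key structural facts I would then record are: $\mathbb{E}_{J_{t,p}}[\bar g_{t,p}] = g_{t,p}$ since $\mathrm{RandComp}$ is unbiased (so $\mathbb{E}_{J_{t,p}}[g_{t,p}-\bar g_{t,p}] = 0$), where the expectation is conditional on all gradient indices $\{i_{t',p}\}$ so that $m_{t-1,p}$ and $g_{t,p}$ are deterministic; and $J_{t,1},\dots,J_{t,P}$ are mutually independent because each processor draws its subset independently.

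Next I would expand $\|m_t\|^2$ using the recursion. The cross term $2(1-\gamma)\tfrac{\eta_t}{P}\langle m_{t-1},\ \sum_p (g_{t,p}-\bar g_{t,p})\rangle$ vanishes in expectation because $m_{t-1}$ is measurable with respect to the conditioning while each summand is mean zero. For the quadratic term, I would write $\bigl\|\sum_p (g_{t,p}-\bar g_{t,p})\bigr\|^2 = \sum_p \|g_{t,p}-\bar g_{t,p}\|^2 + \sum_{p\neq q}\langle g_{t,p}-\bar g_{t,p},\, g_{t,q}-\bar g_{t,q}\rangle$; by independence of $J_{t,p}$ and $J_{t,q}$ and mean-zeroness, every off-diagonal term has zero expectation. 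Collecting what survives gives $\mathbb{E}\|m_t\|^2 = (1-\gamma)^2\|m_{t-1}\|^2 + \tfrac{\eta_t^2}{P^2}\sum_p \mathbb{E}\|g_{t,p}-\bar g_{t,p}\|^2$, and substituting $g_{t,p}-\bar g_{t,p} = \nabla f_{i_{t,p}}(x_{t-1}) + (\gamma/\eta_t) m_{t-1,p} - \bar g_{t,p}$ recovers the claimed identity exactly.

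There is no genuinely hard step here; the only thing demanding care is the bookkeeping of the conditioning — making explicit that the expectation is taken over the coordinate subsets $J_{t,1},\dots,J_{t,P}$ \emph{given} the stochastic-gradient indices, so that $m_{t-1}$, $m_{t-1,p}$ and the $g_{t,p}$ may be treated as constants — together with the invocation of independence of the $J_{t,p}$ across $p$ to annihilate the off-diagonal terms in the expanded square.
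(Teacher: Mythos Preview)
Your proposal is correct and follows essentially the same approach as the paper: both derive the decomposition $m_t = (1-\gamma)m_{t-1} + \tfrac{\eta_t}{P}\sum_p(g_{t,p}-\bar g_{t,p})$, invoke unbiasedness of $\mathrm{RandComp}$ to kill the cross term with $m_{t-1}$, and use independence of $J_{t,1},\dots,J_{t,P}$ to drop the off-diagonal pairings. Your presentation is slightly cleaner in that you first write the per-node recursion $m_{t,p}=(1-\gamma)m_{t-1,p}+\eta_t(g_{t,p}-\bar g_{t,p})$ and then average, whereas the paper works directly at the aggregate level, but the substance is identical.
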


\begin{proof}
First note that $m_{t, p} = m_{t-1, p} + \eta_t(\nabla f_{i_{t, p}}(x_{t-1}) - \bar g_{t, p}) = \sum_{t'=1}^t \eta_{t'}(\nabla f_{i_{t, p}}(x_{t-1}) - \bar g_{t, p})$ and  $m_t = m_{t-1} + \frac{1}{P}\sum_{p=1}^P \eta_{t'}(\nabla f_{i_{t, p}}(x_{t-1}) - \bar g_{t, p}) = \sum_{t'=1}^t \frac{1}{P}\sum_{p=1}^P \eta_{t'}(\nabla f_{i_{t', p}}(x_{t'-1}) - \bar g_{t', p}) = \frac{1}{P}\sum_{p=1}^Pm_{t, p}$. Since $x_{t} = x_{t-1} - \eta_t\bar g_t $, where $\bar g_t = \frac{1}{P}\sum_{p=1}^P \bar g_{t, p}$  and $\widetilde{x}_t = \widetilde{x}_{t-1} - \eta_t \frac{1}{P}\sum_{p=1}^P\nabla f_{i_{t, p}}(x_{t-1})$, we have 
\begin{align*}
    \mathbb{E}\|m_t\|^2 =&\ \mathbb{E}\|x_{t-1} - \widetilde{x}_{t-1}\|^2 \\
    =&\ \mathbb{E}\left\|m_{t-1} + \eta_t \frac{1}{P}\sum_{p=1}^P(\nabla f_{i_{t, p}}(x_{t-1}) - \bar g_{t, p} )\right\|^2 \\
    =&\ \mathbb{E}\left\|(1-\gamma)m_{t-1} + \eta_t\frac{1}{P}\sum_{p=1}^P\left(\nabla f_{i_{t, p}}(x_{t-1}) + (\gamma/\eta_t) m_{t-1, p} - \bar g_{t, p}\right)\right\|^2.
\end{align*}
Here the expectations are taken with respect to $J_{t, 1}, \ldots, J_{t, P} (\subset [d])$, which are the random choices of the coordinates for constructing $\bar g_{t, p}$ conditioned on $\{i_{t', p}\mid t' \in [t], p \in [P]\}$. Since each $\bar g_{t, p}$ is an independent unbiased estimator of $\nabla f_{i_{t, p}}(x_{t-1}) + (\gamma_t/\eta_t) m_{t-1, p}$ for $p \in [P]$, we have 
\begin{align*}
    \mathbb{E}\|m_t\|^2 
    =&\ \mathbb{E}\left\|(1-\gamma)m_{t-1} + \eta_t\frac{1}{P}\sum_{p=1}^P(\nabla f_{i_{t, p}}(x_{t-1}) + (\gamma/\eta_t) m_{t-1, p} - \bar g_{t, p})\right\|^2 \\
    =&\ (1-\gamma)^2 \|m_{t-1}\|^2 + \eta_t^2\mathbb{E}\left\|\frac{1}{P}\sum_{p=1}^P(\nabla f_{i_{t, p}}(x_{t-1}) + (\gamma/\eta_t) m_{t-1, p} - \bar g_{t, p})\right\|^2 \\
    =&\ (1-\gamma)^2\|m_{t-1}\|^2 + \eta_t^2\frac{1}{P^2}\sum_{p=1}^P\mathbb{E}\|\nabla f_{i_{t, p}}(x_{t-1}) + (\gamma/\eta_t) m_{t-1, p} - \bar g_{t, p}\|^2.
\end{align*}
The last equality is from the independence of $\bar g_{t, 1}, \ldots, \bar g_{t, P}$.
\end{proof}

Now we need to bound the variance term $\mathbb{E}\|\nabla f_{i_{t, p}}(x_{t-1}) + (\gamma/\eta_t) m_{t-1, p} - \bar g_{t, p}\|^2$. 

\begin{lem}\label{lem: s_sgd_ef_variance_bound}
For $p \in [P]$,
\begin{align*}
    &\mathbb{E}\|\nabla f_{i_{t, p}}(x_{t-1}) + (\gamma/\eta_t) m_{t-1, p} - \bar g_{t, p}\|^2 \\
    \leq&\ \Theta\left(\frac{d}{k}\right)(\mathcal{V} + \|\nabla F(x_{t-1})\|^2) + \Theta\left(\frac{d\gamma^2}{k\eta_t^2}\right)\|m_{t-1, p}\|^2 
\end{align*}
\end{lem}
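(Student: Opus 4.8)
The plan is to bound the variance of the compressed gradient estimator $\bar g_{t,p} = \mathrm{RandComp}(g_{t,p}, k/2)$ around its target $g_{t,p} = \nabla f_{i_{t,p}}(x_{t-1}) + (\gamma/\eta_t) m_{t-1,p}$, and then control that target's squared norm in terms of the quantities appearing in the statement. I would first recall the elementary variance bound for the random sparsification operator: since $\mathrm{RandComp}(x,k)$ picks a uniformly random index set $J$ of size $k$ and rescales by $d/k$, it is unbiased and satisfies $\mathbb{E}\|\mathrm{RandComp}(x,k) - x\|^2 = (d/k - 1)\|x\|^2 \le (d/k)\|x\|^2$. Applying this with $k \leftarrow k/2$ gives $\mathbb{E}\|\bar g_{t,p} - g_{t,p}\|^2 \le \Theta(d/k)\|g_{t,p}\|^2$, so the whole problem reduces to bounding $\|g_{t,p}\|^2 = \|\nabla f_{i_{t,p}}(x_{t-1}) + (\gamma/\eta_t) m_{t-1,p}\|^2$.

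Next I would split $\|g_{t,p}\|^2$ using the inequality $\|a+b\|^2 \le 2\|a\|^2 + 2\|b\|^2$, yielding $\le 2\|\nabla f_{i_{t,p}}(x_{t-1})\|^2 + 2(\gamma^2/\eta_t^2)\|m_{t-1,p}\|^2$. The second piece is already in the desired form with the factor $\Theta(d\gamma^2/(k\eta_t^2))$ after multiplying back the $\Theta(d/k)$. For the first piece I would take expectation over the random data index $i_{t,p}$: using Assumption~\ref{assump: bounded_variance} (bounded variance of the $f_{i,p}$ around $F$) together with the fact that $\nabla F$ is the mean of $\nabla f_{i,p}$ over $i$, one gets $\mathbb{E}_{i}\|\nabla f_{i_{t,p}}(x_{t-1})\|^2 \le \|\nabla F(x_{t-1})\|^2 + \mathcal{V}$ (this is the bias–variance decomposition $\mathbb{E}\|X\|^2 = \|\mathbb{E}X\|^2 + \mathbb{E}\|X - \mathbb{E}X\|^2$, where the per-partition variance is bounded by $\mathcal{V}$). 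Combining, $\mathbb{E}\|g_{t,p}\|^2 \le \Theta(1)(\mathcal{V} + \|\nabla F(x_{t-1})\|^2) + \Theta(\gamma^2/\eta_t^2)\|m_{t-1,p}\|^2$, and multiplying by $\Theta(d/k)$ gives the claimed bound.

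One technical point I would be careful about is the order in which expectations are taken and what is conditioned on. In Lemma~\ref{lem: s_sgd_ef_cum_error_equality} the expectation over the sparsification randomness $J_{t,p}$ was conditioned on the data indices $\{i_{t',p}\}$; here the statement of Lemma~\ref{lem: s_sgd_ef_variance_bound} seemingly also averages over $i_{t,p}$, so I would make the conditioning explicit — condition on everything through iteration $t-1$ (so that $m_{t-1,p}$ and $x_{t-1}$ are fixed), and then take expectation over the fresh data draw $i_{t,p}$ and the independent coordinate set $J_{t,p}$. Because $\mathrm{RandComp}$ is applied to $g_{t,p}$ which depends on $i_{t,p}$, I would first take $\mathbb{E}_{J}$ (giving the $\Theta(d/k)\|g_{t,p}\|^2$ variance bound pointwise in $i_{t,p}$) and then take $\mathbb{E}_{i}$, using independence of $J$ from $i$. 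The main obstacle, such as it is, is just keeping the constants and the conditioning bookkeeping straight; the inequalities themselves (sparsification variance, $\|a+b\|^2 \le 2\|a\|^2+2\|b\|^2$, bias–variance split) are all elementary, so I expect the proof to be short once the conditioning is set up cleanly.
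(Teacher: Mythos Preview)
Your approach is essentially identical to the paper's: bound the sparsification variance by $\Theta(d/k)\|g_{t,p}\|^2$ and then split $\|g_{t,p}\|^2$ via $\|a+b\|^2 \le 2\|a\|^2 + 2\|b\|^2$. One minor slip: in S-SGD-EF the compression is $\mathrm{RandComp}(g_{t,p}, k)$, not $k/2$ (the $k/2$ is for S-SNAG-EF), though this only affects constants absorbed by the $\Theta$; in fact your explicit expectation over $i_{t,p}$ to pass from $\|\nabla f_{i_{t,p}}(x_{t-1})\|^2$ to $\mathcal{V}+\|\nabla F(x_{t-1})\|^2$ is a step the paper's written proof leaves implicit.
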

\begin{proof}
Remember that 
$$(\bar g_{t, p})_j 
= \begin{cases}
    \frac{d}{k} (\nabla_j f_{i_{t, p}}(x_{t-1}) + (\gamma/\eta_t) (m_{t-1, p})_j) & (j \in J_{t, p}) \\
    0 & (otherwise)
\end{cases}, $$
where $J_{t, p} = \left\{j_{t, p}^{(1)}, \ldots, j_{t, p}^{(k)}\right\}$ and each $j_{t, p}^{(k)}$ is i.i.d. to the uniform distribution on $[d]$. Since $\left(j_{t, p}^{(k)}\right)_{k=1}^k$ are i.i.d., we have
\begin{align*}
    &\mathbb{E}\|\nabla f_{i_{t, p}}(x_{t-1}) + (\gamma/\eta_t) m_{t-1, p} - \bar g_{t, p}\|^2 \\
    \leq&\ \sum_{j=1}^d  \left\{\frac{k}{d}\left(\left(\frac{d}{k}-1\right)^2 (\nabla_j f_{i_{t, p}}(x_{t-1}) + (\gamma/\eta_t) (m_{t-1, p})_j)^2\right) + \left( 1 - \frac{k}{d}\right)(\nabla_j f_{i_{t, p}}(x_{t-1}) + (\gamma/\eta_t) (m_{t-1, p})_j)^2\right\} \\
    \leq&\ \left(1 + \frac{d}{k}\right)\|\nabla f_{i_{t, p}}(x_{t-1}) + (\gamma/\eta_t) m_{t-1, p}\|^2 \\
    \leq&\ \frac{4d}{k}\|\nabla f_{i_{t, p}}(x_{t-1})\|^2 + \frac{4d\gamma^2}{k\eta_t^2}\|m_{t-1, p}\|^2.
\end{align*}
\end{proof}

\begin{lem}\label{lem: s_sgd_ef_cum_error_expectaion}
For $t \in [T]$ and $p_1 \neq p_2 \in [P]$, 
$$\mathbb{E}\langle m_{t, p_1}, m_{t, p_2}\rangle = 0.$$
Here the expectations are taken with respect to the all random variables.
\end{lem}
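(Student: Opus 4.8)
The plan is to prove the orthogonality of the cross terms $\mathbb{E}\langle m_{t,p_1}, m_{t,p_2}\rangle = 0$ by exploiting the conditional independence of the randomness used on different processors, together with the fact that the expected contribution of each processor's compression error (conditioned on the shared history of iterates) is zero. First I would recall from the proof of Lemma~\ref{lem: s_sgd_ef_cum_error_equality} the telescoped expression $m_{t,p} = \sum_{t'=1}^t \eta_{t'}\left(\nabla f_{i_{t',p}}(x_{t'-1}) - \bar g_{t',p}\right)$, and observe that by the unbiasedness of $\mathrm{RandComp}$ we have, conditioned on everything up to and including the stochastic gradient draws at iteration $t'$, that $\mathbb{E}_{J_{t',p}}\left[\nabla f_{i_{t',p}}(x_{t'-1}) - \bar g_{t',p}\right] = 0$ for each $p$, and moreover the choices $J_{t',1},\dots,J_{t',P}$ are mutually independent across processors given that history.

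The key step is then a careful conditioning argument. Since $m_{t,p_1}$ and $m_{t,p_2}$ are both measurable with respect to all the randomness up to iteration $t$ (the stochastic gradient indices $\{i_{t',p}\}$ and the coordinate subsets $\{J_{t',p}\}$), I would expand $\langle m_{t,p_1}, m_{t,p_2}\rangle$ and isolate, for each pair of time indices $(s, s')$, the term $\eta_s \eta_{s'}\langle \nabla f_{i_{s,p_1}}(x_{s-1}) - \bar g_{s,p_1},\ \nabla f_{i_{s',p_2}}(x_{s'-1}) - \bar g_{s',p_2}\rangle$. Without loss of generality take $s \geq s'$. I would condition on the sigma-algebra generated by all randomness strictly before iteration $s$ on all processors, together with the iterate $x_{s-1}$, together with the full randomness at iteration $s$ on processor $p_2$ (and all iterations $\le s'$ on $p_2$). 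Relative to that sigma-algebra, the vector $\nabla f_{i_{s',p_2}}(x_{s'-1}) - \bar g_{s',p_2}$ is fixed, while $\nabla f_{i_{s,p_1}}(x_{s-1}) - \bar g_{s,p_1}$ still has its internal randomness ($i_{s,p_1}$ and $J_{s,p_1}$) independent of that conditioning; taking the conditional expectation of the compression error over $J_{s,p_1}$ (and then over $i_{s,p_1}$, noting $x_{s-1}$ is already determined) gives zero, so the inner product vanishes in expectation. Here I must be slightly careful that when $s = s'$ the "strictly before iteration $s$" conditioning plus processor $p_2$'s iteration-$s$ randomness still leaves processor $p_1$'s iteration-$s$ randomness (both $i_{s,p_1}$ and $J_{s,p_1}$) genuinely independent, which holds because the stochastic gradients are drawn i.i.d.\ across processors and $\mathrm{RandComp}$ uses fresh independent coordinate subsets on each processor. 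Summing over all $(s,s')$ pairs then yields $\mathbb{E}\langle m_{t,p_1}, m_{t,p_2}\rangle = 0$.

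The main obstacle I anticipate is bookkeeping the filtration precisely: one needs a conditioning that simultaneously (i) fixes $x_{s-1}$ so that the gradient distribution is well-defined, (ii) fixes the $p_2$-side factor, and (iii) still leaves the $p_1$-side compression error's randomness intact and centered. The cleanest way to handle this is an induction on $t$, or equivalently to write $m_{t,p} = m_{t-1,p} + \eta_t(\nabla f_{i_{t,p}}(x_{t-1}) - \bar g_{t,p})$ and bilinearly expand $\mathbb{E}\langle m_{t,p_1}, m_{t,p_2}\rangle$ into $\mathbb{E}\langle m_{t-1,p_1}, m_{t-1,p_2}\rangle$ plus three cross terms, each of which vanishes: the two "mixed" terms $\mathbb{E}\langle m_{t-1,p_1}, \eta_t(\nabla f_{i_{t,p_2}}(x_{t-1}) - \bar g_{t,p_2})\rangle$ vanish because $x_{t-1}$ and $m_{t-1,p_1}$ are determined by the pre-$t$ randomness while the $p_2$ error at step $t$ is conditionally centered, and the "new-new" term $\eta_t^2 \mathbb{E}\langle \nabla f_{i_{t,p_1}}(x_{t-1}) - \bar g_{t,p_1},\ \nabla f_{i_{t,p_2}}(x_{t-1}) - \bar g_{t,p_2}\rangle$ vanishes by conditioning on $x_{t-1}$ and using the across-processor independence of $(i_{t,p_1}, J_{t,p_1})$ and $(i_{t,p_2}, J_{t,p_2})$ together with each factor being conditionally mean zero. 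The base case $t=0$ is trivial since $m_{0,p} = 0$. This inductive formulation makes the filtration argument essentially mechanical and is the version I would write up.
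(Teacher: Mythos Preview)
There is a genuine gap in your argument. Your central claim---that the per-step compression error $\nabla f_{i_{t,p}}(x_{t-1}) - \bar g_{t,p}$ is conditionally centered given the history and the index draw---is false, because you have forgotten the error-feedback correction. In Algorithm~\ref{alg: s_sgd_ef} the compressor is applied not to $\nabla f_{i_{t,p}}(x_{t-1})$ but to $g_{t,p} = \nabla f_{i_{t,p}}(x_{t-1}) + (\gamma/\eta_t) m_{t-1,p}$, and unbiasedness of $\mathrm{RandComp}$ gives $\mathbb{E}_{J_{t,p}}[\bar g_{t,p}] = g_{t,p}$. Hence
\[
\mathbb{E}_{J_{t,p}}\bigl[\nabla f_{i_{t,p}}(x_{t-1}) - \bar g_{t,p}\bigr] \;=\; -\frac{\gamma}{\eta_t}\, m_{t-1,p} \;\neq\; 0,
\]
or equivalently $\mathbb{E}_{J_{t,p}}[m_{t,p}] = (1-\gamma)m_{t-1,p}$. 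This breaks your double-sum argument over $(s,s')$ outright: after conditioning so that the $p_2$-factor is fixed, the $p_1$-factor does \emph{not} average to zero, and you are left with a nonvanishing term involving $m_{s-1,p_1}$.

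Your inductive version is salvageable, but not for the reason you state. If you compute the conditional means correctly, each of the three ``new'' terms in the bilinear expansion of $\langle m_{t,p_1}, m_{t,p_2}\rangle$ reduces to a constant times $\mathbb{E}\langle m_{t-1,p_1}, m_{t-1,p_2}\rangle$ (the two mixed terms give $-\gamma\,\mathbb{E}\langle m_{t-1,p_1}, m_{t-1,p_2}\rangle$, the new-new term gives $\gamma^2\,\mathbb{E}\langle m_{t-1,p_1}, m_{t-1,p_2}\rangle$), and then the induction hypothesis kills them. The paper does exactly this in one line: using $\mathbb{E}_{J_{t,p}}[m_{t,p}] = (1-\gamma)m_{t-1,p}$ and the independence of $J_{t,p_1}$ and $J_{t,p_2}$, it obtains the recursion $\mathbb{E}\langle m_{t,p_1}, m_{t,p_2}\rangle = (1-\gamma)^2\,\mathbb{E}\langle m_{t-1,p_1}, m_{t-1,p_2}\rangle$ and unrolls to the base case $m_{0,p}=0$. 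So the fix is to replace ``conditionally centered'' with the correct conditional mean $(1-\gamma)m_{t-1,p}$ and then invoke the induction hypothesis; the ``each cross term is individually zero by centering'' route does not work here.
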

\begin{proof}
Suppose that $\{i_{t, p_1}, i_{t, p_2}\}$ and $\{i_{t', p'} \mid t' \in [t-1], p' \in [P] \}$ is given. Since $\mathbb{E}_{J_{t, p_1}}[m_{t, p_1}] = (1-\gamma)m_{t-1, p_1}$, $\mathbb{E}_{J_{t, p_1}}[m_{t, p_2}] = (1-\gamma)m_{t-1, p_2}$ and $J_{t, p_1} and J_{t, p_2}$ are independent, we have
$$\mathbb{E}_{J_{t, p_1}, J_{t, p_2}}\langle m_{t, p_1}, m_{t, p_2}\rangle = (1-\gamma)^2\langle m_{t-1, p_1}, m_{t-1, p_2}\rangle.$$
This implies that 
$$\mathbb{E}\langle m_{t, p_1}, m_{t, p_2}\rangle = (1-\gamma)^2\mathbb{E}\langle m_{t-1, p_1}, m_{t-1, p_2}\rangle,$$
where the expectations are taken with respect to the all random variables. Using this equality recursively, we obtain
$$\mathbb{E}\langle m_{t, p_1}, m_{t, p_2}\rangle = (1-\gamma)^{2(t-1)}\mathbb{E}\langle m_{0, p_1}, m_{0, p_2}\rangle = 0.$$
Here the last equality holds because $m_{0, p} = 0$ for $p \in [P]$.
\end{proof}

\begin{prop}\label{prop: s_sgd_ef_cum_error_recursive_bound}
Let $\gamma = \Theta(k/d)$ be sufficiently small. Then it follows that
\begin{align*}
    \mathbb{E}\|m_t\|^2 \leq (1-\gamma)\|m_{t-1}\|^2 + \Theta\left(\frac{\eta_t^2d}{kP}(\mathcal{V} + \|\nabla F(x_{t-1})\|^2)\right).
\end{align*}
\end{prop}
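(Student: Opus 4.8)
The plan is to chain the three preceding lemmas and then absorb one leftover term using the smallness of $\gamma$; throughout, the inequality is understood with expectations on both sides. First I would start from the exact identity of Lemma~\ref{lem: s_sgd_ef_cum_error_equality},
\begin{align*}
\mathbb{E}\|m_t\|^2 = (1-\gamma)^2\|m_{t-1}\|^2 + \frac{\eta_t^2}{P^2}\sum_{p=1}^P \mathbb{E}\big\|\nabla f_{i_{t,p}}(x_{t-1}) + (\gamma/\eta_t)m_{t-1,p} - \bar g_{t,p}\big\|^2,
\end{align*}
and bound each per-node variance term via Lemma~\ref{lem: s_sgd_ef_variance_bound}, taking the expectation over the sample index $i_{t,p}$ and invoking Assumption~\ref{assump: bounded_variance} to replace $\mathbb{E}\|\nabla f_{i_{t,p}}(x_{t-1})\|^2$ by $\mathcal{V}+\|\nabla F(x_{t-1})\|^2$. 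After multiplying by $\eta_t^2/P^2$ and summing over $p$, the ``signal'' part contributes $\Theta(\eta_t^2 d/(kP))(\mathcal{V}+\|\nabla F(x_{t-1})\|^2)$ --- the factor $1/P$ coming from $P$ summands divided by $P^2$ --- while the error-feedback part contributes $\Theta(d\gamma^2/(kP^2))\sum_{p=1}^P\|m_{t-1,p}\|^2$.

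The crux is to control $\sum_{p=1}^P\|m_{t-1,p}\|^2$ without paying an extra factor $P$. Here I would take the full expectation and apply Lemma~\ref{lem: s_sgd_ef_cum_error_expectaion}: since $\mathbb{E}\langle m_{t-1,p_1},m_{t-1,p_2}\rangle = 0$ for $p_1\neq p_2$, expanding $m_{t-1} = (1/P)\sum_p m_{t-1,p}$ gives $\mathbb{E}\|m_{t-1}\|^2 = (1/P^2)\sum_{p=1}^P\mathbb{E}\|m_{t-1,p}\|^2$, i.e.\ $\sum_{p=1}^P\mathbb{E}\|m_{t-1,p}\|^2 = P^2\,\mathbb{E}\|m_{t-1}\|^2$. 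Substituting, the error-feedback part becomes $\Theta(d\gamma^2/k)\,\mathbb{E}\|m_{t-1}\|^2$, which equals $\Theta(\gamma)\,\mathbb{E}\|m_{t-1}\|^2$ once $\gamma = \Theta(k/d)$ is plugged in.

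Finally I would collect the coefficient of $\mathbb{E}\|m_{t-1}\|^2$, namely $(1-\gamma)^2 + c'\gamma$ with $c'$ proportional to the constant hidden in $\gamma = \Theta(k/d)$. Since $(1-\gamma)^2 + c'\gamma = 1 - (2-c')\gamma + \gamma^2 \le 1-\gamma$ whenever $c'$ and $\gamma$ are small enough, choosing the constant in $\gamma=\Theta(k/d)$ sufficiently small (this is exactly the ``$\gamma$ sufficiently small'' hypothesis) yields the claimed recursion. I expect the only genuine subtlety to be the previous paragraph: getting the $1/P$ scaling requires the per-node compression errors to be mutually orthogonal in expectation, which hinges on the unbiasedness of $\mathrm{RandComp}$ and the independence of the coordinate draws $J_{t,p}$ across $p$ --- a biased compressor such as top-$k$ would invalidate Lemma~\ref{lem: s_sgd_ef_cum_error_expectaion} and cost a factor $P$ here, which is precisely the source of the improvement over prior analyses.
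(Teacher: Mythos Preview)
Your proposal is correct and follows essentially the same route as the paper: combine Lemma~\ref{lem: s_sgd_ef_cum_error_equality} with Lemma~\ref{lem: s_sgd_ef_variance_bound}, use Lemma~\ref{lem: s_sgd_ef_cum_error_expectaion} to rewrite $(1/P^2)\sum_{p}\mathbb{E}\|m_{t-1,p}\|^2$ as $\mathbb{E}\|m_{t-1}\|^2$, and then pick $\gamma=\Theta(k/d)$ small enough so that $(1-\gamma)^2+\Theta(d\gamma^2/k)\le 1-\gamma$. Your added remark about why unbiased $\mathrm{RandComp}$ (as opposed to top-$k$) is essential for the orthogonality in Lemma~\ref{lem: s_sgd_ef_cum_error_expectaion} matches the paper's emphasis on this point.
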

\begin{proof}
First observe that from Lemma \ref{lem: s_sgd_ef_cum_error_expectaion}, we have
$$\frac{1}{P^2}\sum_{p=1}^P \mathbb{E}\|m_{t-1, p}\|^2 = \mathbb{E}\|m_{t-1}\|^2.$$
Using this fact and combining Lemma \ref{lem: s_sgd_ef_cum_error_equality} with Lemma \ref{lem: s_sgd_ef_variance_bound} give
\begin{align*}
    \mathbb{E}\|m_t\|^2 
    \leq \left((1 - \gamma)^2 + \Theta\left(\frac{d\gamma^2}{k}\right)\right) \|m_{t-1}\|^2 + \Theta\left(\frac{\eta_t^2d}{kP}(\mathcal{V} + \|\nabla F(x_{t-1})\|^2)\right).
\end{align*}
It is easily seen that choosing appropriately small $\gamma = \Theta(k/d)$ is sufficient for ensuring $(1-\gamma)^2 + \Theta(d\gamma^2/k) \leq 1 - \gamma$.
\end{proof}

\begin{prop}\label{prop: s_sgd_ef_cum_error_bound}
Suppose that Assumptions \ref{assump: bounded_variance} holds. Let $\gamma = \Theta(k/d)$ be sufficiently small. Then S-SGD-EF satisfies 
$$\mathbb{E}\|m_t\|^2 \leq \Theta\left(\sum_{t'=1}^t\frac{\eta_t^2 d}{kP} (1-\gamma)^{t - t'}(\mathcal{V} + \mathbb{E}\|\nabla F(x_{t'-1})\|^2)\right).$$
\end{prop}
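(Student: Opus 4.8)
The plan is to unroll the recursion from Proposition~\ref{prop: s_sgd_ef_cum_error_recursive_bound} and nothing more. That proposition states
\[
    \mathbb{E}\|m_t\|^2 \leq (1-\gamma)\,\mathbb{E}\|m_{t-1}\|^2 + \Theta\!\left(\frac{\eta_t^2 d}{kP}\bigl(\mathcal{V} + \mathbb{E}\|\nabla F(x_{t-1})\|^2\bigr)\right),
\]
where I take expectations of both sides (the statement there is conditional on the sampled indices, but Assumption~\ref{assump: bounded_variance} bounds $\mathcal{V}$ deterministically and the gradient term is already written as a full expectation in the target statement, so taking total expectation is harmless). So first I would set $C_{t'} := \Theta\bigl(\eta_{t'}^2 d/(kP)\bigr)\bigl(\mathcal{V} + \mathbb{E}\|\nabla F(x_{t'-1})\|^2\bigr)$ and observe the recursion reads $a_t \le (1-\gamma)a_{t-1} + C_t$ with $a_0 = \mathbb{E}\|m_0\|^2 = 0$ since $m_{0,p}=0$ for all $p$.

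Second, I would apply the elementary fact that such a linear recursion telescopes: iterating gives $a_t \le \sum_{t'=1}^{t}(1-\gamma)^{t-t'}C_{t'} + (1-\gamma)^t a_0 = \sum_{t'=1}^{t}(1-\gamma)^{t-t'}C_{t'}$, which is exactly the claimed bound once $C_{t'}$ is substituted back. One can either prove this one-liner by induction on $t$ or simply write out the nested substitution; either way it is a routine step. The only mild bookkeeping point is that Proposition~\ref{prop: s_sgd_ef_cum_error_recursive_bound} requires $\gamma = \Theta(k/d)$ chosen sufficiently small so that $(1-\gamma)^2 + \Theta(d\gamma^2/k) \le 1-\gamma$, and the present statement inherits exactly the same hypothesis, so I would just invoke that proposition under its stated condition on $\gamma$.

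There is essentially no obstacle here: the proof is a direct consequence of Proposition~\ref{prop: s_sgd_ef_cum_error_recursive_bound} together with the initial condition $m_0 = 0$, and the ``hard part,'' such as it is, was already done in establishing the per-step contraction (Lemmas~\ref{lem: s_sgd_ef_cum_error_equality}--\ref{lem: s_sgd_ef_cum_error_expectaion}), in particular the cross-term cancellation $\mathbb{E}\langle m_{t,p_1}, m_{t,p_2}\rangle = 0$ that produced the crucial $1/P$ factor. The $\Theta(\cdot)$ absorbs all constants from the geometric-sum manipulation, so I would not track them explicitly. If I wanted to be slightly more careful I would note that the $\Theta$ in the recursion's additive term is a fixed constant independent of $t'$, which is what legitimizes pulling it outside the sum in the final display.
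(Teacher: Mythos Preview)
Your proposal is correct and matches the paper's own proof, which simply states that the result is a direct consequence of Proposition~\ref{prop: s_sgd_ef_cum_error_recursive_bound}. Your unrolling of the contraction recursion with the initial condition $m_0=0$ is exactly the intended argument, just written out more explicitly than the paper bothers to.
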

\begin{rem}
Importantly, the expected accumulated compression error $\mathbb{E}\|m_t\|^2$ is scaled to $1/P$, i.e., linearly scaled with respect to the number of nodes. 
\end{rem}
\begin{pr_s_sgd_ef_cum_error_bound_prop}
The statement is a direct consequence of Proposition \ref{prop: s_sgd_ef_cum_error_recursive_bound}. \qed
\end{pr_s_sgd_ef_cum_error_bound_prop}

\subsection{Analysis for Convex Cases}
\begin{prop}[Strongly convex]\label{prop: s_sgd_ef_convex_bound}
Suppose that Assumptions \ref{assump: sol_existence}, \ref{assump: smoothness}, \ref{assump: bounded_variance} and \ref{assump: strong_convexity} hold. Let $\eta_t = \eta \leq 1/(8L)$.
Then S-SGD-EF satisfies
\begin{align*}
    &\mathbb{E}[F(x_{\mathrm{out}})-F(x_*)] \\
    \leq&\ \Theta\left(\frac{1}{\eta}(1 - \eta \mu)^T\|x_0 - x_*\|^2 + \frac{\eta \mathcal{V}}{P} + \frac{\sum_{t=1}^T(1 - \eta \mu)^{T-t}\left(L\mathbb{E}\|m_{t-1}\|^2 -  \mathbb{E}\|\nabla F(x_{t-1})\|^2/L\right)}{\sum_{t=1}^T (1 - \eta \mu)^{T-t}}\right),
\end{align*}
where $x_{\mathrm{out}} = x_{\hat t-1}$ and $\hat t \sim [T]$ according to  $\left\{(1 - \eta \mu)^{-t}/(\sum_{t=1}^T (1 - \eta \mu)^{-t})\right\}_{t=1}^T$.
\end{prop}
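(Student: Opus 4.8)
The plan is a perturbed‑iterate (virtual sequence) argument specialized to the strongly convex case. Introduce the virtual iterate $\widetilde x_t := x_t - m_t$. By the computation already carried out in the proof of Lemma~\ref{lem: s_sgd_ef_cum_error_equality}, this sequence obeys the \emph{compression‑free} recursion $\widetilde x_t = \widetilde x_{t-1} - \eta\,\frac1P\sum_{p=1}^P \nabla f_{i_{t,p}}(x_{t-1})$ with $\widetilde x_0 = x_0$, so that the $\Theta(d/k)$‑inflated variance of the sparsified gradients is hidden inside $m_t$ and never enters this recursion. Conditioning on the history through iteration $t-1$ (which fixes $x_{t-1}$, $m_{t-1}$, $\widetilde x_{t-1}$) and taking expectation over the step‑$t$ mini‑batch indices, I would expand $\|\widetilde x_t - x_*\|^2$, using that the averaged stochastic gradient is unbiased for $\nabla F(x_{t-1})$ and, by independence across the $P$ processors together with Assumption~\ref{assump: bounded_variance}, has variance at most $\mathcal V/P$:
\[
\mathbb E\|\widetilde x_t - x_*\|^2 \;=\; \|\widetilde x_{t-1}-x_*\|^2 \;-\; 2\eta\langle\nabla F(x_{t-1}),\widetilde x_{t-1}-x_*\rangle \;+\; \eta^2\big(\|\nabla F(x_{t-1})\|^2 + \mathcal V/P\big).
\]

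The core of the argument is to control the inner product. I would write $\widetilde x_{t-1}-x_* = (x_{t-1}-x_*) - m_{t-1}$, splitting it into $-2\eta\langle\nabla F(x_{t-1}),x_{t-1}-x_*\rangle + 2\eta\langle\nabla F(x_{t-1}),m_{t-1}\rangle$. On the first part, $\mu$‑strong convexity gives $\langle\nabla F(x_{t-1}),x_{t-1}-x_*\rangle \ge \big(F(x_{t-1})-F(x_*)\big) + \tfrac\mu2\|x_{t-1}-x_*\|^2$; I would spend a constant fraction of the resulting $-2\eta\big(F(x_{t-1})-F(x_*)\big)$ to create a term $-\Theta(\eta/L)\|\nabla F(x_{t-1})\|^2$ via the smoothness inequality $F(x_{t-1})-F(x_*)\ge\tfrac1{2L}\|\nabla F(x_{t-1})\|^2$, and convert $-\eta\mu\|x_{t-1}-x_*\|^2$ into a contraction on the virtual iterate using $\|x_{t-1}-x_*\|^2\ge\tfrac12\|\widetilde x_{t-1}-x_*\|^2-\|m_{t-1}\|^2$. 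On the cross term, Young's inequality yields $2\eta\langle\nabla F(x_{t-1}),m_{t-1}\rangle\le\Theta(\eta/L)\|\nabla F(x_{t-1})\|^2+\Theta(\eta L)\|m_{t-1}\|^2$. The leftover $\eta^2\|\nabla F(x_{t-1})\|^2$ is absorbed into the still‑negative objective‑gap term via $\|\nabla F(x_{t-1})\|^2\le 2L\big(F(x_{t-1})-F(x_*)\big)$ and $\eta\le1/(8L)$. Assembling these (and using $\mu\le L$ to write all $\|m_{t-1}\|^2$ coefficients as $\Theta(\eta L)$) yields a one‑step inequality
\[
\mathbb E\|\widetilde x_t - x_*\|^2 \le (1-\Theta(\eta\mu))\|\widetilde x_{t-1}-x_*\|^2 - \Theta(\eta)\big(F(x_{t-1})-F(x_*)\big) + \Theta(\eta)\big(L\|m_{t-1}\|^2 - \tfrac1L\|\nabla F(x_{t-1})\|^2\big) + \tfrac{\eta^2\mathcal V}{P}.
\]

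To finish, I would take full expectations, rearrange, multiply by the weights $(1-\eta\mu)^{T-t}$ and sum over $t\in[T]$ so the $\|\widetilde x_t-x_*\|^2$ terms telescope; dropping the nonnegative terminal $\mathbb E\|\widetilde x_T-x_*\|^2$ and using $\widetilde x_0=x_0$, then dividing by $\Theta(\eta)\sum_{t=1}^T(1-\eta\mu)^{T-t}$ puts exactly $\mathbb E[F(x_{\mathrm{out}})-F(x_*)]$ on the left, since $\hat t$ is drawn with probabilities $\propto(1-\eta\mu)^{-t}\propto(1-\eta\mu)^{T-t}$. Bounding the geometric sum by $\Theta(1/(\eta\mu))$ turns the initial‑condition contribution into $\Theta\big(\mu(1-\eta\mu)^T\|x_0-x_*\|^2\big)$, which is at most $\Theta\big(\tfrac1\eta(1-\eta\mu)^T\|x_0-x_*\|^2\big)$ because $\eta\mu\le1$; the noise contribution becomes $\Theta(\eta\mathcal V/P)$; and the remaining terms appear as the stated $(1-\eta\mu)^{T-t}$‑weighted average of $L\mathbb E\|m_{t-1}\|^2-\tfrac1L\mathbb E\|\nabla F(x_{t-1})\|^2$. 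The delicate point — where I expect to spend the most care — is managing the mismatch between $x_{t-1}$ (where the gradient is evaluated and which is the reported iterate) and $\widetilde x_{t-1}$ (which carries the contraction): every conversion must keep the coefficient of $\|m_{t-1}\|^2$ at scale $L$ rather than, say, $L^2/\mu$, and must leave a genuinely negative multiple of $\|\nabla F(x_{t-1})\|^2$, since those are precisely the features exploited later when Proposition~\ref{prop: s_sgd_ef_cum_error_bound}'s bound on $\mathbb E\|m_{t-1}\|^2$ — which itself contains $\|\nabla F\|^2$ terms — is substituted.
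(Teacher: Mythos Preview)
Your proposal is correct and follows essentially the same perturbed-iterate argument as the paper: introduce the virtual sequence $\widetilde x_t$, expand $\|\widetilde x_t - x_*\|^2$, split the inner product into the $x_{t-1}-x_*$ part (handled by strong convexity and smoothness) and the $m_{t-1}$ cross term (handled by Young), then weight by $(1-\eta\mu)^{T-t}$ and telescope. The only cosmetic differences are that the paper extracts $-\Theta(\eta/L)\|\nabla F(x_{t-1})\|^2$ in one shot via the averaged bound $F(x_*)\ge F(x_{t-1})+\langle\nabla F(x_{t-1}),x_*-x_{t-1}\rangle+\tfrac{1}{4L}\|\nabla F(x_{t-1})\|^2+\tfrac{\mu}{4}\|x_{t-1}-x_*\|^2$ rather than spending part of the function gap, and your explicit conversion $\|x_{t-1}-x_*\|^2\ge\tfrac12\|\widetilde x_{t-1}-x_*\|^2-\|m_{t-1}\|^2$ is a step the paper actually glosses over.
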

\begin{pr_s_sgd_ef_convex_bound_prop}
Let $\widetilde{x}_t = \widetilde{x}_{t-1} - \eta_t (1/P)\left(\sum_{p=1}^P\nabla f_{i_{t, p}}(x_{t-1})\right)$ and $\widetilde{x}_0 = x_0$.
By the definition of $\widetilde{x}_t$, we have
\begin{align*}
    \|\widetilde{x}_t - x_*\|^2 =&\ \left\|\widetilde{x}_{t-1} - \eta_t \frac{1}{P}\sum_{p=1}^P \nabla f_{i_{t, p}}(x_{t-1}) - x_*\right\|^2 \\
    =&\ \|\widetilde{x}_{t-1} - x_*\|^2 - 2\eta_t \left\langle \frac{1}{P}\sum_{p=1}^P \nabla f_{i_{t, p}}(x_{t-1}), \widetilde{x}_{t-1} - x_* \right\rangle + \eta_t^2\left\|\frac{1}{P}\sum_{p=1}^P\nabla f_{i_{t, p}}(x_{t-1})\right\|^2 \\
    =&\ \|\widetilde{x}_{t-1} - x_*\|^2 -2\eta_t\left\langle \frac{1}{P}\sum_{p=1}^P \nabla f_{i_{t, p}}(x_{t-1}), x_{t-1} - x_*\right\rangle + \eta_t^2\left\|\frac{1}{P}\sum_{p=1}^P\nabla f_{i_{t, p}}(x_{t-1})\right\|^2 \\
    &- 2\eta_t \left\langle \frac{1}{P}\sum_{p=1}^P \nabla f_{i_{t, p}}(x_{t-1}), x_{t-1} - \widetilde{x}_{t-1} \right\rangle.
\end{align*}
Taking expectations with respect to the $t$-th iteration, we get
\begin{align*}
    \mathbb{E}\|\widetilde{x}_t - x_*\|^2 =&\ \|\widetilde{x}_{t-1} - x_*\|^2 - 2\eta_t\langle \nabla F(x_{t-1}), x_{t-1} - x_*\rangle +  \eta_t^2\mathbb{E}\left\|\frac{1}{P}\sum_{p=1}^P\nabla f_{i_{t, p}}(x_{t-1})\right\|^2 \\
    &-2\eta_t \left\langle \nabla F(x_{t-1}), x_{t-1} - \widetilde{x}_{t-1} \right\rangle \\
    \leq&\ \|\widetilde{x}_{t-1} - x_*\|^2 - 2\eta_t\langle \nabla F(x_{t-1}), x_{t-1} - x_*\rangle + \eta_t^2\mathbb{E}\left\|\frac{1}{P}\sum_{p=1}^P\nabla f_{i_{t, p}}(x_{t-1}) - \nabla F(x_{t-1})\right\|^2 \\
    &+ \eta_t\left(\frac{1}{4L} + \eta_t\right)\|\nabla F(x_{t-1})\|^2 + 4\eta_t L\|x_{t-1} - \widetilde{x}_{t-1}\|^2.
\end{align*}
Here the last inequality follows from the unbiasedness of $(1/P)\sum_{p=1}^P\nabla f_{i_{t, p}}(x_{t-1})$ and Cauchy-Schwartz inequality with the arithmetic-geometric mean inequality. Since $F$ is $L$-smooth and $\mu$-strongly convex, we have
$$F(x_{t-1}) + \langle \nabla F(x_{t-1}), x_* - x_{t-1}\rangle + \frac{1}{4L}\|\nabla F(x_{t-1})\|^2 + \frac{\mu}{4}\|x_{t-1} - x_*\|^2 \leq F(x_*), $$
and this implies
$$ -2\eta_t \langle \nabla F(x_{t-1}), x_{t-1} - x_* \rangle \leq -2\eta_t(F(x_{t-1}) - F(x_*)) - \frac{\eta_t}{2L}\|\nabla F(x_{t-1})\|^2 - \frac{\eta_t\mu}{2}\|x_{t-1} - x_*\|^2 .$$
Applying this inequality to the above one, we get
\begin{align*}
    \mathbb{E}\|\widetilde{x}_t - x_*\|^2 \leq&\ \left(1 - \frac{\eta_t\mu}{2}\right)\|\widetilde{x}_{t-1} - x_*\|^2 -2\eta_t(F(x_{t-1}) - F(x_*)) - \left(\frac{\eta_t}{4L} - \eta_t^2\right)\|\nabla F(x_{t-1})\|^2 \\
    &+  \eta_t^2\mathbb{E}\left\|\frac{1}{P}\sum_{p=1}^P\nabla f_{i_{t, p}}(x_{t-1}) - \nabla F(x_{t-1})\right\|^2 + 4\eta_tL\|x_{t-1} - \widetilde{x}_{t-1}\|^2
\end{align*}
Noting that $\mathbb{E}\|(1/P)\sum_{p=1}^P\nabla f_{i_{t, p}}(x_{t-1}) - \nabla F(x_{t-1})\|^2 \leq \mathcal{V}/P$ and $\eta_t \leq 1/(8L)$ from the assumption. Taking expectations with respect to the all random variables, we have
\begin{align*}
    \mathbb{E}\|\widetilde{x}_t - x_*\|^2 \leq&\ \left(1 - \frac{\eta_t\mu}{2}\right)\mathbb{E}\|\widetilde{x}_{t-1} - x_*\|^2 -2\eta_t\mathbb{E}[F(x_{t-1}) - F(x_*)] + \frac{\eta_t^2\mathcal{V}}{P} \\ 
    &+ 2\eta_t L \mathbb{E}\|m_{t-1}\|^2 - \frac{\eta_t}{8L}\mathbb{E}\|\nabla F(x_{t-1})\|^2.
\end{align*}
Here we used $x_{t-1} - \widetilde{x}_{t-1} = m_{t-1}$.

Recursively using the above inequality (with $\eta_t = \eta$) and rearranging the result give
\begin{align*}
    &\mathbb{E}[F(x_{\mathrm{out}})-F(x_*)] \\
    \leq&\  \Theta\left(\frac{(1 - \eta \mu)^T\|x_0 - x_*\|^2}{\eta \sum_{t=1}^T (1 - \eta \mu)^{T-t}} + \frac{\eta \mathcal{V}}{P} + \frac{\sum_{t=1}^T(1 - \eta \mu)^{T-t}\left(L\mathbb{E}\|m_{t-1}\|^2 -  \mathbb{E}\|\nabla F(x_{t-1})\|^2/L\right)}{\sum_{t=1}^T (1 - \eta \mu)^{T-t}}\right) \\ 
    \leq&\ \Theta\left(\frac{1}{\eta}(1 - \eta \mu)^T\|x_0 - x_*\|^2 + \frac{\eta \mathcal{V}}{P} + \frac{\sum_{t=1}^T(1 - \eta \mu)^{T-t}\left(L\mathbb{E}\|m_{t-1}\|^2 -  \mathbb{E}\|\nabla F(x_{t-1})\|^2/L\right)}{\sum_{t=1}^T (1 - \eta \mu)^{T-t}}\right).
\end{align*}
This is the desired result.
\end{pr_s_sgd_ef_convex_bound_prop}

\begin{lem}\label{lem: s_sgd_ef_geometric_sum}
Let $0 < r_1, r_2 < 1$ and $2r_2 \leq r_1$. Then for any non-negative sequence $\{c_t\}_{t=1}^{\infty}$, 
\begin{align*}
    \sum_{t=1}^T(1 - r_2)^{T-t}\sum_{t'=1}^t (1 - r_1)^{t-t'}c_{t'} \leq \frac{2}{r_1}\sum_{t=1}^T(1 - r_2)^{T-t}c_t.
\end{align*}
\end{lem}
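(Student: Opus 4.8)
The plan is to prove the inequality by exchanging the order of the two summations and then evaluating the resulting inner geometric sum. First I would rewrite the left-hand side as
\[
\sum_{t=1}^T(1-r_2)^{T-t}\sum_{t'=1}^t (1-r_1)^{t-t'}c_{t'} = \sum_{t'=1}^T c_{t'}\sum_{t=t'}^T (1-r_2)^{T-t}(1-r_1)^{t-t'},
\]
which is legitimate since every term is non-negative. The task then reduces to bounding, for each fixed $t'$, the inner sum $S_{t'} := \sum_{t=t'}^T (1-r_2)^{T-t}(1-r_1)^{t-t'}$ by $\tfrac{2}{r_1}(1-r_2)^{T-t'}$, after which summing against $c_{t'} \ge 0$ gives the claim.

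Next I would factor out $(1-r_2)^{T-t'}$ by writing $(1-r_2)^{T-t} = (1-r_2)^{T-t'}\bigl(\tfrac{1}{1-r_2}\bigr)^{t-t'}$, so that, setting $\rho := \tfrac{1-r_1}{1-r_2}$ and reindexing with $j := t - t'$,
\[
S_{t'} = (1-r_2)^{T-t'}\sum_{j=0}^{T-t'}\rho^{\,j}.
\]
The hypothesis $2r_2 \le r_1$ forces in particular $r_2 < r_1$, hence $1 - r_1 < 1 - r_2$ and $\rho \in (0,1)$, so the finite geometric sum is bounded by the infinite one: $\sum_{j=0}^{T-t'}\rho^j \le \tfrac{1}{1-\rho} = \tfrac{1-r_2}{\,r_1 - r_2\,}$.

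Finally I would invoke $2r_2 \le r_1$ a second time, now as $r_1 - r_2 \ge r_1/2$, together with $1 - r_2 \le 1$, to get $\tfrac{1-r_2}{r_1-r_2} \le \tfrac{2(1-r_2)}{r_1} \le \tfrac{2}{r_1}$; substituting back yields $S_{t'} \le \tfrac{2}{r_1}(1-r_2)^{T-t'}$, and plugging this into the swapped double sum finishes the proof. I do not expect a genuine obstacle: the lemma is essentially a bookkeeping computation, and the only points requiring minor care are verifying that the geometric ratio $\rho$ is strictly below $1$ (so the infinite-sum bound is valid) and correctly tracking the leftover factor $(1-r_2)^{T-t'}$ through the reindexing.
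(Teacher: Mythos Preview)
Your proposal is correct and follows essentially the same approach as the paper: swap the order of summation, factor out $(1-r_2)^{T-t'}$, and bound the resulting geometric series with ratio $\rho=\tfrac{1-r_1}{1-r_2}$ using $r_1-r_2\ge r_1/2$. The only cosmetic difference is that the paper writes out the finite geometric sum formula before passing to the bound, whereas you go straight to the infinite-sum bound $\tfrac{1}{1-\rho}$; your version is slightly cleaner.
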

\begin{proof}
\begin{align*}
    \sum_{t=1}^T(1 - r_2)^{T-t}\sum_{t'=1}^t (1 - r_1)^{t-t'}c_{t'} 
    =&\ \sum_{t=1}^T (1 - r_2)^{T-t}\sum_{t'=1}^T \mathbbm{1}_{t' \leq t}(1 - r_1)^{t-t'}c_{t'} \\
    =&\ \sum_{t'=1}^T c_{t'} (1 - r_2)^{T-t'} \sum_{t=t'}^T \left(\frac{1 - r_1}{1 - r_2}\right)^{t - t'} \\
    \leq&\ \sum_{t'=1}^T c_{t'} (1 - r_2)^{T-t'}\frac{1 - \left(\frac{1-r_1}{1-r_2}\right)^{T-t'}}{1 - \frac{1-r_1}{1-r_2}} \\
    \leq&\ \frac{2(1-r_2)}{r_1}\sum_{t=1}^T(1 - r_2)^{T-t}c_t.
\end{align*}
\end{proof}

\begin{thm}[Strongly convex]\label{thm: s_sgd_ef_strongly_convex}
Suppose that Assumptions \ref{assump: sol_existence}, \ref{assump: smoothness}, \ref{assump: bounded_variance} and \ref{assump: strong_convexity} hold. 
Let $\gamma = \Theta(k/d)$ be sufficiently small and $T = \widetilde\Theta(1/(\eta\mu))$ be sufficiently large. Then the iteration complexity $T$ of S-SGD-EF with appropriate $\eta_t = \eta$ for achieving $\mathbb{E}[F(x_{\mathrm{out}}) - F(x_*)] \leq \varepsilon$ is 
\begin{align*}
     \widetilde{O}\left( \frac{L}{\mu} + \frac{\mathcal{V}}{P}\frac{1}{\mu\varepsilon}  +  \frac{d}{k} + \frac{d}{k\sqrt{P}}\left(\frac{L}{\mu} + \frac{\sqrt{L\mathcal{V}}}{\mu\sqrt{\varepsilon}}\right) \right),
\end{align*}
where $x_{\mathrm{out}}$ is defined in Proposition \ref{prop: s_sgd_ef_convex_bound}.
\end{thm}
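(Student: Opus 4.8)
The plan is to feed the cumulative compression error bound of Proposition~\ref{prop: s_sgd_ef_cum_error_bound} into the per-iteration convex bound of Proposition~\ref{prop: s_sgd_ef_convex_bound}, use Lemma~\ref{lem: s_sgd_ef_geometric_sum} to collapse the resulting double geometric sum, observe that the gradient-norm terms cancel for suitably small $\eta$, and finally optimize $\eta$ subject to the accumulated constraints so that $T=\widetilde\Theta(1/(\eta\mu))$ yields the claimed rate.

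First I would substitute $\mathbb{E}\|m_{t-1}\|^2 \le \Theta\!\big(\sum_{t'=1}^{t-1}\tfrac{\eta^2 d}{kP}(1-\gamma)^{(t-1)-t'}(\mathcal{V}+\mathbb{E}\|\nabla F(x_{t'-1})\|^2)\big)$ into the weighted average in Proposition~\ref{prop: s_sgd_ef_convex_bound}. The term $\sum_{t=1}^T(1-\eta\mu)^{T-t}\,L\,\mathbb{E}\|m_{t-1}\|^2$ then becomes a double sum of the form handled by Lemma~\ref{lem: s_sgd_ef_geometric_sum} with $r_1=\gamma$, $r_2=\eta\mu$ and $c_{t'} = \tfrac{L\eta^2 d}{kP}(\mathcal{V}+\mathbb{E}\|\nabla F(x_{t'-1})\|^2)$. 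Applying the lemma (which requires $2\eta\mu\le\gamma$, a restriction on $\eta$ compatible with $\gamma=\Theta(k/d)$) and using $1/\gamma=\Theta(d/k)$ turns it into $\Theta\!\big(\tfrac{L\eta^2 d^2}{k^2 P}\big)\sum_{t=1}^T(1-\eta\mu)^{T-t}(\mathcal{V}+\mathbb{E}\|\nabla F(x_{t-1})\|^2)$.

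The crux is then the cancellation of the gradient-norm terms: in the numerator of the weighted average, the compression error now contributes $\mathbb{E}\|\nabla F(x_{t-1})\|^2$ with coefficient $\Theta(L\eta^2 d^2/(k^2P))$, while Proposition~\ref{prop: s_sgd_ef_convex_bound} already contains the same quantity with coefficient $-1/L$. Choosing $\eta$ small enough that $L\eta^2 d^2/(k^2P)\le \Theta(1/L)$, i.e.\ $\eta\lesssim k\sqrt{P}/(dL)$, renders the net gradient-norm contribution nonpositive, so it is dropped, leaving
\[
\mathbb{E}[F(x_{\mathrm{out}})-F(x_*)] \le \Theta\!\left(\frac{1}{\eta}(1-\eta\mu)^T\|x_0-x_*\|^2 + \frac{\eta\mathcal{V}}{P} + \frac{L\eta^2 d^2\mathcal{V}}{k^2 P}\right).
\]

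Finally I would require each summand to be $O(\varepsilon)$. The first forces $T=\widetilde\Theta(1/(\eta\mu))$ (the logarithm being absorbed into $\widetilde\Theta$ and $D=\|x_0-x_*\|^2$ treated as $\Theta(1)$); the second forces $\eta\lesssim P\varepsilon/\mathcal{V}$; the third forces $\eta\lesssim k\sqrt{P\varepsilon}/(d\sqrt{L\mathcal{V}})$. Taking $\eta$ as the minimum of these and the earlier bounds $\eta\le 1/(8L)$, $\eta\lesssim k/(d\mu)$, $\eta\lesssim k\sqrt{P}/(dL)$, the quantity $1/(\eta\mu)$ becomes the maximum of $L/\mu$, $d/k$, $Ld/(k\sqrt{P}\mu)$, $\mathcal{V}/(P\mu\varepsilon)$ and $d\sqrt{L\mathcal{V}}/(k\sqrt{P\varepsilon}\mu)$, whose sum is precisely the advertised $\widetilde O\!\big(\tfrac{L}{\mu}+\tfrac{\mathcal{V}}{P}\tfrac{1}{\mu\varepsilon}+\tfrac{d}{k}+\tfrac{d}{k\sqrt{P}}(\tfrac{L}{\mu}+\tfrac{\sqrt{L\mathcal{V}}}{\mu\sqrt{\varepsilon}})\big)$. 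I expect the cancellation step to be the delicate part — one must track the constants through Proposition~\ref{prop: s_sgd_ef_cum_error_bound}, Lemma~\ref{lem: s_sgd_ef_geometric_sum} and Proposition~\ref{prop: s_sgd_ef_convex_bound} carefully enough that the $-\mathbb{E}\|\nabla F\|^2/L$ term genuinely dominates; the remaining optimization over $\eta$ is routine bookkeeping.
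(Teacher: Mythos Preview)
Your proposal is correct and matches the paper's own proof essentially step for step: substitute Proposition~\ref{prop: s_sgd_ef_cum_error_bound} into Proposition~\ref{prop: s_sgd_ef_convex_bound}, collapse the double sum via Lemma~\ref{lem: s_sgd_ef_geometric_sum} under $\eta\mu\lesssim\gamma$, use $\eta\lesssim k\sqrt{P}/(dL)$ to make the gradient-norm contribution nonpositive, and then optimize $\eta$ over exactly the five constraints you list (which coincide with the paper's choice $\eta=\Theta(1/L\wedge\gamma\sqrt{P}/L\wedge\gamma/\mu\wedge P\varepsilon/\mathcal{V}\wedge\gamma\sqrt{P\varepsilon}/\sqrt{L\mathcal{V}})$). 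The cancellation you flag as delicate is handled in the paper precisely the way you outline, with the constants hidden in $\Theta(\cdot)$.
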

\begin{rem}
Theorem \ref{thm: s_sgd_ef_strongly_convex} implies that S-SGD-EF asymptotically achieves $\mathcal{V}/(P\mu\varepsilon)$, that is the asymptotic iteration complexity of non-sparsified parallel SGD, because the last compression error term has a dependence on $1/\sqrt{\varepsilon}$ rather than $1/\varepsilon$. Also note that the last term is scaled to $\sqrt{P}$. This is a desirable property for distributed optimization with $P \gg 1$. However, the last term has a factor of $d/k$, which may be large and can dominate the other terms for moderate accuracy $\varepsilon$. Thus, consideration of non-asymptotic behavior is also important particularly for high compression settings.
\end{rem}
\begin{pr_s_sgd_ef_strongly_convex_thm}
Let $\eta_t = \eta = \Theta(1/L \wedge \gamma \sqrt{P}/L \wedge \gamma/\mu
 \wedge P\varepsilon/\mathcal{V} \wedge (L\mathcal{V})^{-1/2}\gamma\sqrt{P\varepsilon})$.
From Proposition \ref{prop: s_sgd_ef_convex_bound}, we have
\begin{align}
    &\mathbb{E}[F(x_{\mathrm{out}})-F(x_*)] \notag \\
    \leq&\ \Theta\left(\frac{1}{\eta}(1 - \eta \mu)^T\|x_0 - x_*\|^2 + \frac{\eta \mathcal{V}}{P} + \frac{\sum_{t=1}^T(1 - \eta \mu)^{T-t}\left(L\mathbb{E}\|m_{t-1}\|^2 -  \mathbb{E}\|\nabla F(x_{t-1})\|^2/L\right)}{\sum_{t=1}^T (1 - \eta \mu)^{T-t}}\right) \notag \\ 
     \leq&\ \Theta\left(\frac{1}{\eta}(1 - \eta \mu)^T\|x_0 - x_*\|^2 + \frac{\eta \mathcal{V}}{P} + \frac{\eta^2 Ld^2\mathcal{V}}{k^2P}\right. \notag \notag \\ 
    &\left.+  \frac{\sum_{t=1}^T \frac{\eta^2Ld}{kP}(1 - \eta \mu)^{T-t}\sum_{t'=1}^t (1-\gamma)^{t - t'}\mathbb{E}\|\nabla F(x_{t'-1})\|^2 - \sum_{t=1}^T\frac{1}{L}(1 - \eta \mu)^{T-t}\mathbb{E}\|\nabla F(x_{t-1})\|^2}{\sum_{t=1}^T (1 - \eta \mu)^{T-t}}\right) \label{ineq: s_sgd_ef_grad_diff}
\end{align}
Since $\eta \leq \Theta(\gamma/\mu)$ be sufficiently small, from Lemma \ref{lem: s_sgd_ef_geometric_sum}, we have
\begin{align*}
    \sum_{t=1}^T \frac{\eta^2Ld}{kP}(1 - \eta \mu)^{T-t}\sum_{t'=1}^t (1-\gamma)^{t - t'}\mathbb{E}\|\nabla F(x_{t'-1})\|^2 \leq \Theta\left(\frac{\eta^2Ld^2}{k^2P}\sum_{t=1}^T (1 - \eta \mu)^{T-t}\mathbb{E}\|\nabla F(x_{t-1})\|^2 \right).
\end{align*}
Also, since $\eta \leq \Theta(k\sqrt{P}/(dL))$ is assumed, The last term in (\ref{ineq: s_sgd_ef_grad_diff}) is negative with appropriate choice of $\eta$. Hence, we obtain
\begin{align*}
    &\mathbb{E}[F(x_{\mathrm{out}})-F(x_*)] \\  
    \leq&\ \Theta\left(\frac{1}{\eta}(1 - \eta \mu)^T\|x_0 - x_*\|^2 + \frac{\eta \mathcal{V}}{P} + \frac{\eta^2 Ld^2\mathcal{V}}{k^2P}\right).
\end{align*}
Let $T = \Theta(1/(\eta\mu)\mathrm{log}(\|x_0 - x_*\|^2/(\eta\varepsilon))$ to be sufficiently large. Then substituting the definition of $\eta$ gives $\mathbb{E}[F(x_{\mathrm{out}}) - F(x_*)] \leq \varepsilon$. \qed
\end{pr_s_sgd_ef_strongly_convex_thm}

\subsection{Analysis for Nonconvex Cases}
\begin{prop}[General nonconvex]\label{prop: s_sgd_ef_nonconvex_bound}
Suppose that Assumptions \ref{assump: sol_existence}, \ref{assump: smoothness} and \ref{assump: bounded_variance} hold. Assume that $\eta_t = \eta \leq 1/(2L)$ for $t \in \mathbb{N}$. Then S-SGD-EF satisfies 
\begin{align*}
\mathbb{E}\|\nabla F(x_{\mathrm{out}})\|^2 \leq \Theta\left(\frac{F(x_{\mathrm{in}}) - F(x_*)}{\eta T} + \frac{\eta L\mathcal{V}}{P} + \frac{L^2}{T} \sum_{t=1}^T\mathbb{E}\|m_{t-1}\|^2 - \frac{1}{T}\sum_{t=1}^T \mathbb{E}\|\nabla F(x_{T-1})\|^2\right), \end{align*}
where $x_{\mathrm{out}} = x_{\hat t-1}$ and $\hat t \sim [T]$ with probability $\{1/T\}_{t=1}^T$.
\end{prop}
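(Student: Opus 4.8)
The plan is to run the usual nonconvex ``descent lemma'' argument, but on the \emph{virtual} uncompressed iterate rather than on $x_t$ itself. Following the construction in the proof of Lemma~\ref{lem: s_sgd_ef_cum_error_equality}, define $\widetilde x_t = \widetilde x_{t-1} - \eta\,\frac1P\sum_{p=1}^P \nabla f_{i_{t,p}}(x_{t-1})$ with $\widetilde x_0 = x_0 = x_{\mathrm{in}}$; then $x_{t-1} - \widetilde x_{t-1} = m_{t-1}$ for every $t$. The point of this device is that $\widetilde x_t$ is exactly what a genuine parallel mini-batch SGD step would produce, so its drift has conditional mean $-\eta\nabla F(x_{t-1})$ and conditional second moment at most $\eta^2(\|\nabla F(x_{t-1})\|^2 + \mathcal V/P)$, while the only compressed quantity that enters this estimate is $m_{t-1}$, whose second moment is already controlled by Proposition~\ref{prop: s_sgd_ef_cum_error_bound}.

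First I would apply $L$-smoothness (Assumption~\ref{assump: smoothness}) to obtain $F(\widetilde x_t) \le F(\widetilde x_{t-1}) + \langle \nabla F(\widetilde x_{t-1}),\, \widetilde x_t - \widetilde x_{t-1}\rangle + \tfrac{L}{2}\|\widetilde x_t - \widetilde x_{t-1}\|^2$, and then take the expectation over the fresh $t$-th mini-batches $i_{t,1},\dots,i_{t,P}$ conditioned on the past. By unbiasedness the linear term becomes $-\eta\langle \nabla F(\widetilde x_{t-1}), \nabla F(x_{t-1})\rangle$, and the bias--variance split together with independence across the $P$ workers and Assumption~\ref{assump: bounded_variance} bounds the quadratic term by $\tfrac{L\eta^2}{2}(\|\nabla F(x_{t-1})\|^2 + \mathcal V/P)$. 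For the linear term I would write $\langle \nabla F(\widetilde x_{t-1}), \nabla F(x_{t-1})\rangle = \|\nabla F(x_{t-1})\|^2 + \langle \nabla F(\widetilde x_{t-1}) - \nabla F(x_{t-1}), \nabla F(x_{t-1})\rangle$, bound the residual inner product by $L\|m_{t-1}\|\,\|\nabla F(x_{t-1})\|$ via smoothness and $\widetilde x_{t-1} - x_{t-1} = -m_{t-1}$, and split it with the arithmetic--geometric mean inequality into a $\Theta(\eta L^2)\|m_{t-1}\|^2$ term plus a small multiple of $\eta\|\nabla F(x_{t-1})\|^2$.

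Collecting terms and choosing the splitting constant appropriately, the net coefficient on $\|\nabla F(x_{t-1})\|^2$ is $\le -\Theta(\eta)$, using $\eta \le 1/(2L)$ to dominate the $\tfrac{L\eta^2}{2}$ contribution; I would retain part of this as an explicit negative term on the right-hand side, since it is precisely what the downstream theorem uses to absorb the $\|\nabla F\|^2$ contributions hidden inside $\mathbb E\|m_{t-1}\|^2$ after substituting Proposition~\ref{prop: s_sgd_ef_cum_error_bound} (analogously to the strongly convex analysis). This yields the one-step bound $\mathbb E[F(\widetilde x_t)] \le \mathbb E[F(\widetilde x_{t-1})] - \Theta(\eta)\,\mathbb E\|\nabla F(x_{t-1})\|^2 + \Theta(\eta L^2)\,\mathbb E\|m_{t-1}\|^2 + \Theta(\eta^2 L\mathcal V/P)$. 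Summing over $t=1,\dots,T$, using $F(\widetilde x_T) \ge F(x_*)$ and $F(\widetilde x_0) = F(x_{\mathrm{in}})$, dividing by $\Theta(\eta T)$, and recognising $\tfrac1T\sum_{t=1}^T \mathbb E\|\nabla F(x_{t-1})\|^2 = \mathbb E\|\nabla F(x_{\mathrm{out}})\|^2$ from the law of $\hat t$, gives the stated inequality.

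The step I expect to be the main obstacle is the conditioning bookkeeping rather than any hard estimate: $\widetilde x_{t-1}$, $x_{t-1}$ and $m_{t-1}$ all depend on the mini-batch and compression randomness of rounds $1,\dots,t-1$ and must be frozen, only the round-$t$ mini-batches are averaged out, and the round-$t$ compression randomness plays no role here because $\widetilde x_t$ is built from uncompressed gradients (it is accounted for separately in Proposition~\ref{prop: s_sgd_ef_cum_error_bound}). Everything else mirrors the proof of Proposition~\ref{prop: s_sgd_ef_convex_bound} and is routine.
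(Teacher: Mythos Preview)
Your proposal is correct and matches the paper's own proof essentially step for step: the paper also works with the virtual iterate $\widetilde x_t$, applies the smoothness descent inequality to $F(\widetilde x_t)$, takes the conditional expectation over the round-$t$ mini-batches, handles the cross term $\langle \nabla F(\widetilde x_{t-1}) - \nabla F(x_{t-1}), \nabla F(x_{t-1})\rangle$ via Young's inequality and $L$-smoothness to produce the $\eta L^2\|m_{t-1}\|^2$ term, and then telescopes and divides by $\eta T$. The only cosmetic difference is that the paper fixes the Young splitting constant to $1/2$ rather than leaving it free, but the resulting one-step bound and the final conclusion are identical.
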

\begin{proof}
Let $\widetilde{x}_t = \widetilde{x}_{t-1} - \eta_t (1/P)\left(\sum_{p=1}^P\nabla f_{i_{t, p}}(x_{t-1})\right)$ and $\widetilde{x}_0 = x_0$.
By the $L$-smoothness of $F$, we have
\begin{align*}
    F(\widetilde{x}_t) \leq F(\widetilde{x}_{t-1}) + \langle \nabla F(\widetilde{x}_{t-1}), \widetilde{x}_t - \widetilde{x}_{t-1} \rangle + \frac{L}{2}\|\widetilde{x}_t - \widetilde{x}_{t-1}\|^2. 
\end{align*}
Since $\widetilde{x}_t = \widetilde{x}_{t-1} - \eta_t \left(\frac{1}{P}\sum_{p=1}^P\nabla f_{i_{t, p}}(x_{t-1})\right)$, it follows that
\begin{align*}
    F(\widetilde{x}_t) \leq&\  F(\widetilde{x}_{t-1}) - \eta_t \left\langle \nabla F(\widetilde{x}_{t-1}),  \frac{1}{P}\sum_{p=1}^P\nabla f_{i_{t, p}}(x_{t-1}) \right\rangle + \frac{L}{2}\left\| \frac{1}{P}\sum_{p=1}^P\nabla f_{i_{t, p}}(x_{t-1})\right\|^2 \\
    =&\ F(\widetilde{x}_{t-1}) - \eta_t \left\langle \nabla F(x_{t-1}), \frac{1}{P}\sum_{p=1}^P\nabla f_{i_{t, p}}(x_{t-1}) \right\rangle \\
    &+ \eta_t \left\langle \nabla F(x_{t-1}) - \nabla F(\widetilde{x}_{t-1}), \frac{1}{P}\sum_{p=1}^P\nabla f_{i_{t, p}}(x_{t-1}) \right\rangle
    + \frac{\eta_t^2L}{2}\left\| \frac{1}{P}\sum_{p=1}^P\nabla f_{i_{t, p}}(x_{t-1})\right\|^2.
\end{align*}
Taking expectations with respect to $\{i_{t, 1}, \ldots, i_{t, P}\}$ conditioned on $\{i_{t', p} \mid t' \in [t-1], p \in [P]\}$, we have
\begin{align*}
    \mathbb{E}[F(\widetilde{x}_t)] \leq&\ F(\widetilde{x}_{t-1}) - \eta_t\|\nabla F(x_{t-1})\|^2 + \eta_t \left\langle \nabla F(x_{t-1}) - \nabla F(\widetilde{x}_{t-1}), \nabla F(x_{t-1}) \right\rangle \\
    &+ \frac{\eta_t^2L}{2}\mathbb{E}\left\|\frac{1}{P}\sum_{p=1}^P\nabla f_{i_{t, p}}(x_{t-1})\right\|^2. 
\end{align*}
Using 
$$\left\langle \nabla F(x_{t-1}) - \nabla F(\widetilde{x}_{t-1}), \nabla F(x_{t-1}) \right\rangle \leq \frac{1}{2}\|\nabla F(x_{t-1}) - \nabla F(\widetilde{x}_{t-1})\|^2 + \frac{1}{2}\|\nabla F(x_{t-1})\|^2$$
and
$$\mathbb{E}\left\|\frac{1}{P}\sum_{p=1}^P\nabla f_{i_{t, p}}(x_{t-1})\right\|^2 = \mathbb{E}\left\|\frac{1}{P}\sum_{p=1}^P\nabla f_{i_{t, p}}(x_{t-1}) - \nabla F(x_{t-1})\right\|^2 + \|\nabla F(x_{t-1})\|^2, $$
we get
\begin{align*}
    \mathbb{E}[F(\widetilde{x}_t)] \leq&\ F(\widetilde{x}_{t-1}) - \frac{\eta_t}{2}\left(1 - \eta_t L\right)\|\nabla F(x_{t-1})\|^2 + \frac{\eta_t^2L}{2}\mathbb{E}\left\|\frac{1}{P}\sum_{p=1}^P\nabla f_{i_{t, p}}(x_{t-1}) - \nabla F(x_{t-1})\right\|^2 \\
    &+ \frac{\eta_t}{2}\| \nabla F(x_{t-1}) - \nabla F(\widetilde{x}_{t-1})\|^2  \\
    \leq&\ F(\widetilde{x}_{t-1}) - \frac{\eta_t}{2}\left(1 - \eta_t L\right)\|\nabla F(x_{t-1})\|^2 + \frac{\eta_t^2L}{2}\mathbb{E}\left\|\frac{1}{P}\sum_{p=1}^P\nabla f_{i_{t, p}}(x_{t-1}) - \nabla F(x_{t-1})\right\|^2 \\
    &+ \frac{\eta_tL^2}{2}\|x_{t-1} - \widetilde{x}_{t-1}\|^2 \\
    \leq&\ F(\widetilde{x}_{t-1}) - \frac{\eta_t}{2}\left(1 - \eta_t L\right)\|\nabla F(x_{t-1})\|^2 + \frac{\eta_t^2L\mathcal{V}}{2P}
    + \frac{\eta_tL^2}{2}\|x_{t-1} - \widetilde{x}_{t-1}\|^2 \\
    \leq&\ F(\widetilde{x}_{t-1}) - \frac{\eta_t}{4}\|\nabla F(x_{t-1})\|^2 + \frac{\eta_t^2L\mathcal{V}}{2P}
    + \frac{\eta_tL^2}{2}\|m_{t-1}\|^2
\end{align*}
Here the second inequality follows from $L$-smoothness of $F$. The third inequality holds because $\mathbb{E}\left\|(1/P)\sum_{p=1}^P\nabla f_{i_{t, p}}(x_{t-1}) - \nabla F(x_{t-1})\right\|^2 \leq \frac{\mathcal{V}}{P}$. The last inequality is due to the fact that $m_{t-1} = x_{t-1} - \widetilde{x}_{t-1}$ and $\eta_t \leq \frac{1}{2L}$.  Rearranging the inequality and taking expectations with respect to the history of all random variables yield
$$\frac{\eta_t}{4}\mathbb{E}\|\nabla F(x_{t-1})\|^2 \leq \mathbb{E}[F(\widetilde{x}_{t-1}) - F(\widetilde{x}_t)] + \frac{\eta_t^2L \mathcal{V}}{2P} + \frac{\eta_t L^2}{2}\mathbb{E}\|m_{t-1}\|^2 -  \frac{\eta_t}{4}\|\nabla F(x_{t-1})\|^2 .$$

Finally, since $\widetilde{x}_0 = x_0$ and $F(\widetilde{x}_{T}) \geq F(x_*)$,  summing this inequality from $t=1$ to $t=T$ and dividing the result by $\sum_{t=1}^T\eta_t$ give the desired results. 
\end{proof}

For nonconvex objectives, we can derive the following proposition. 
\begin{prop}[General nonconvex]\label{prop: s_sgd_ef_nonconvex_bound}
Suppose that Assumptions \ref{assump: sol_existence}, \ref{assump: smoothness} and \ref{assump: bounded_variance} hold. Assume that $\eta_t = \eta \leq 1/(2L)$. Then S-SGD-EF satisfies 
\begin{align*}
\mathbb{E}\|\nabla F(x_{\mathrm{out}})\|^2 \leq \Theta\left(\frac{F(x_{\mathrm{in}}) - F(x_*)}{\eta T} + \frac{\eta L\mathcal{V}}{P} + \frac{L^2}{T} \sum_{t=1}^T\mathbb{E}\|m_{t-1}\|^2 - \frac{1}{T}\sum_{t=1}^T \mathbb{E}\|\nabla F(x_{T-1})\|^2\right), \end{align*}
where $x_{\mathrm{out}} = x_{\hat t-1}$ and $\hat t \sim [T]$ with probability $\{1/T\}_{t=1}^T$.
\end{prop}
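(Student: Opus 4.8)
The plan is to run the standard ``virtual (uncompressed) iterate'' argument, adapted to error feedback. Introduce the shadow sequence $\widetilde{x}_t = \widetilde{x}_{t-1} - \eta_t \frac{1}{P}\sum_{p=1}^P \nabla f_{i_{t,p}}(x_{t-1})$ with $\widetilde{x}_0 = x_0$, i.e. the genuine parallel SGD trajectory driven by the same stochastic gradients but without any sparsification. The first thing I would record, exactly as in the telescoping identity behind Lemma \ref{lem: s_sgd_ef_cum_error_equality}, is that $x_{t-1} - \widetilde{x}_{t-1} = m_{t-1}$: the gap between the algorithm's iterate and the virtual one equals the averaged cumulative compression error. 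This is what lets every ``error'' term in the final bound be phrased through $\|m_{t-1}\|^2$.

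First I would apply the $L$-smoothness descent lemma to $\widetilde{x}_t$, giving $F(\widetilde{x}_t) \le F(\widetilde{x}_{t-1}) + \langle \nabla F(\widetilde{x}_{t-1}), \widetilde{x}_t - \widetilde{x}_{t-1}\rangle + \frac{L}{2}\|\widetilde{x}_t - \widetilde{x}_{t-1}\|^2$, and then split $\nabla F(\widetilde{x}_{t-1}) = \nabla F(x_{t-1}) + \bigl(\nabla F(\widetilde{x}_{t-1}) - \nabla F(x_{t-1})\bigr)$ so the leading inner product is evaluated at $x_{t-1}$ (the point where the stochastic gradient actually lives), while the residual is controlled by $\|\nabla F(\widetilde{x}_{t-1}) - \nabla F(x_{t-1})\| \le L\|m_{t-1}\|$ via smoothness. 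Next I would take the conditional expectation over $\{i_{t,p}\}_{p}$ given the past: unbiasedness of $\frac{1}{P}\sum_p \nabla f_{i_{t,p}}(x_{t-1})$ turns the leading term into $-\eta_t\|\nabla F(x_{t-1})\|^2$, the squared-norm term splits into $\|\nabla F(x_{t-1})\|^2$ plus the variance, which Assumption \ref{assump: bounded_variance} bounds by $\mathcal{V}/P$ by averaging over $P$ independent nodes, and Young's inequality on the residual cross term produces $\tfrac{\eta_t}{2}\|\nabla F(x_{t-1})\|^2 + \tfrac{\eta_t L^2}{2}\|m_{t-1}\|^2$.

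Collecting terms and using $\eta_t \le 1/(2L)$ twice — once to make $\frac{\eta_t^2 L}{2}\le \frac{\eta_t}{4}$ in the variance-weighting step and once to keep the net coefficient of $\|\nabla F(x_{t-1})\|^2$ at most $-\eta_t/4$ — I would arrive at the one-step bound $\frac{\eta_t}{4}\mathbb{E}\|\nabla F(x_{t-1})\|^2 \le \mathbb{E}[F(\widetilde{x}_{t-1}) - F(\widetilde{x}_t)] + \frac{\eta_t^2 L\mathcal{V}}{2P} + \frac{\eta_t L^2}{2}\mathbb{E}\|m_{t-1}\|^2 - \frac{\eta_t}{4}\mathbb{E}\|\nabla F(x_{t-1})\|^2$, deliberately retaining the spare negative gradient term on the right (it is later cancelled against Proposition \ref{prop: s_sgd_ef_cum_error_bound}). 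Summing over $t=1,\dots,T$, telescoping with $\widetilde{x}_0 = x_0 = x_{\mathrm{in}}$ and $F(\widetilde{x}_T)\ge F(x_*)$, and dividing by $\sum_t \eta_t = \eta T$ gives the stated bound with $x_{\mathrm{out}} = x_{\hat t-1}$, $\hat t$ uniform on $[T]$.

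The main obstacle will be the ``wrong-point gradient'' bookkeeping: the descent happens on $\widetilde{x}_t$ but both the stochastic direction and the quantity we want to bound sit at $x_{t-1}$, so the cross term between $\nabla F(x_{t-1}) - \nabla F(\widetilde{x}_{t-1})$ and the uncompressed stochastic step must be split so that the surviving positive multiple of $\|\nabla F(x_{t-1})\|^2$ stays within the negative budget supplied by the $-\frac{\eta_t}{2}(1-\eta_t L)\|\nabla F(x_{t-1})\|^2$ term, while still leaving a spare $-\frac{\eta_t}{4}\|\nabla F(x_{t-1})\|^2$ for the subsequent combination. Everything else is routine Cauchy–Schwarz/Young manipulation; the only place any care is needed is choosing the Young parameters and exploiting $\eta_t\le 1/(2L)$ tightly enough.
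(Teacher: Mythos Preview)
Your proposal is correct and follows essentially the same argument as the paper: introduce the virtual uncompressed iterate $\widetilde{x}_t$, use $x_{t-1}-\widetilde{x}_{t-1}=m_{t-1}$, apply the $L$-smoothness descent lemma at $\widetilde{x}_t$, split $\nabla F(\widetilde{x}_{t-1})$ to evaluate the inner product at $x_{t-1}$, bound the residual via Young's inequality and $L$-smoothness, take expectations, and telescope. The bookkeeping you describe (splitting constants so that a spare negative $\|\nabla F(x_{t-1})\|^2$ term survives, then absorbing it into the $\Theta$) is exactly what the paper does.
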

Combining Proposition \ref{prop: s_sgd_ef_nonconvex_bound} with Proposition \ref{prop: s_sgd_ef_cum_error_bound} yields the following theorem.
\begin{thm}[General nonconvex]\label{thm: s_sgd_ef_general_nonconvex}
Suppose that Assumptions \ref{assump: sol_existence}, \ref{assump: smoothness} and \ref{assump: bounded_variance} hold. 
Let $\gamma$ be the same one in Theorem \ref{thm: s_sgd_ef_strongly_convex}. Then the iteration complexity $T$ of S-SGD-EF with appropriate $\eta_t = \eta$ to acheive $\mathbb{E}\|\nabla F(x_{\mathrm{out}})\|^2 \leq \varepsilon$ is 
\begin{align*}
     O\left( \frac{L\Delta}{\varepsilon} + \frac{\mathcal{V}}{P}\frac{L\Delta}{\varepsilon^2} + \frac{d}{k} + \frac{d}{k\sqrt{P}} \left(\frac{L\Delta}{\varepsilon} + \frac{L\sqrt{\mathcal{V}}\Delta}{\varepsilon^{\frac{3}{2}}}\right)\right),
\end{align*}
where $\Delta = F(x_\mathrm{in}) - F(x_*)$ and $x_{\mathrm{out}}$ is defined in Proposition \ref{prop: s_sgd_ef_nonconvex_bound}.
\end{thm}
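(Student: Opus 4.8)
The plan is to combine the per-run descent inequality of Proposition \ref{prop: s_sgd_ef_nonconvex_bound} with the cumulative-error bound of Proposition \ref{prop: s_sgd_ef_cum_error_bound}, restrict the constant step size $\eta$ so that the two gradient-norm sums cancel, and then tune $\eta$ and $T$ to read off the iteration complexity. First I would substitute $\mathbb{E}\|m_t\|^2 \leq \Theta(\sum_{t'=1}^t \frac{\eta^2 d}{kP}(1-\gamma)^{t-t'}(\mathcal{V} + \mathbb{E}\|\nabla F(x_{t'-1})\|^2))$ into the term $\frac{L^2}{T}\sum_{t=1}^T \mathbb{E}\|m_{t-1}\|^2$ of Proposition \ref{prop: s_sgd_ef_nonconvex_bound}. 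The resulting double sum $\sum_{t}\sum_{t'\leq t}(1-\gamma)^{t-t'}c_{t'}$ is handled exactly as in Lemma \ref{lem: s_sgd_ef_geometric_sum}, but now with trivial outer weights (the analogue of $r_2=0$): swapping the summation order and summing the geometric series in $t$ produces a factor $1/\gamma = \Theta(d/k)$, so that, using $\gamma = \Theta(k/d)$,
\[
\frac{L^2}{T}\sum_{t=1}^T \mathbb{E}\|m_{t-1}\|^2 \leq \Theta\!\left(\frac{\eta^2 L^2 d^2 \mathcal{V}}{k^2 P}\right) + \Theta\!\left(\frac{\eta^2 L^2 d^2}{k^2 P}\right)\frac{1}{T}\sum_{t=1}^T \mathbb{E}\|\nabla F(x_{t-1})\|^2 .
\]

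The key step is then to choose $\eta$ small enough that the positive $\frac1T\sum_t \mathbb{E}\|\nabla F(x_{t-1})\|^2$ produced above is dominated by the negative term $-\frac1T\sum_t \mathbb{E}\|\nabla F(x_{t-1})\|^2$ already present in Proposition \ref{prop: s_sgd_ef_nonconvex_bound}; this forces $\eta \leq \Theta(k\sqrt{P}/(dL))$, on top of the standing requirement $\eta \leq 1/(2L)$. Under this restriction the gradient-norm sums cancel and one is left with the clean bound
\[
\mathbb{E}\|\nabla F(x_{\mathrm{out}})\|^2 \leq \Theta\!\left(\frac{\Delta}{\eta T} + \frac{\eta L\mathcal{V}}{P} + \frac{\eta^2 L^2 d^2 \mathcal{V}}{k^2 P}\right),
\]
where $\Delta = F(x_{\mathrm{in}}) - F(x_*)$.

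Finally I would optimize over $\eta$ and $T$. To force the two step-size-dependent terms below $\varepsilon$ it suffices to take $\eta \leq \Theta(P\varepsilon/(L\mathcal{V}))$ and $\eta \leq \Theta(k\sqrt{P}\sqrt{\varepsilon}/(dL\sqrt{\mathcal{V}}))$; setting $\eta$ to the minimum of these and of $\Theta(1/L \wedge k\sqrt{P}/(dL))$, and then $T = \Theta(\Delta/(\eta\varepsilon))$, makes the first term at most $\varepsilon$ as well. Substituting the value of $\eta$ into $T = \Theta(\Delta/(\eta\varepsilon))$ splits $T$ into the four summands $\frac{L\Delta}{\varepsilon}$, $\frac{\mathcal{V}}{P}\frac{L\Delta}{\varepsilon^2}$, $\frac{d}{k\sqrt{P}}\frac{L\Delta}{\varepsilon}$ and $\frac{d}{k\sqrt{P}}\frac{L\sqrt{\mathcal{V}}\Delta}{\varepsilon^{3/2}}$, and adding the $d/k$ that comes from requiring $T$ to exceed a constant multiple of the error-feedback ``mixing time'' $1/\gamma = \Theta(d/k)$ yields the stated complexity.

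The routine parts are the descent telescoping (already packaged in Proposition \ref{prop: s_sgd_ef_nonconvex_bound}) and the final tuning algebra. The one delicate point — and the step I expect to need the most care — is the cancellation in the second paragraph: the positive gradient-norm contribution coming from $\sum_t \mathbb{E}\|m_{t-1}\|^2$ must be absorbed with room to spare, and getting the constants in the geometric-sum swap right so that this absorption actually goes through is precisely what pins down $\eta = O(k\sqrt{P}/(dL))$ and hence produces the $d/(k\sqrt{P})$ factors distinguishing this bound from vanilla SGD.
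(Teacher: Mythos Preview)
Your proposal is correct and follows essentially the same route as the paper: the paper's proof is a one-paragraph sketch that sets $\eta = \Theta(1/L \wedge \gamma\sqrt{P}/L \wedge P\varepsilon/(L\mathcal{V}) \wedge \gamma\sqrt{P\varepsilon}/(L\sqrt{\mathcal{V}}))$, invokes Proposition~\ref{prop: s_sgd_ef_nonconvex_bound} ``similar to the proof of Theorem~\ref{thm: s_sgd_ef_strongly_convex}'' to obtain exactly your clean bound $\Theta(\Delta/(\eta T) + \eta L\mathcal{V}/P + \eta^2 L^2 d^2\mathcal{V}/(k^2 P))$, and then sets $T = \Theta(\Delta/(\eta\varepsilon))$. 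Your write-up is in fact more explicit than the paper's about the geometric-sum swap and the cancellation that forces $\eta \le \Theta(k\sqrt{P}/(dL))$, and your four constraints on $\eta$ coincide (after $\gamma = \Theta(k/d)$) with the paper's choice.
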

Similar to convex cases, S-SGD-EF asymptotically achieves the same rate as non-sparsified SGD.

\begin{pr_s_sgd_ef_general_nonconvex_thm}
Let $\eta_t = \eta = \Theta(1/L \wedge \gamma \sqrt{P}/L \wedge P\varepsilon/(L\mathcal{V}) \wedge L\mathcal{V}^{-1/2}\gamma\sqrt{P\varepsilon})$ be the same one defined in the proof of Theorem \ref{thm: s_sgd_ef_strongly_convex}.
From Proposition \ref{prop: s_sgd_ef_nonconvex_bound}, similar to the proof of Theorem \ref{thm: s_sgd_ef_strongly_convex}, we have
\begin{align*}
    \mathbb{E}\|\nabla F(x_{\mathrm{out}})\|^2 \leq \Theta\left(\frac{F(x_{0}) - F(x_*)}{\eta T} + \frac{\eta L\mathcal{V}}{P} + \frac{\eta^2L^2d^2\mathcal{V}}{k^2P}\right).
\end{align*}
Set $T = \Theta(\Delta/(\eta\varepsilon))$. Substituting the definition of $\eta$, it can be easily seen that $\mathbb{E}\|\nabla F(x_{\mathrm{out}})\|^2 \leq \varepsilon$ and we obtain the desired result. \qed
\end{pr_s_sgd_ef_general_nonconvex_thm}

\section{Analysis of S-SNAG-EF}\label{sec: analysis_s_snag_ef}
\subsection{Analysis of $\mathbb{E}\|m_t\|^2$}
Remind that for $t \in [T]$,
$$\begin{cases}
    y_t &= x_{t-1} - \eta_t \frac{1}{P}\sum_{p=1}^P \bar g_{t, p}^{(y)}, \\
    z_t &= (1 - \beta_t)z_{t-1} + \beta_t x_{t-1} - \lambda_t \frac{1}{P}\sum_{p=1}^P \bar g_{t, p}^{(z)}, \\
    x_t &= (1-\alpha_{t})y_{t} + \alpha_{t} z_{t}. \\
\end{cases}$$
Let
$$\begin{cases}
    \widetilde y_t& = \widetilde x_{t-1} - \eta_t \frac{1}{P}\sum_{p=1}^P \nabla f_{i_{t, p}}(x_{t-1}), \\
    \widetilde z_t& = (1 - \beta_t)\widetilde z_{t-1} + \beta_t \widetilde{x}_{t-1} - \lambda_t \frac{1}{P}\sum_{p=1}^P \nabla f_{i_{t, p}}(x_{t-1}), \\
    \widetilde{x}_{t}& = (1-\alpha_{t})\widetilde{y}_{t} + \alpha_{t} \widetilde{z}_{t}, \\
\end{cases}$$
where $\widetilde y_0 = y_0$, $\widetilde z_0 = z_0$ and $\widetilde x_0 = x_0$
\begin{lem}
\label{lem: s_snag_ef_cum_error_relations}
For $t \in [T]$, let $m_t = (1/P)\sum_{p=1}^P m_{t, p}$, $m_t^{(y)} = (1/P)\sum_{p=1}^P m_{t, p}^{(y)}$ and $m_t^{(z)} = (1/P)\sum_{p=1}^P m_{t, p}^{(z)}$. Then, it holds that 
\begin{align*}
\begin{cases} m_t^{(y)} = y_t - \widetilde{y}_t, \\ m_t^{(z)} = z_t - \widetilde{z}_t, \\ m_t = x_t - \widetilde{x}_t. 
\end{cases}
\end{align*}
\end{lem}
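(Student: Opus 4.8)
The plan is to prove the three identities simultaneously by induction on $t$, using the single structural fact that one iteration of \texttt{OneIterNAG} with parameters $(\alpha_t,\beta_t)$ — call the resulting map $G_t$ — is \emph{linear} in its first five arguments (each of its three lines is a homogeneous linear combination, so it carries no additive constant). With this notation, one step of the algorithm reads $(x_t,y_t,z_t)=G_t\big(x_{t-1},y_{t-1},z_{t-1};\,\eta_t\hat g_t^{(y)},\,\lambda_t\hat g_t^{(z)}\big)$ with $\hat g_t^{(\cdot)}=\frac1P\sum_p\bar g_{t,p}^{(\cdot)}$; the auxiliary ``exact'' run defined just before the lemma reads $(\widetilde x_t,\widetilde y_t,\widetilde z_t)=G_t\big(\widetilde x_{t-1},\widetilde y_{t-1},\widetilde z_{t-1};\,\eta_t\hat\nabla_t,\,\lambda_t\hat\nabla_t\big)$ with $\hat\nabla_t=\frac1P\sum_p\nabla f_{i_{t,p}}(x_{t-1})$; and the per-processor error recursion of line~7 reads $(m_{t,p},m_{t,p}^{(y)},m_{t,p}^{(z)})=G_t\big(m_{t-1,p},m_{t-1,p}^{(y)},m_{t-1,p}^{(z)};\,\Delta_y^t,\,\Delta_z^t\big)$ with $\Delta_y^t=\eta_t(\bar g_{t,p}^{(y)}-\nabla f_{i_{t,p}}(x_{t-1}))$ and $\Delta_z^t=\lambda_t(\bar g_{t,p}^{(z)}-\nabla f_{i_{t,p}}(x_{t-1}))$.

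The base case $t=0$ is immediate: $m_0=m_0^{(y)}=m_0^{(z)}=0$ by initialization, and $x_0=\widetilde x_0$, $y_0=\widetilde y_0$, $z_0=\widetilde z_0$ since all are set to $x_{\mathrm{in}}$. For the inductive step I would first average the error recursion over $p\in[P]$: since $G_t$ is linear, averaging commutes with it and $\frac1P\sum_p\Delta_y^t=\eta_t(\hat g_t^{(y)}-\hat\nabla_t)$, $\frac1P\sum_p\Delta_z^t=\lambda_t(\hat g_t^{(z)}-\hat\nabla_t)$, so $(m_t,m_t^{(y)},m_t^{(z)})=G_t\big(m_{t-1},m_{t-1}^{(y)},m_{t-1}^{(z)};\,\eta_t(\hat g_t^{(y)}-\hat\nabla_t),\,\lambda_t(\hat g_t^{(z)}-\hat\nabla_t)\big)$. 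Next, using linearity again in the form $G_t(p_1)-G_t(p_2)=G_t(p_1-p_2)$, subtract the exact run from the true run to get $(x_t-\widetilde x_t,\,y_t-\widetilde y_t,\,z_t-\widetilde z_t)=G_t\big(x_{t-1}-\widetilde x_{t-1},\,y_{t-1}-\widetilde y_{t-1},\,z_{t-1}-\widetilde z_{t-1};\,\eta_t(\hat g_t^{(y)}-\hat\nabla_t),\,\lambda_t(\hat g_t^{(z)}-\hat\nabla_t)\big)$. Invoking the induction hypothesis to replace the first three arguments by $m_{t-1},m_{t-1}^{(y)},m_{t-1}^{(z)}$ and comparing with the averaged error recursion displayed above yields $(x_t-\widetilde x_t,y_t-\widetilde y_t,z_t-\widetilde z_t)=(m_t,m_t^{(y)},m_t^{(z)})$, closing the induction. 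If one prefers an argument-free statement, the same computation can be carried out slot by slot, matching $m_t^{(y)}$ against $y_t-\widetilde y_t$, then $m_t^{(z)}$ against $z_t-\widetilde z_t$, then $m_t=(1-\alpha_t)m_t^{(y)}+\alpha_t m_t^{(z)}$ against $x_t-\widetilde x_t$.

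There is no analytic content here, so the only thing to watch is the bookkeeping: one must keep straight which of the three output slots of \texttt{OneIterNAG} holds $m_{t,p}$ versus $m_{t,p}^{(y)},m_{t,p}^{(z)}$, verify that each per-processor feedback increment $\Delta_y^t$ (resp.\ $\Delta_z^t$) is exactly the gap between the compressed driving term $\eta_t\bar g_{t,p}^{(y)}$ (resp.\ $\lambda_t\bar g_{t,p}^{(z)}$) and the exact one $\eta_t\nabla f_{i_{t,p}}(x_{t-1})$ (resp.\ $\lambda_t\nabla f_{i_{t,p}}(x_{t-1})$), and note that averaging over $p$ passes through $G_t$ precisely because $G_t$ is linear for fixed $(\alpha_t,\beta_t)$. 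The hardest part, such as it is, is merely to set up the auxiliary ``exact'' trajectory $(\widetilde x_t,\widetilde y_t,\widetilde z_t)$ so that it is manifestly driven by the very same gradient samples $\nabla f_{i_{t,p}}(x_{t-1})$ as the true trajectory; once that is fixed, the identities telescope mechanically.
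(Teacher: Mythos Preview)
Your proof is correct and follows essentially the same approach as the paper: both argue by induction on $t$, verifying that the averaged error recursion coincides with the difference of the actual and ``exact'' trajectories at each step. Your presentation is slightly more abstract---you package the three updates as a single linear map $G_t$ and invoke linearity once, whereas the paper unrolls each of the three coordinates by hand---but the underlying computation is identical.
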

\begin{proof}
We show the claim by mathematical induction. For $t = 1$, $m_1^{(y)} = m_0 + \eta_1 (1/P)\sum_{p=1}^P (\nabla f_{i, p}(x_0) - \bar g_{1, p}^{(y)}) = y_1 - \widetilde{y}_1$. Similarly, $m_1^{(z)} = (1 - \beta_1)m_0^{(z)} + \beta_1 m_0 + \lambda_1(1/P)\sum_{p=1}^P (\nabla f_{i, p}(x_0) - \bar g_{1, p}^{(z)}) = z_1 - \widetilde z_1$. Also, $m_1 = (1 - \alpha_1)m_1^{(y)} + \alpha_1 m_1^{(z)} = (1 - \alpha_1)(y_1 - \widetilde y_1) + \alpha_1 (z_1 - \widetilde z_1) = x_1 - \widetilde{x}_1$. Hence the statements hold for $t=1$. Suppose that the statements hold for $t = t-1$. Then, $m_{t}^{(y)} = m_{t-1} + \eta_t (1/P)\sum_{p=1}^P (\nabla f_{i, p}(x_{t-1}) - \bar g_{t, p}^{(y)}) = x_{t-1} - \widetilde{x}_{t-1} + \eta_t (1/P)\sum_{p=1}^P (\nabla f_{i, p}(x_{t-1}) - \bar g_{t, p}^{(y)}) = y_t - \widetilde{y}_t$. Similarly, $m_t^{(z)} = (1 - \beta_t)m_{t-1}^{(z)} + \beta_t m_{t-1}  + \lambda_t(1/P)\sum_{p=1}^P (\nabla f_{i, p}(x_{t-1}) - \bar g_{t, p}^{(z)})= z_t - \widetilde z_t$. Also, 
$m_t = (1 - \alpha_t)m_t^{(y)} + \alpha_t m_t^{(z)} = (1-\alpha_t)(y_t - \widetilde{y}_t) + \alpha_t(z_t - \widetilde{z}_t) = x_t - \widetilde{x}_t$. Therefore the statements hold for any $t \in [T]$.
\end{proof}

\begin{lem}
\label{lem: s_snag_ef_cum_error_expectation}
For $t \in [T]$ and $p_1 \neq p_2 \in [P]$, 
$$\mathbb{E}\langle m_{t, p_1}, m_{t, p_2}\rangle = 0.$$
Here the expectations are taken with respect to the all random variables.
\end{lem}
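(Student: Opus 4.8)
The plan is to follow the same route as the proof of Lemma~\ref{lem: s_sgd_ef_cum_error_expectaion}: condition on the past, use the unbiasedness of $\mathrm{RandComp}$ to evaluate one-step conditional expectations, and then recurse back to the zero initialization. The only genuinely new ingredient is that the \texttt{OneIterNAG} update couples the three per-node error sequences $m_{t,p}$, $m_{t,p}^{(y)}$, $m_{t,p}^{(z)}$, so the bare statement $\mathbb{E}\langle m_{t,p_1},m_{t,p_2}\rangle=0$ will not propagate by itself; I will instead prove the stronger claim that \emph{all} cross-node inner products among these three sequences have zero expectation, and then specialize.

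First I would fix the filtration $\mathcal{F}_{t-1}=\sigma\bigl(\{i_{s,p},J^{(y)}_{s,p},J^{(z)}_{s,p}:1\le s\le t-1,\ p\in[P]\}\bigr)$, where $J^{(y)}_{s,p},J^{(z)}_{s,p}$ are the coordinate subsets drawn by the two $\mathrm{RandComp}$ calls on node $p$ at step $s$; then $x_{t-1}$ and all of $m_{t-1,p},m_{t-1,p}^{(y)},m_{t-1,p}^{(z)}$ are $\mathcal{F}_{t-1}$-measurable, while the step-$t$ variables $(i_{t,p},J^{(y)}_{t,p},J^{(z)}_{t,p})$, $p\in[P]$, are mutually independent and independent of $\mathcal{F}_{t-1}$. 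Since $\mathbb{E}[\bar g_{t,p}^{(y)}\mid\mathcal{F}_{t-1},i_{t,p}]=g_{t,p}^{(y)}$ and $\mathbb{E}[\bar g_{t,p}^{(z)}\mid\mathcal{F}_{t-1},i_{t,p}]=g_{t,p}^{(z)}$ by unbiasedness of $\mathrm{RandComp}$, substituting the definitions of $\Delta^t_y,\Delta^t_z$ and $g^{(y)}_{t,p},g^{(z)}_{t,p}$ into \texttt{OneIterNAG} makes the $\nabla f_{i,p}(x_{t-1})$ terms cancel, leaving
\begin{align*}
\mathbb{E}[m_{t,p}^{(y)}\mid\mathcal{F}_{t-1}] &= m_{t-1,p}^{(y)}-\gamma m_{t-1,p},\\
\mathbb{E}[m_{t,p}^{(z)}\mid\mathcal{F}_{t-1}] &= (1-\gamma)\bigl((1-\beta_t)m_{t-1,p}^{(z)}+\beta_t m_{t-1,p}\bigr),\\
\mathbb{E}[m_{t,p}\mid\mathcal{F}_{t-1}] &= (1-\alpha_t)\,\mathbb{E}[m_{t,p}^{(y)}\mid\mathcal{F}_{t-1}]+\alpha_t\,\mathbb{E}[m_{t,p}^{(z)}\mid\mathcal{F}_{t-1}].
\end{align*}
The key point is that each of these is a deterministic-coefficient linear combination of \emph{node-$p$} quantities only.

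Next I would induct on $t$ to prove $\mathbb{E}\langle u_{t,p_1},v_{t,p_2}\rangle=0$ for every $p_1\neq p_2$ and every $u,v$ each chosen from $\{m,m^{(y)},m^{(z)}\}$; the base case $t=0$ holds since $m_{0,p}=m_{0,p}^{(y)}=m_{0,p}^{(z)}=0$. For the step, because $(i_{t,p_1},J^{(y)}_{t,p_1},J^{(z)}_{t,p_1})$ and $(i_{t,p_2},J^{(y)}_{t,p_2},J^{(z)}_{t,p_2})$ are independent given $\mathcal{F}_{t-1}$, the vectors $u_{t,p_1}$ and $v_{t,p_2}$ are conditionally independent given $\mathcal{F}_{t-1}$, so $\mathbb{E}[\langle u_{t,p_1},v_{t,p_2}\rangle\mid\mathcal{F}_{t-1}]=\langle\mathbb{E}[u_{t,p_1}\mid\mathcal{F}_{t-1}],\mathbb{E}[v_{t,p_2}\mid\mathcal{F}_{t-1}]\rangle$; substituting the formulas above and taking the outer expectation expresses this, by bilinearity, as a finite deterministic-coefficient combination of the quantities $\mathbb{E}\langle u'_{t-1,p_1},v'_{t-1,p_2}\rangle$ with $u',v'\in\{m,m^{(y)},m^{(z)}\}$, each of which is zero by the inductive hypothesis. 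Taking $u=v=m$ recovers the stated lemma.

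The main obstacle is conceptual rather than computational: one must recognize that the induction has to carry the whole family of cross-node inner products (not just the one in the statement), since \texttt{OneIterNAG} blends $m$, $m^{(y)}$ and $m^{(z)}$ at every step, and one must set up $\mathcal{F}_{t-1}$ so that the step-$t$ randomness of distinct nodes is genuinely conditionally independent of each other and of the past. After that, the cancellation of the error-feedback correction term is a one-liner and the recursion collapses to the zero initial condition exactly as in Lemma~\ref{lem: s_sgd_ef_cum_error_expectaion}.
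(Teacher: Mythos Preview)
Your proposal is correct and follows essentially the same route as the paper: strengthen the inductive hypothesis to cover the cross-node inner products among all of the per-node error sequences $m^{(y)},m^{(z)}$ (and, in your version, also $m$), use unbiasedness of $\mathrm{RandComp}$ and the per-node independence of the step-$t$ randomness to reduce each expected inner product to a linear combination of the previous-step ones, and recurse to the zero initialization. The paper carries only the three pairs among $m^{(y)},m^{(z)}$ and recovers $m$ at the end via $m_{t,p}=(1-\alpha_t)m_{t,p}^{(y)}+\alpha_t m_{t,p}^{(z)}$, while you carry all nine pairs; this is a harmless over-strengthening. One minor remark: your displayed formula $\mathbb{E}[m_{t,p}^{(y)}\mid\mathcal{F}_{t-1}]=m_{t-1,p}^{(y)}-\gamma m_{t-1,p}$ follows \texttt{OneIterNAG} as literally written, whereas the paper's analysis uses $(1-\gamma)m_{t-1,p}$ (reflecting the intended update $y\leftarrow x-\Delta_y$); since your argument only uses that the conditional expectation is a deterministic-coefficient linear combination of node-$p$ quantities, this discrepancy is immaterial to the proof.
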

\begin{proof}
We show that $\mathbb{E}\langle m_{t, p_1}^{(y)}, m_{t, p_2}^{(y)}\rangle = 0$, $\mathbb{E}\langle m_{t, p_1}^{(y)}, m_{t, p_2}^{(z)}\rangle = 0$ and $\mathbb{E}\langle m_{t, p_1}^{(z)}, m_{t, p_2}^{(z)}\rangle = 0$ by mathematical induction. For $t=1$, the statements are trivial because of the independence of the random choices of non-sparsified coordinates and $\mathbb{E}[\bar g_{1, p}^{(y)}] = \mathbb{E}[\bar g_{1, p}^{(z)}] = \nabla f_{i, p}(x_0)$ for $p \in [P]$. Suppose that $\mathbb{E}\langle m_{t-1, p_1}^{(y)}, m_{t-1, p_2}^{(y)}\rangle = \mathbb{E}\langle m_{t-1, p_1}^{(y)}, m_{t-1, p_2}^{(z)}\rangle = \mathbb{E}\langle m_{t-1, p_1}^{(z)}, m_{t-1, p_2}^{(z)}\rangle = 0$ hold. 
$\mathbb{E}[m_{t, p_1}^{(y)} | t-1 ] = (1-\gamma)m_{t-1, p_1} = (1-\gamma)((1-\alpha_t) m_{t-1, p_1}^{(y)} + \alpha_t m_{t-1, p_1}^{(z)})$ and hence $\mathbb{E}\langle m_{t, p_1}^{(y)}, m_{t, p_2}^{(y)}\rangle = 0$ by the inductive assumptions. Similarly, we have $\mathbb{E}\langle m_{t, p_1}^{(y)}, m_{t, p_2}^{(z)}\rangle = \mathbb{E}\langle m_{t, p_1}^{(z)}, m_{t, p_2}^{(z)}\rangle = 0$ by the definition $m_{t-1} = (1 - \alpha_{t-1})m_{t-1}^{(y)} + \alpha_{t-1}m_{t-1}^{(z)}$. Hence we have $\mathbb{E}\langle m_{t, p_1}^{(y)}, m_{t, p_2}^{(y)}\rangle = \mathbb{E}\langle m_{t, p_1}^{(y)}, m_{t, p_2}^{(z)}\rangle = \mathbb{E}\langle m_{t, p_1}^{(z)}, m_{t, p_2}^{(z)}\rangle= 0$ for $t \in [T]$. Since $m_{t, p} = (1-\alpha_t)m_{t, p}^{(y)} + \alpha_t m_{t, p}^{(z)}$ for $p \in \{p_1, p_2\}$, we obtain $\mathbb{E}\langle m_{t, p_1}, m_{t, p_2}\rangle = 0$.
\end{proof}

\begin{pr_s_snag_ef_m_t_bound_prop}
Using Lemma \ref{lem: s_snag_ef_cum_error_expectation}, we have
\begin{align*}
    \mathbb{E}\|m_t\|^2 =&\ \mathbb{E}\|x_t - \widetilde{x}_t\|^2 \\
    =&\ \mathbb{E}\|(1-\alpha_t)(y_t - \widetilde{y}_t) + \alpha_t(z_t - \widetilde{z}_t)\|^2 \\
    \leq&\ (1-\alpha_t)^2\left(1 + \frac{\gamma}{2}\right)\mathbb{E}\|y_t - \widetilde{y}_t\|^2 + \frac{2\alpha_t^2}{\gamma} \mathbb{E}\|z_t - \widetilde{z}_t\|^2 \\
    =&\ (1 - \alpha_t)^2\left(1 + \frac{\gamma}{2}\right)\mathbb{E}\|m_{t}^{(y)}\|^2 + \frac{2\alpha_t^2}{\gamma}\mathbb{E}\|m_t^{(z)}\|^2.
\end{align*}
Using the definition of $\bar g_{t, p}^{(y)}$ and Lemma \ref{lem: s_snag_ef_cum_error_expectation}, similar to the proof of Proposition \ref{prop: s_sgd_ef_cum_error_recursive_bound}, we can show that 
\begin{align*}
    \mathbb{E}\|m_t^{(y)}\|^2
    =&\ \mathbb{E}\left\|x_{t-1} - \widetilde{x}_{t-1} + \eta_t\left(  \frac{1}{P}\sum_{p=1}^P\nabla f_{i_{t, p}}(x_{t-1}) - \frac{1}{P}\sum_{p=1}^P \bar g_{t, p}^{(y)} \right)\right\|^2 \\
    =&\ \mathbb{E}\left\|m_{t-1} + \eta_t\left( \frac{1}{P}\sum_{p=1}^P\nabla f_{i_{t, p}}(x_{t-1}) - \frac{1}{P}\sum_{p=1}^P \bar g_{t, p}^{(y)}\right)\right\|^2 \\
    \leq&\ (1-\gamma)\mathbb{E}\|m_{t-1}\|^2 + \Theta\left(\frac{\eta_t^2d}{kP}\left(\mathcal{V} + \mathbb{E}\|\nabla F(x_{t-1})\|^2\right)\right).
\end{align*}

On the other hand, by the definition of $z_t$ and $\widetilde{z}_t$, similarly we have
\begin{align*}
    \mathbb{E}\|m_t^{(z)}\|^2 =&\ \mathbb{E}\left\|(1 - \beta_t)m_{t-1}^{(z)} + \beta_t m_{t-1} + \lambda_t\left(  \frac{1}{P}\sum_{p=1}^P\nabla f_{i_{t, p}}(x_{t-1}) - \frac{1}{P}\sum_{p=1}^P \bar g_{t, p}^{(z)}\right)\right\|^2 \\
    \leq&\ (1 - \gamma)\mathbb{E}\|(1 - \beta_t)m_{t-1}^{(z)} + \beta_t m_{t-1}\|^2 + \Theta\left(\frac{\lambda_t^2d}{kP}\left(\mathcal{V} + \mathbb{E}\|\nabla F(x_{t-1})\|^2\right)\right) \\
    \leq&\ (1 - \gamma)(1 - \beta_t)\mathbb{E}\|m_{t-1}^{(z)}\|^2 + (1 - \gamma)\beta_t \mathbb{E}\|m_{t-1}\|^2 + \Theta\left(\frac{\lambda_t^2d}{kP}\left(\mathcal{V} + \mathbb{E}\|\nabla F(x_{t-1})\|^2\right)\right).
\end{align*}

Combining these inequalities, we get
\begin{align*}
    &\mathbb{E}\|m_t\|^2 + c_t\mathbb{E}\|m_t^{(z)}\|^2 \\
    \leq&\ ((1 - \gamma)(1 + \gamma/2)(1 -\alpha_t) + (1 - \gamma)(c_t + 2\alpha_t^2/\gamma)\beta_t)\mathbb{E}\|m_{t-1}\|^2 + (1 - \gamma)(c_t+2\alpha_t^2/\gamma)(1 - \beta_t)\mathbb{E}\|m_{t-1}^{(z)}\|^2 \\
    &+ \Theta\left(\frac{(\eta_t^2 + (c_t + \alpha_t^2/\gamma)\lambda_t^2)d}{kP}\left(\mathcal{V} + \mathbb{E}\|\nabla F(x_{t-1})\|^2\right)\right) \\
    \leq&\ ((1 - \gamma/2) + (c_t + 2\alpha_t^2/\gamma)\beta_t)\mathbb{E}\|m_{t-1}\|^2 + (1 - \gamma)(c_t + 2\alpha_t^2/\gamma)\mathbb{E}\|m_{t-1}^{(z)}\|^2 \\
    &+ \Theta\left(\frac{(\eta_t^2 + (c_t + \alpha_t^2/\gamma)\lambda_t^2)d}{kP}\left(\mathcal{V} + \mathbb{E}\|\nabla F(x_{t-1})\|^2\right)\right).
\end{align*}
for any positive sequence $\{c_t\}_{t=1}^T$.
Hence, if we set $c_t =  4\alpha_t^2/\gamma^2$ ($\leq 4\alpha_{t-1}^2/\gamma^2 = c_{t-1}$), $\beta_t \leq \gamma/(4(c_t + 2\alpha_t^2/\gamma) ) (\leq \Theta(\gamma^3/\alpha_t^2))$, we obtain
\begin{align*}
    &\mathbb{E}\|m_t\|^2 + c_t\mathbb{E}\|m_t^{(z)}\|^2 \\
    \leq&\ (1 - \gamma/4)(\mathbb{E}\|m_{t-1}\|^2 + c_{t-1}\mathbb{E}\|m_{t-1}^{(z)}\|^2) + \Theta\left(\frac{(\eta_t^2 + (\alpha_t^2/\gamma^2)\lambda_t^2)d}{kP}\left(\mathcal{V} + \mathbb{E}\|\nabla F(x_{t-1})\|^2\right)\right).
\end{align*}

Recursively using this inequality, we have
\begin{align*}
    \mathbb{E}\|m_t\|^2 + c_t\mathbb{E}\|m_t^{(z)}\|^2 \leq \Theta\left(\sum_{t'=1}^t \frac{(\eta_{t'}^2 + (\alpha_t^2/\gamma^2)\lambda_{t'}^2)d}{kP}(1 - \gamma/4)^{t - t'}(\mathcal{V} + \mathbb{E}\|\nabla F(x_{t'-1})\|^2\right).
\end{align*} \qed
\end{pr_s_snag_ef_m_t_bound_prop}

\subsection{Analysis for Convex Cases}
\begin{lem}\label{lem: s_snag_ef_object_bound}
Suppose that Assumptions \ref{assump: smoothness} and \ref{assump: strong_convexity} hold. For $x \in \mathbb{R}^d$, it follows that
\begin{align*}
    \mathbb{E}[F(\widetilde{y}_t)]\leq&\ F(x)  - \eta_t\left(\frac{3}{4} - \frac{\eta_t L}{2}\right)\|\nabla F(x_{t-1})\|^2 + \frac{\eta_t^2 L \mathcal{V}}{2P} +  \eta_tL^2\|x_{t-1} - \widetilde{x}_{t-1}\|^2 \\
    &- \langle \nabla F(x_{t-1}), x - \widetilde{x}_{t-1}\rangle - \frac{\mu}{2}\|\widetilde{x}_{t-1} - x\|^2 - \langle \nabla F(\widetilde{x}_{t-1}) - \nabla F(x_{t-1}), x - \widetilde{x}_{t-1}\rangle. 
\end{align*}
\end{lem}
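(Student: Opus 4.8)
The plan is to treat $\widetilde{y}_t = \widetilde{x}_{t-1} - \eta_t \frac{1}{P}\sum_{p=1}^P \nabla f_{i_{t,p}}(x_{t-1})$ as a single stochastic gradient step launched from $\widetilde{x}_{t-1}$, except that the stochastic gradient is evaluated at the aggregated point $x_{t-1}$ rather than at $\widetilde{x}_{t-1}$; reconciling this mismatch is the only nonroutine part. Writing $\hat{g}_t = \frac{1}{P}\sum_{p=1}^P \nabla f_{i_{t,p}}(x_{t-1})$, I would first apply $L$-smoothness of $F$ (Assumption \ref{assump: smoothness}) at $\widetilde{x}_{t-1}$ together with $\widetilde{y}_t - \widetilde{x}_{t-1} = -\eta_t \hat{g}_t$ to get
\[
F(\widetilde{y}_t) \le F(\widetilde{x}_{t-1}) - \eta_t \langle \nabla F(\widetilde{x}_{t-1}), \hat{g}_t\rangle + \frac{\eta_t^2 L}{2}\|\hat{g}_t\|^2 ,
\]
and then take the expectation over the fresh samples $i_{t,1},\dots,i_{t,P}$ conditioned on the history up to iteration $t-1$ (so that $x_{t-1}$ and $\widetilde{x}_{t-1}$ are fixed). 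Since $\mathbb{E}[\hat{g}_t] = \nabla F(x_{t-1})$ and $\mathbb{E}\|\hat{g}_t\|^2 = \mathbb{E}\|\hat{g}_t - \nabla F(x_{t-1})\|^2 + \|\nabla F(x_{t-1})\|^2 \le \mathcal{V}/P + \|\nabla F(x_{t-1})\|^2$ by Assumption \ref{assump: bounded_variance} (the same variance estimate used in the proofs of Propositions \ref{prop: s_sgd_ef_convex_bound} and \ref{prop: s_sgd_ef_nonconvex_bound}), this yields
\[
\mathbb{E}[F(\widetilde{y}_t)] \le F(\widetilde{x}_{t-1}) - \eta_t\langle \nabla F(\widetilde{x}_{t-1}), \nabla F(x_{t-1})\rangle + \frac{\eta_t^2 L \mathcal{V}}{2P} + \frac{\eta_t^2 L}{2}\|\nabla F(x_{t-1})\|^2 .
\]

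Next I would eliminate $F(\widetilde{x}_{t-1})$ via $\mu$-strong convexity (Assumption \ref{assump: strong_convexity}) applied to the pair $(\widetilde{x}_{t-1}, x)$, i.e.\ $F(\widetilde{x}_{t-1}) \le F(x) - \langle \nabla F(\widetilde{x}_{t-1}), x - \widetilde{x}_{t-1}\rangle - \frac{\mu}{2}\|\widetilde{x}_{t-1} - x\|^2$. To match the target statement I would split $\nabla F(\widetilde{x}_{t-1}) = \nabla F(x_{t-1}) + (\nabla F(\widetilde{x}_{t-1}) - \nabla F(x_{t-1}))$ wherever it appears. In $-\langle \nabla F(\widetilde{x}_{t-1}), x - \widetilde{x}_{t-1}\rangle$ this splitting produces exactly the two trailing terms $-\langle \nabla F(x_{t-1}), x - \widetilde{x}_{t-1}\rangle$ and $-\langle \nabla F(\widetilde{x}_{t-1}) - \nabla F(x_{t-1}), x - \widetilde{x}_{t-1}\rangle$ of the lemma; in $-\eta_t\langle \nabla F(\widetilde{x}_{t-1}), \nabla F(x_{t-1})\rangle$ it produces $-\eta_t\|\nabla F(x_{t-1})\|^2$ plus the cross term $-\eta_t\langle \nabla F(\widetilde{x}_{t-1}) - \nabla F(x_{t-1}), \nabla F(x_{t-1})\rangle$. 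I would bound this cross term by the AM--GM inequality $-\eta_t\langle u, v\rangle \le \eta_t\|u\|^2 + \frac{\eta_t}{4}\|v\|^2$ with $u = \nabla F(\widetilde{x}_{t-1}) - \nabla F(x_{t-1})$, $v = \nabla F(x_{t-1})$, and then apply $L$-smoothness once more, $\|u\|^2 \le L^2\|\widetilde{x}_{t-1} - x_{t-1}\|^2$, which gives precisely the $\eta_t L^2\|x_{t-1} - \widetilde{x}_{t-1}\|^2$ term.

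Collecting the coefficients of $\|\nabla F(x_{t-1})\|^2$ then gives $\frac{\eta_t^2 L}{2} - \eta_t + \frac{\eta_t}{4} = -\eta_t\left(\frac{3}{4} - \frac{\eta_t L}{2}\right)$, matching the claim, and every remaining term already sits on the right-hand side, so the lemma follows. The only point requiring care is the weight chosen in the AM--GM step for the cross term: it must be tuned so that the coefficient of $\|\nabla F(x_{t-1})\|^2$ is exactly $\frac14$ (yielding the $\frac34$ constant up to the $O(\eta_t L)$ correction) while the coefficient of $\|\nabla F(\widetilde{x}_{t-1}) - \nabla F(x_{t-1})\|^2$ stays at $1$, so that after invoking smoothness it lands on exactly $\eta_t L^2$. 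Everything else is routine bookkeeping.
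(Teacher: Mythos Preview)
Your proposal is correct and follows essentially the same approach as the paper: apply $L$-smoothness at $\widetilde{x}_{t-1}$, take conditional expectation using the variance bound, split $\nabla F(\widetilde{x}_{t-1}) = \nabla F(x_{t-1}) + (\nabla F(\widetilde{x}_{t-1}) - \nabla F(x_{t-1}))$, control the cross term by Young's inequality with the $1{:}\tfrac14$ weights, and finish with $\mu$-strong convexity at $(\widetilde{x}_{t-1},x)$. The paper merely orders the two main pieces slightly differently (it completes the descent-plus-cross-term bound before invoking strong convexity), but the computations and constants are identical.
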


\begin{proof}
By the $L$-smoothness of $F$ and the definition of $\widetilde{y}_t$, we have
\begin{align*}
    F(\widetilde{y}_t) \leq&\  F(\widetilde{x}_{t-1}) + \langle \nabla F(\widetilde{x}_{t-1}), \widetilde y_t - \widetilde x_{t-1} \rangle + \frac{L}{2}\|\widetilde{x}_{t-1} - \widetilde{y}_t\|^2 \\
    \leq&\ F(\widetilde x_{t-1}) -\eta_t\left\langle \nabla F(\widetilde{x}_{t-1}), \frac{1}{P}\sum_{p=1}^P \nabla f_{i_{t, p}}(x_{t-1})\right\rangle + \frac{\eta_t^2L}{2}\left\|\frac{1}{P}\sum_{p=1}^P \nabla f_{i_{t, p}}(x_{t-1})\right\|^2.
\end{align*}
Taking expectations of this inequality with respect to the $t$-th iteration gives
\begin{align*}
    &\mathbb{E}[F(\widetilde{y}_t)] \\
    \leq&\ F(\widetilde{x}_{t-1}) -\eta_t\langle \nabla F(\widetilde{x}_{t-1}) - \nabla F(x_{t-1}), \nabla F(x_{t-1})\rangle - \eta_t\left(1 - \frac{\eta_t L}{2}\right)\|\nabla F(x_{t-1})\|^2 + \frac{\eta_t^2L\mathcal{V}}{2P} \\
    \leq&\ F(\widetilde{x}_{t-1})  - \eta_t\left(\frac{3}{4} - \frac{\eta_t L}{2}\right)\|\nabla F(x_{t-1})\|^2 + \frac{\eta_t^2 L \mathcal{V}}{2P} + \eta_tL^2\|x_{t-1} - \widetilde{x}_{t-1}\|^2.
\end{align*}
Here the first inequality holds because $\mathbb{E}[(1/P)\sum_{p=1}^P\nabla f_{i_{t, p}}(x_{t-1})] = \nabla F(x_{t-1})$ and $\mathbb{E}\|(1/P)\sum_{p=1}^P\nabla f_{i_{t, p}}(x_{t-1})\|^2 = \mathbb{E}\|(1/P)\sum_{p=1}^P\nabla f_{i_{t, p}}(x_{t-1}) - \nabla F(x_{t-1})\|^2 + \|\nabla F(x_{t-1})\|^2 \leq \mathcal{V}/P + \|\nabla F(x_{t-1})\|^2$. The second inequality follows from $L$-smoothness of $F$ and Young's inequality. Also, we have
\begin{align*}
    F(\widetilde x_{t-1}) \leq&\ F(x) - \langle \nabla F(\widetilde{x}_{t-1}), x - \widetilde{x}_{t-1}\rangle - \frac{\mu}{2}\|\widetilde{x}_{t-1} - x\|^2 \\
    =&\ F(x) - \langle \nabla F(x_{t-1}), x - \widetilde{x}_{t-1}\rangle  - \frac{\mu}{2}\|\widetilde{x}_{t-1} - x\|^2 - \langle \nabla F(\widetilde{x}_{t-1}) - \nabla F(x_{t-1}), x - \widetilde{x}_{t-1}\rangle
\end{align*}
by $\mu$-strong convexity of $F$.
Combining the above two inequality results in 
\begin{align*}
    \mathbb{E}[F(\widetilde{y}_t)]\leq&\ F(x)  - \eta_t\left(\frac{3}{4} - \frac{\eta_t L}{2}\right)\|\nabla F(x_{t-1})\|^2 + \frac{\eta_t^2 L \mathcal{V}}{2P} +  \eta_tL^2\|x_{t-1} - \widetilde{x}_{t-1}\|^2 \\
    &- \langle \nabla F(x_{t-1}), x - \widetilde{x}_{t-1}\rangle - \frac{\mu}{2}\|\widetilde{x}_{t-1} - x\|^2 - \langle \nabla F(\widetilde{x}_{t-1}) - \nabla F(x_{t-1}), x - \widetilde{x}_{t-1}\rangle. 
\end{align*}
\end{proof}

\begin{lem}\label{lem: s_snag_ef_z_t_ineq} 
Suppose that Assumptions \ref{assump: smoothness} and \ref{assump: strong_convexity} hold. Set $\beta_t = \lambda_t\mu/(1 + \lambda_t\mu)$. For $x \in \mathbb{R}^d$, it follows that
\begin{align*}
    &- \langle \nabla F(x_{t-1}), x - \widetilde{x}_{t-1}\rangle - \frac{\mu}{2}\|\widetilde{x}_{t-1} - x\|^2 \\
    \leq&\ \frac{1}{\eta_t}\mathbb{E}\langle \widetilde{x}_{t-1} - \widetilde{y}_t, \widetilde{x}_{t-1} - \widetilde{z}_t \rangle \\
    &+ \frac{1-\beta_t}{2\lambda_t}\|\widetilde{z}_{t-1} -x\|^2 - (1 - \beta_t)\left(\frac{1}{2\lambda_t} + \frac{\mu}{2}\right)\mathbb{E}\|\widetilde{z}_t - x\|^2 - \frac{1-\beta_t}{2\lambda_t}\mathbb{E}\|\widetilde{z}_{t-1} - \widetilde{z}_t\|^2.
\end{align*}
\end{lem}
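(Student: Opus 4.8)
The plan is to recognize the $\widetilde z_t$-update as a single Euclidean proximal (mirror-descent) step and to invoke the standard strong-convexity-of-the-minimizer estimate. Writing $g_t := \frac1P\sum_{p=1}^P \nabla f_{i_{t,p}}(x_{t-1})$, so that $\mathbb{E}[g_t] = \nabla F(x_{t-1})$ conditionally on the history up to iteration $t-1$, I would first record the two elementary identities $\widetilde x_{t-1} - \widetilde y_t = \eta_t g_t$ and, by inspecting the stationarity condition, $\widetilde z_t = \arg\min_{z}\phi_t(z)$ where $\phi_t(z) = \lambda_t\langle g_t, z\rangle + \frac{1-\beta_t}{2}\|z - \widetilde z_{t-1}\|^2 + \frac{\beta_t}{2}\|z - \widetilde x_{t-1}\|^2$; since $(1-\beta_t)+\beta_t = 1$, this $\phi_t$ is $1$-strongly convex.

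Next I would apply $\phi_t(x) \geq \phi_t(\widetilde z_t) + \frac12\|x - \widetilde z_t\|^2$ and rearrange (dividing through by $\lambda_t$) to bound $\langle g_t, \widetilde z_t - x\rangle$ by $\frac{1-\beta_t}{2\lambda_t}\|x - \widetilde z_{t-1}\|^2 - \frac{1-\beta_t}{2\lambda_t}\|\widetilde z_t - \widetilde z_{t-1}\|^2 + \frac{\beta_t}{2\lambda_t}\|x - \widetilde x_{t-1}\|^2 - \frac{\beta_t}{2\lambda_t}\|\widetilde z_t - \widetilde x_{t-1}\|^2 - \frac{1}{2\lambda_t}\|x - \widetilde z_t\|^2$. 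Then I split $-\langle g_t, x - \widetilde x_{t-1}\rangle = \langle g_t, \widetilde z_t - x\rangle + \langle g_t, \widetilde x_{t-1} - \widetilde z_t\rangle$, rewrite the second inner product as $\frac1{\eta_t}\langle \widetilde x_{t-1} - \widetilde y_t, \widetilde x_{t-1} - \widetilde z_t\rangle$ via the first identity above, and substitute the displayed bound for $\langle g_t, \widetilde z_t - x\rangle$. The final bookkeeping uses the choice $\beta_t = \lambda_t\mu/(1+\lambda_t\mu)$, equivalently $\beta_t/\lambda_t = (1-\beta_t)\mu$ and $\frac1{2\lambda_t} = (1-\beta_t)\bigl(\frac1{2\lambda_t} + \frac\mu2\bigr)$: the first identity lets me merge $\frac{\beta_t}{2\lambda_t}\|x - \widetilde x_{t-1}\|^2$ with the $-\frac\mu2\|\widetilde x_{t-1} - x\|^2$ appearing on the left into the nonpositive $-\frac{\mu\beta_t}{2}\|x - \widetilde x_{t-1}\|^2$, which is dropped; the second lets me recast $-\frac1{2\lambda_t}\|x - \widetilde z_t\|^2$ as $-(1-\beta_t)\bigl(\frac1{2\lambda_t}+\frac\mu2\bigr)\|x - \widetilde z_t\|^2$; and the term $-\frac{\beta_t}{2\lambda_t}\|\widetilde z_t - \widetilde x_{t-1}\|^2 \leq 0$ is discarded. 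Taking the conditional expectation over the $t$-th iteration (so $g_t \mapsto \nabla F(x_{t-1})$ in the remaining inner product, while $\widetilde z_{t-1}$, $\widetilde x_{t-1}$ and $x$ are history-measurable) produces exactly the claimed inequality.

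I expect the only genuinely nontrivial step to be identifying the $z$-update with the minimizer of the particular $1$-strongly convex quadratic $\phi_t$; once that is in hand, the rest is routine manipulation, with the two places demanding a little care being (i) keeping track of which quantities are random versus fixed when passing to the conditional expectation, and (ii) verifying the two scalar identities that the choice of $\beta_t$ encodes.
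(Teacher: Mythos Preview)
Your proposal is correct and follows essentially the same approach as the paper: both proofs identify $\widetilde z_t$ as the minimizer of a strongly convex quadratic in $z$, apply the corresponding optimality inequality, and rearrange using $\widetilde x_{t-1}-\widetilde y_t=\eta_t g_t$ together with the scalar relations encoded by $\beta_t=\lambda_t\mu/(1+\lambda_t\mu)$. The only difference is cosmetic normalization---the paper works with $V_t(x)=(1+\lambda_t\mu)\langle g_t,x-\widetilde x_{t-1}\rangle+\tfrac{1}{2\lambda_t}\|\widetilde z_{t-1}-x\|^2+\tfrac{\mu}{2}\|\widetilde x_{t-1}-x\|^2$, which is $(1/\lambda_t+\mu)$-strongly convex, whereas your $\phi_t$ is the same function divided by $(1/\lambda_t+\mu)$ (up to an additive constant) and hence $1$-strongly convex; the resulting inequalities are identical after scaling.
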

\begin{proof}
Let 
\begin{align*}
V_t(x) =&\ \frac{1+\lambda_t\mu}{\eta_t}\langle \widetilde{x}_{t-1} - \widetilde{y}_t, x - \widetilde{x}_{t-1} \rangle + \frac{1}{2\lambda_t}\|\widetilde{z}_{t-1} -x\|^2 + \frac{\mu}{2}\|\widetilde{x}_{t-1} - x\|^2 \\
=&\ \left(1 + \lambda_t\mu\right)\left\langle \frac{1}{P}\sum_{p=1}^P \nabla f_{i_{t, p}}(x_{t-1}), x - \widetilde{x}_{t-1}\right\rangle + \frac{1}{2\lambda_t}\|\widetilde{z}_{t-1} -x\|^2 + \frac{\mu}{2}\|\widetilde{x}_{t-1} - x\|^2.
\end{align*}
If we set $\beta_t = \lambda_t\mu/(1 + \lambda_t\mu)$, $\widetilde{z}_t$ is the minimizer of $V_t$ and $V_t$ is $1/\lambda_t + \mu$-strongly convex. Hence we have
\begin{align*}
    V_t(z_t) \leq&\ V_t(x) -\left(\frac{1}{2\lambda_t} + \frac{\mu}{2}\right)\|\widetilde{z}_t - x\|^2 \\
    =&\ \left(1 + \lambda_t\mu\right)\left\langle \frac{1}{P}\sum_{p=1}^P \nabla f_{i_{t, p}}(x_{t-1}), x - \widetilde{x}_{t-1}\right\rangle + \frac{1}{2\lambda_t}\|\widetilde{z}_{t-1} -x\|^2 - \left(\frac{1}{2\lambda_t} + \frac{\mu}{2}\right)\|\widetilde{z}_t - x\|^2 \\
    & + \frac{\mu}{2}\|\widetilde{x}_{t-1} - x\|^2.
\end{align*}
Using definition of $V_t$ and taking expectations of the both sides with respect to the $t$-th iteration yield
\begin{align*}
    &- \langle \nabla F(x_{t-1}), x - \widetilde{x}_{t-1}\rangle - \frac{\mu}{2}\|\widetilde{x}_{t-1} - x\|^2 \\
    \leq&\ \frac{1}{\eta_t}\mathbb{E}\langle \widetilde{x}_{t-1} - \widetilde{y}_t, \widetilde{x}_{t-1} - \widetilde{z}_t \rangle \\
    &+ \frac{1-\beta_t}{2\lambda_t}\|\widetilde{z}_{t-1} -x\|^2 - (1 - \beta_t)\left(\frac{1}{2\lambda_t} + \frac{\mu}{2}\right)\mathbb{E}\|\widetilde{z}_t - x\|^2 - \frac{1-\beta_t}{2\lambda_t}\mathbb{E}\|\widetilde{z}_{t-1} - \widetilde{z}_t\|^2.
\end{align*}
Here we used the relation $1/(1+\lambda_t\mu) = 1 - \beta_t$.
\end{proof}

\begin{prop}\label{prop: s_snag_ef_ideal_obj_gap_bound}
Suppose that Assumptions \ref{assump: sol_existence}, \ref{assump: smoothness}, \ref{assump: bounded_variance} and \ref{assump: strong_convexity} hold. Let $\eta_t = \eta \leq 1/(2L)$, $\lambda_t = \lambda = (1/2)\sqrt{\eta/\mu}$,  $\alpha_t = \alpha = \lambda\mu/(2 + \lambda\mu)$ and $\beta_t = \beta = \lambda\mu/(1 + \lambda\mu)$. Then S-SNAG-EF satisfies
\begin{align*}
    \mathbb{E}[F(\widetilde{y}_T) - F(x_*)] \leq \Theta\left(\mu(1-\alpha)^T + \sqrt{\frac{\eta}{\mu}}\frac{\mathcal{V}^2}{P} + \sum_{t=1}^T(1-\alpha)^{T-t}\left(\lambda L^2\mathbb{E}\|m_{t-1}\|^2 - \eta \mathbb{E}\|\nabla F(x_{t-1})\|^2\right)\right). 
\end{align*}
\end{prop}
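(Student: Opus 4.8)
The plan is to run the textbook accelerated-SGD Lyapunov analysis on the virtual, compression-free iterates $(\widetilde x_t,\widetilde y_t,\widetilde z_t)$, treating the mean compression error $m_{t-1}=x_{t-1}-\widetilde x_{t-1}$ (Lemma~\ref{lem: s_snag_ef_cum_error_relations}) as an external perturbation. Introduce the potential
\[
\Phi_t := \mathbb{E}[F(\widetilde y_t) - F(x_*)] + c\,\mathbb{E}\|\widetilde z_t - x_*\|^2, \qquad c = (1-\beta)\Big(\tfrac{1}{2\lambda} + \tfrac{\mu}{2}\Big) = \Theta\big(\tfrac{1}{\lambda}\big),
\]
chosen to match the coefficient of $\|\widetilde z_t - x_*\|^2$ appearing in Lemma~\ref{lem: s_snag_ef_z_t_ineq}, and aim to establish the one-step contraction
\[
\Phi_t \le (1-\alpha)\,\Phi_{t-1} + \Theta\!\Big(\tfrac{\eta^2 L\mathcal V}{P}\Big) + \Theta\big(\lambda L^2\,\mathbb{E}\|m_{t-1}\|^2\big) - \Theta\big(\eta\,\mathbb{E}\|\nabla F(x_{t-1})\|^2\big).
\]
Unrolling this from $t=1$ to $T$ and bounding the geometric sum $\sum_{t=1}^T(1-\alpha)^{T-t}$ of the $\mathcal V$-term by $\Theta(1/\alpha)$ (with $\alpha=\Theta(\sqrt{\eta\mu})$ and the factor $\eta\le 1/(2L)$ absorbed into the constant) reproduces the stated bound, the term $(1-\alpha)^T\Phi_0=(1-\alpha)^T(F(x_0)-F(x_*)+c\|x_0-x_*\|^2)$ supplying the leading geometric term.

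For the one-step inequality I would start from Lemma~\ref{lem: s_snag_ef_object_bound} with $x=x_*$ and substitute Lemma~\ref{lem: s_snag_ef_z_t_ineq} with $x=x_*$ for the block $-\langle\nabla F(x_{t-1}),x_*-\widetilde x_{t-1}\rangle-\tfrac{\mu}{2}\|\widetilde x_{t-1}-x_*\|^2$. This bounds $\mathbb{E}[F(\widetilde y_t)]$ by $F(x_*)$ plus: the telescoping pair $\tfrac{1-\beta}{2\lambda}\|\widetilde z_{t-1}-x_*\|^2-c\,\mathbb{E}\|\widetilde z_t-x_*\|^2$; the free negative term $-\tfrac{1-\beta}{2\lambda}\mathbb{E}\|\widetilde z_{t-1}-\widetilde z_t\|^2$; the cross term $\tfrac1\eta\mathbb{E}\langle\widetilde x_{t-1}-\widetilde y_t,\widetilde x_{t-1}-\widetilde z_t\rangle$; the stochastic term $\tfrac{\eta^2 L\mathcal V}{2P}$; the compression term $\eta L^2\|m_{t-1}\|^2$; the negative curvature term $-\eta(\tfrac34-\tfrac{\eta L}{2})\|\nabla F(x_{t-1})\|^2$; and the residual mismatch $-\langle\nabla F(\widetilde x_{t-1})-\nabla F(x_{t-1}),x_*-\widetilde x_{t-1}\rangle$.

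Two pieces then require work. First, expand the cross term using the update rules $\widetilde x_{t-1}-\widetilde y_t=\eta g_t$ and $\widetilde x_{t-1}-\widetilde z_t=(1-\beta)(\widetilde x_{t-1}-\widetilde z_{t-1})+\lambda g_t$ (writing $g_t=\tfrac1P\sum_p\nabla f_{i_{t,p}}(x_{t-1})$), then the coupling identities $\widetilde x_{t-1}-\widetilde z_{t-1}=(1-\alpha)(\widetilde y_{t-1}-\widetilde z_{t-1})$ and $\widetilde y_{t-1}-\widetilde z_{t-1}=\tfrac1\alpha(\widetilde y_{t-1}-\widetilde x_{t-1})$, and take the conditional expectation ($\mathbb{E}g_t=\nabla F(x_{t-1})$, $\mathbb{E}\|g_t\|^2\le\mathcal V/P+\|\nabla F(x_{t-1})\|^2$); combined with the free term $-\tfrac{1-\beta}{2\lambda}\mathbb{E}\|\widetilde z_{t-1}-\widetilde z_t\|^2$ this collapses to a multiple of $\langle\nabla F(x_{t-1}),\widetilde y_{t-1}-\widetilde x_{t-1}\rangle$ plus $\Theta(\lambda)\mathcal V/P$ plus a $\|\nabla F(x_{t-1})\|^2$ contribution, and convexity of $F$ at $x_{t-1}$ turns $\langle\nabla F(x_{t-1}),\widetilde y_{t-1}-\widetilde x_{t-1}\rangle$ into $F(\widetilde y_{t-1})-F(\widetilde x_{t-1})$; a further use of convexity at $\widetilde x_{t-1}=(1-\alpha)\widetilde y_{t-1}+\alpha\widetilde z_{t-1}$ converts this into the $(1-\alpha)(F(\widetilde y_{t-1})-F(x_*))$ and $\Theta(c)\|\widetilde z_{t-1}-x_*\|^2$ terms needed for $\Phi_{t-1}$. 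Second, control the residual mismatch by $L$-smoothness and Young, $\|\nabla F(\widetilde x_{t-1})-\nabla F(x_{t-1})\|\le L\|m_{t-1}\|$, splitting it into $\Theta(\lambda L^2)\|m_{t-1}\|^2$ and small multiples of $\|\widetilde x_{t-1}-x_*\|^2\le(1-\alpha)\|\widetilde y_{t-1}-x_*\|^2+\alpha\|\widetilde z_{t-1}-x_*\|^2$, the first of which is $\le\Theta(1/\mu)(F(\widetilde y_{t-1})-F(x_*))$ by strong convexity and thus also folds into $\Phi_{t-1}$.

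The main obstacle is closing this cross-term bookkeeping exactly: one has to check that, once everything is assembled, the coefficient of $F(\widetilde y_{t-1})-F(x_*)$ is precisely $1-\alpha$, that the coefficients of $\|\widetilde z_{t-1}-x_*\|^2$ on the two sides telescope with that same ratio $1-\alpha$, and that the net coefficient of $\|\nabla F(x_{t-1})\|^2$ stays $-\Theta(\eta)$; this triple matching is exactly what forces the specific choices $\lambda=\tfrac12\sqrt{\eta/\mu}$, $\alpha=\lambda\mu/(2+\lambda\mu)$, $\beta=\lambda\mu/(1+\lambda\mu)$ together with $\eta\le 1/(2L)$, the convenient identities $1-\beta=1/(1+\lambda\mu)$ and $\tfrac{1-\alpha}{\alpha}=\tfrac{2}{\lambda\mu}$ being the gears that make it work. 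The remaining steps — Cauchy--Schwarz/Young estimates, the $L$-smoothness descent, and the final geometric summation — are routine.
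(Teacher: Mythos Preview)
Your overall Lyapunov plan is reasonable, but there is a genuine gap in how you handle the ``residual mismatch'' $-\langle \nabla F(\widetilde x_{t-1})-\nabla F(x_{t-1}),\,x_*-\widetilde x_{t-1}\rangle$, and it prevents you from obtaining the stated coefficient $\Theta(\lambda L^2)$ on $\mathbb{E}\|m_{t-1}\|^2$. You propose to Young-split this term into $\Theta(\lambda L^2)\|m_{t-1}\|^2$ plus a small multiple of $\|\widetilde x_{t-1}-x_*\|^2$, and then absorb the latter via $\|\widetilde x_{t-1}-x_*\|^2\le (1-\alpha)\|\widetilde y_{t-1}-x_*\|^2+\alpha\|\widetilde z_{t-1}-x_*\|^2$ and strong convexity $\|\widetilde y_{t-1}-x_*\|^2\le (2/\mu)(F(\widetilde y_{t-1})-F(x_*))$. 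But for the $\widetilde y$-part to fold into $(1-\alpha)\Phi_{t-1}$ without destroying the $1-\alpha$ contraction, the Young weight on $\|\widetilde x_{t-1}-x_*\|^2$ must be at most $\Theta(\alpha\mu)=\Theta(\lambda\mu^2)$, which forces the compression coefficient to be at least $\Theta(L^2/(\lambda\mu^2))$ --- a factor $\Theta(1/(\eta\mu))$ larger than the $\Theta(\lambda L^2)$ in the statement. The downstream Theorem~\ref{thm: s_snag_ef_strongly_convex} would then fail.

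The paper avoids this by \emph{not} applying Lemma~\ref{lem: s_snag_ef_object_bound} only at $x=x_*$. It applies the lemma twice, at $x=x_*$ and at $x=\widetilde y_{t-1}$, and takes the convex combination with weights $\alpha$ and $1-\alpha$. The identity $\widetilde x_{t-1}-\alpha\widetilde z_t-(1-\alpha)\widetilde y_{t-1}=-\alpha(\widetilde z_t-\widetilde z_{t-1})$ then makes the combined residual equal to $-\alpha\langle\nabla F(\widetilde x_{t-1})-\nabla F(x_{t-1}),\,\widetilde z_t-\widetilde z_{t-1}\rangle+\alpha\langle\nabla F(\widetilde x_{t-1})-\nabla F(x_{t-1}),\,\widetilde z_t-x_*\rangle$, i.e.\ every dangerous inner product now carries the small factor $\alpha$. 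Young's inequality then pairs these with the already-present terms $-\tfrac{\alpha(1-\beta)}{2\lambda}\|\widetilde z_t-\widetilde z_{t-1}\|^2$ and $-\alpha(1-\beta)\tfrac{\mu}{2}\|\widetilde z_t-x_*\|^2$, producing compression coefficients $\Theta(\alpha\lambda)L^2=\Theta(\eta)L^2$ and $\Theta(\alpha/\mu)L^2=\Theta(\lambda)L^2$ respectively. This is precisely what gives the $\lambda L^2$ in the proposition. Your single-application route does not generate that $\alpha$ prefactor, and the bookkeeping you sketch cannot manufacture it after the fact. (A secondary imprecision: ``convexity at $x_{t-1}$'' does not turn $\langle\nabla F(x_{t-1}),\widetilde y_{t-1}-\widetilde x_{t-1}\rangle$ into $F(\widetilde y_{t-1})-F(\widetilde x_{t-1})$; you would need the gradient at $\widetilde x_{t-1}$, introducing yet another $\|m_{t-1}\|$-term multiplied by the large coefficient $(1-\alpha)/\alpha$.)
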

\begin{proof}
Combining Lemma \ref{lem: s_snag_ef_object_bound} and Lemma \ref{lem: s_snag_ef_z_t_ineq} with $x = x_*$, we get
\begin{align}
    \begin{split}\label{ineq: s_snag_ef_obj_gap}
        \mathbb{E}[F(\widetilde{y}_t)] \leq&\ F(x_*)  - \eta_t\left(\frac{3}{4} - \frac{\eta_t L}{2}\right)\|\nabla F(x_{t-1})\|^2 + \frac{\eta_t^2L\mathcal{V}}{2P} + \eta_tL^2\|x_{t-1} - \widetilde{x}_{t-1}\|^2 \\
        &+ \frac{1}{\eta_t}\mathbb{E}\langle \widetilde{x}_{t-1} - \widetilde{y}_t, \widetilde{x}_{t-1} - \widetilde{z}_t \rangle \\
        &+ \frac{1-\beta_t}{2\lambda_t}\|\widetilde{z}_{t-1} -x_*\|^2 -  (1-\beta_t)\left(\frac{1}{2\lambda_t} + \frac{\mu}{2}\right)\mathbb{E}\|\widetilde{z}_t - x_*\|^2 -  \frac{1-\beta_t}{2\lambda_t}\mathbb{E}\|\widetilde{z}_{t-1} - \widetilde{z}_t\|^2 \\
        &- \langle \nabla F(\widetilde{x}_{t-1}) - \nabla F(x_{t-1}), x_* - \widetilde{x}_{t-1}\rangle \\
        =&\ F(x_*)  - \eta_t\left(\frac{3}{4} - \frac{\eta_t L}{2}\right)\|\nabla F(x_{t-1})\|^2 + \frac{\eta_t^2L\mathcal{V}}{2P} + \eta_tL^2\|x_{t-1} - \widetilde{x}_{t-1}\|^2 \\
        &+ \frac{1}{\eta_t}\mathbb{E}\langle \widetilde{x}_{t-1} - \widetilde{y}_t, \widetilde{x}_{t-1} - \widetilde{z}_t \rangle \\
        &+ \frac{1-\beta_t}{2\lambda_t}\|\widetilde{z}_{t-1} -x_*\|^2 -  (1-\beta_t)\left(\frac{1}{2\lambda_t} + \frac{\mu}{2}\right)\mathbb{E}\|\widetilde{z}_t - x_*\|^2 -  \frac{1-\beta_t}{2\lambda_t}\mathbb{E}\|\widetilde{z}_{t-1} - \widetilde{z}_t\|^2 \\
        &+ \langle \nabla F(\widetilde{x}_{t-1}) - \nabla F(x_{t-1}), \widetilde{x}_{t-1} - \widetilde{z}_t\rangle + \langle \nabla F(\widetilde{x}_{t-1}) - \nabla F(x_{t-1}), \widetilde{z}_t - x_*\rangle.
    \end{split}
\end{align}
Also, using Lemma \ref{lem: s_snag_ef_object_bound} with $x = \widetilde{y}_{t-1}$ gives
\begin{align}
    \begin{split}\label{ineq: s_snag_ef_obj_diff}
    \mathbb{E}[F(\widetilde{y}_t)] \leq&\ F(\widetilde{y}_{t-1})  - \eta_t\left(\frac{3}{4} - \frac{\eta_t L}{2}\right)\|\nabla F(x_{t-1})\|^2 + \frac{\eta_t^2L\mathcal{V}}{2P} + \eta_tL^2\|x_{t-1} - \widetilde{x}_{t-1}\|^2 \\
    &+ \frac{1}{\eta_t}\mathbb{E}\langle \widetilde{x}_{t-1} - \widetilde{y}_t, \widetilde{x}_{t-1} - \widetilde{y}_{t-1}\rangle + \langle \nabla F(\widetilde{x}_{t-1}) - \nabla F(x_{t-1}), \widetilde{x}_{t-1} - \widetilde{y}_{t-1}\rangle .
    \end{split}
\end{align}
Now, summing $\alpha_t \times$ (\ref{ineq: s_snag_ef_obj_gap}) and $(1 - \alpha_t) \times$ (\ref{ineq: s_snag_ef_obj_diff}) yields
\begin{align*}
    \mathbb{E}[F(\widetilde{y}_t)] - F(x_*) \leq&\ (1 - \alpha_t)(F(\widetilde{y}_{t-1}) - F(x_*)) \\
    &- \eta_t\left(\frac{3}{4} - \frac{\eta_t L}{2}\right)\|\nabla F(x_{t-1})\|^2 + \frac{\eta_t^2L\mathcal{V}}{2P} + \eta_tL^2\|x_{t-1} - \widetilde{x}_{t-1}\|^2 \\
    &+ \frac{1}{\eta_t}\mathbb{E}\langle \widetilde{x}_{t-1} - \widetilde{y}_t, \widetilde{x}_{t-1} - \alpha_t \widetilde{z}_t - (1 - \alpha_t)\widetilde{y}_{t-1}\rangle \\
    &+  \frac{\alpha_t(1-\beta_t)}{2\lambda_t}\|\widetilde{z}_{t-1} -x_*\|^2 -  \alpha_t(1-\beta_t)\left(\frac{1}{2\lambda_t} + \frac{\mu}{2}\right)\mathbb{E}\|\widetilde{z}_t - x_*\|^2 \\
    &-  \frac{\alpha_t(1-\beta_t)}{2\lambda_t}\mathbb{E}\|\widetilde{z}_{t-1} - \widetilde{z}_t\|^2 \\
    &+ \langle \nabla F(\widetilde{x}_{t-1}) - \nabla F(x_{t-1}), \widetilde{x}_{t-1} - \alpha_t \widetilde{z}_t - (1 - \alpha_t)\widetilde{y}_{t-1}\rangle \\
    &+ \alpha_t\langle \nabla F(\widetilde{x}_{t-1}) - \nabla F(x_{t-1}), \widetilde{z}_t - x_*\rangle.
\end{align*}
Since
$\widetilde{x}_{t-1} - \alpha_t \widetilde{z}_t - (1 - \alpha_t)\widetilde{y}_{t-1} = -\alpha_t(\widetilde{z}_t - \widetilde{z}_{t-1})$,
we have
\begin{align*}
    &\mathbb{E}\langle \widetilde{x}_{t-1} - \widetilde{y}_t, \widetilde{x}_{t-1} - \alpha_t \widetilde{z}_t - (1 - \alpha_t)\widetilde{y}_{t-1}\rangle \\
    =&\ - \alpha_t \mathbb{E}\langle \widetilde{x}_{t-1} - \widetilde{y}_t, \widetilde{z}_t - \widetilde{z}_{t-1}\rangle \\
    \leq&\ \frac{\alpha_t\lambda_t}{(1-\beta_t)\eta_t}\mathbb{E}\|\widetilde{x}_{t-1} - \widetilde{y}_t\|^2 + \frac{\alpha_t(1-\beta_t)\eta_t}{4\lambda_t}\|\widetilde{z}_t - \widetilde{z}_{t-1}\|^2 \\
    =&\ \frac{\alpha_t\lambda_t\eta_t}{1-\beta_t}\mathbb{E}\left\|\frac{1}{P}\sum_{p=1}^P \nabla f_{i_{t, p}}(x_{t-1})\right\|^2 + \frac{\alpha_t(1-\beta_t)\eta_t}{4\lambda_t}\|\widetilde{z}_t - \widetilde{z}_{t-1}\|^2 \\
    =&\ \frac{\alpha_t\lambda_t\eta_t}{1-\beta_t}\frac{\mathcal{V}}{P} + \frac{\alpha_t\lambda_t\eta_t}{1-\beta_t}\|\nabla F(x_{t-1})\|^2 + \frac{\alpha_t(1-\beta_t)\eta_t}{4\lambda_t}\|\widetilde{z}_t - \widetilde{z}_{t-1}\|^2. 
\end{align*}
Also, it holds that
\begin{align*}
    &\langle \nabla F(\widetilde{x}_{t-1}) - \nabla F(x_{t-1}), \widetilde{x}_{t-1} - \alpha_t \widetilde{z}_t - (1 - \alpha_t)\widetilde{y}_{t-1}\rangle \\
    =&\ - \alpha_t \langle \nabla F(\widetilde{x}_{t-1}) - \nabla F(x_{t-1}), \widetilde{z}_t -\widetilde{z}_{t-1}\rangle \\
    \leq&\ \frac{\lambda_t\alpha_t}{(1-\beta_t)}\|\nabla F(\widetilde{x}_{t-1}) - \nabla F(x_{t-1})\|^2 + \frac{\alpha_t(1 - \beta_t)}{4\lambda_t}\|\widetilde{z}_t - \widetilde{z}_{t-1}\|^2.
\end{align*}
Furthermore, we have
\begin{align*}
    \alpha_t\langle \nabla F(\widetilde{x}_{t-1}) - \nabla F(x_{t-1}), \widetilde{z}_t - x_*\rangle \leq \frac{\alpha_t}{(1 - \beta_t)\mu}\|\nabla F(\widetilde{x}_{t-1}) - \nabla F(x_{t-1})\|^2 + \alpha_t(1 - \beta_t)\frac{\mu}{4}\mathbb{E}\|\widetilde{z}_t - x_*\|^2.
\end{align*}
If we assume that 
\begin{equation}
    \frac{\alpha_t\lambda_t}{1-\beta_t} \leq \frac{\eta_t}{4}, \label{ineq: params_condition}
\end{equation}
by these inequality, we get
\begin{align*}
    \mathbb{E}[F(\widetilde{y}_t)] - F(x_*) \leq&\ (1 - \alpha_t)(F(\widetilde{y}_{t-1}) - F(x_*)) \\
    &+ \frac{(\eta_t^2L + \eta_t)\mathcal{V}}{2P} + \left(2\eta_tL^2 + \eta_tL^2/(\lambda_t\mu)\right)\|x_{t-1} - \widetilde{x}_{t-1}\|^2 - \frac{\eta_t}{4}\|\nabla F(x_{t-1})\|^2 \\
    &+  \frac{\alpha_t}{2\lambda_t}\|\widetilde{z}_{t-1} -x_*\|^2 -  \alpha_t\left(\frac{1}{2\lambda_t} + \frac{\mu}{4}\right)\mathbb{E}\|\widetilde{z}_t - x_*\|^2,
\end{align*}
Let $\Gamma_t = \Pi_{t'=1}^t (1 - \alpha_{t'}) > 0$. Multiplying $1/\Gamma_t$ to both sides of the above inequality yields
\begin{align*}
      \frac{1}{\Gamma_t}\mathbb{E}[F(\widetilde{y}_t)] - F(x_*) \leq&\ \frac{1}{\Gamma_{t-1}}(F(\widetilde{y}_{t-1}) - F(x_*)) \\
    &+ \frac{(\eta_t^2L + \eta_t)\mathcal{V}}{2\Gamma_t P} + \frac{2\eta_tL^2 + \eta_tL^2/(\lambda_t\mu)}{\Gamma_t}\|x_{t-1} - \widetilde{x}_{t-1}\|^2 - \frac{\eta_t}{8\Gamma_t}\|\nabla F(x_{t-1})\|^2 \\
    &+ \frac{\alpha_t(1 - \beta_t)}{2\Gamma_t \lambda_t}\|\widetilde{z}_{t-1} -x_*\|^2 -  \frac{\alpha_t(1 - \beta_t)}{\Gamma_t}\left(\frac{1}{2\lambda_t} + \frac{\mu}{4}\right)\mathbb{E}\|\widetilde{z}_t - x_*\|^2.  
\end{align*}
Taking expectations of this inequality with respect to the all random variables gives
\begin{align*}
      \frac{1}{\Gamma_t}\mathbb{E}[F(\widetilde{y}_t) - F(x_*)] \leq&\ \frac{1}{\Gamma_{t-1}}\mathbb{E}[F(\widetilde{y}_{t-1}) - F(x_*)] \\
    &+ \frac{(\eta_t^2L + \eta_t)\mathcal{V}^2}{2\Gamma_t P} + \frac{2\eta_tL^2 + \eta_tL^2/(\lambda_t\mu)}{\Gamma_t}\mathbb{E}\|x_{t-1} - \widetilde{x}_{t-1}\|^2 - \frac{\eta_t}{8\Gamma_t}\mathbb{E}\|\nabla F(x_{t-1})\|^2\\
    &+ \frac{\alpha_t(1 - \beta_t)}{2\Gamma_t \lambda_t}\mathbb{E}\|\widetilde{z}_{t-1} -x_*\|^2 -  \frac{\alpha_t(1 - \beta_t)}{\Gamma_t}\left(\frac{1}{2\lambda_t} + \frac{\mu}{4}\right)\mathbb{E}\|\widetilde{z}_t - x_*\|^2.  
\end{align*}
Let $\alpha_t = \alpha = \lambda\mu/(2 + \lambda\mu)$, $\beta = \beta_t = \lambda \mu/ (1 + \lambda\mu)$, $\eta_t = \eta$ and $\lambda_t = \lambda = (1/2)\sqrt{\eta/\mu}$. Then (\ref{ineq: params_condition}) holds. 
Also, we have the relation $\alpha_t/(2\Gamma_t \lambda_t) = \alpha/(2\Gamma_t\lambda) \leq \alpha/(2\Gamma_{t-1})(1/(2\lambda) + \mu/4)$ by the definition of $\alpha$.
Hence we have
\begin{align*}
      \frac{1}{\Gamma_t}\mathbb{E}[F(\widetilde{y}_t) - F(x_*)] \leq&\ \frac{1}{\Gamma_{t-1}}\mathbb{E}[F(\widetilde{y}_{t-1}) - F(x_*)] \\
    &+ \frac{(\eta^2L + \eta )\mathcal{V}^2}{2\Gamma_t P} + \frac{\eta L^2 + \eta L^2/(\lambda\mu)}{\Gamma_t}\mathbb{E}\|x_{t-1} - \widetilde{x}_{t-1}\|^2 - \frac{\eta}{8\Gamma_t}\mathbb{E}\|\nabla F(x_{t-1})\|^2\\
    &+ \frac{\alpha(1 - \beta)}{\Gamma_{t-1}}\left(\frac{1}{2\lambda} + \frac{\mu}{4}\right)\mathbb{E}\|\widetilde{z}_{t-1} -x_*\|^2 -  \frac{\alpha(1 - \beta)}{\Gamma_t}\left(\frac{1}{2\lambda} + \frac{\mu}{4}\right)\mathbb{E}\|\widetilde{z}_t - x_*\|^2.
\end{align*}
Summing up the above inequality from $t=1$ to $T$, we obtain
\begin{align*}
      \frac{1}{\Gamma_T}\mathbb{E}[F(\widetilde{y}_T) - F(x_*)] 
      \leq&\ \sum_{t=1}^T \frac{(\eta^2L + \eta)\mathcal{V}^2}{2\Gamma_t P} + \sum_{t=1}^T \frac{\eta L^2 + \eta L^2/(\lambda \mu)}{\Gamma_t}\mathbb{E}\|x_{t-1} - \widetilde{x}_{t-1}\|^2 \\
    &- \sum_{t=1}^T\frac{\eta}{8\Gamma_t}\mathbb{E}\|\nabla F(x_{t-1})\|^2+ \alpha(1-\beta)\left(\frac{1}{2\lambda} + \frac{\mu}{4}\right)\|\widetilde{z}_{0} -x_*\|^2.  
\end{align*}
Note that $\widetilde{z}_0 = x_0$. We also have 
$\alpha = \Theta(\sqrt{\eta\mu})$ since $\eta = O(1/L)$, $\mu \leq L$ and the setting of $\lambda$.
Multiplying $\Gamma_t$ to both sides of the inequality and rearranging it yield
\begin{align*}
    \mathbb{E}[F(\widetilde{y}_T) - F(x_*)] \leq \Theta\left(\mu(1-\alpha)^T + \sqrt{\frac{\eta}{\mu}}\frac{\mathcal{V}^2}{P} + \sum_{t=1}^T(1-\alpha)^{T-t}\left(\lambda L^2\mathbb{E}\|m_{t-1}\|^2 - \eta \mathbb{E}\|\nabla F(x_{t-1})\|^2\right)\right). 
\end{align*}
\end{proof}

\begin{lem}\label{lem: s_snag_ef_conversion}
\begin{align*}
    \mathbb{E}[F(x_{t-1}) - F(x_*)] \leq \Theta\left(\mathbb{E}[F(\widetilde{y}_t) - F(x_*)] + L\mathbb{E}\|m_{t-1}\|^2\right).
\end{align*}
\end{lem}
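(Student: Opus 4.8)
The plan is to compare the true iterate $x_{t-1}$ with the ``ideal'' half step $\widetilde y_t = \widetilde x_{t-1} - \eta_t \frac1P\sum_{p=1}^P \nabla f_{i_{t,p}}(x_{t-1})$ through a single use of the convexity of $F$ with tangent point $x_{t-1}$, and then to remove the resulting first order error terms with the smoothness ``gradient domination'' bound $\|\nabla F(x)\|^2 \le 2L(F(x) - F(x_*))$ (valid under Assumptions \ref{assump: sol_existence} and \ref{assump: smoothness}). Throughout I condition on the history $\mathcal F_{t-1}$ up to iteration $t-1$, under which $x_{t-1}$, $\widetilde x_{t-1}$ and $m_{t-1} = x_{t-1} - \widetilde x_{t-1}$ (by Lemma \ref{lem: s_snag_ef_cum_error_relations}) are deterministic, while $\frac1P\sum_{p=1}^P \nabla f_{i_{t,p}}(x_{t-1})$ is an unbiased estimate of $\nabla F(x_{t-1})$.

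By convexity of $F$ (implied by Assumption \ref{assump: strong_convexity}) at the point $x_{t-1}$ we have $F(x_{t-1}) \le F(\widetilde y_t) + \langle \nabla F(x_{t-1}), x_{t-1} - \widetilde y_t\rangle$. Writing $x_{t-1} - \widetilde y_t = m_{t-1} + \eta_t \frac1P\sum_{p=1}^P \nabla f_{i_{t,p}}(x_{t-1})$ and taking the conditional expectation, unbiasedness of the mini-batch gradient turns the $\eta_t(\cdot)$ part into $\eta_t\|\nabla F(x_{t-1})\|^2$, so
\begin{align*}
  F(x_{t-1}) - F(x_*) \le \mathbb{E}\big[F(\widetilde y_t) - F(x_*) \mid \mathcal F_{t-1}\big] + \langle \nabla F(x_{t-1}), m_{t-1}\rangle + \eta_t \|\nabla F(x_{t-1})\|^2 .
\end{align*}
Bounding $\langle \nabla F(x_{t-1}), m_{t-1}\rangle \le \theta \|\nabla F(x_{t-1})\|^2 + \frac{1}{4\theta}\|m_{t-1}\|^2$ by Young's inequality and then applying $\|\nabla F(x_{t-1})\|^2 \le 2L(F(x_{t-1}) - F(x_*))$ to the two $\|\nabla F(x_{t-1})\|^2$ contributions gives
\begin{align*}
  \big(1 - 2L(\eta_t + \theta)\big)\big(F(x_{t-1}) - F(x_*)\big) \le \mathbb{E}\big[F(\widetilde y_t) - F(x_*) \mid \mathcal F_{t-1}\big] + \frac{1}{4\theta}\|m_{t-1}\|^2 .
\end{align*}
Choosing $\theta = \Theta(1/L)$ small enough and using that $\eta_t = \eta$ is a sufficiently small multiple of $1/L$ — consistent up to constants with the standing assumption $\eta \le 1/(2L)$, and satisfied by the step size eventually selected in Theorem \ref{thm: s_snag_ef_strongly_convex} — the left prefactor is a positive absolute constant; dividing by it and taking the total expectation yields $\mathbb{E}[F(x_{t-1}) - F(x_*)] \le \Theta\big(\mathbb{E}[F(\widetilde y_t) - F(x_*)] + L\mathbb{E}\|m_{t-1}\|^2\big)$.

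The one delicate point is this final absorption: after converting $\|\nabla F(x_{t-1})\|^2$ into $F(x_{t-1}) - F(x_*)$ via gradient domination, the terms $\eta_t\|\nabla F(x_{t-1})\|^2$ and the Young split of $\langle \nabla F(x_{t-1}), m_{t-1}\rangle$ reproduce $F(x_{t-1}) - F(x_*)$ on the right hand side, and its coefficient there must be kept strictly below $1$; this is exactly what forces the step size to be a small enough multiple of $1/L$. Everything else is a one-line use of convexity, unbiasedness of the mini-batch gradient, and $L$-smoothness, with no further use of strong convexity or of the variance bound $\mathcal V$.
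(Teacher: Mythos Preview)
Your proof is correct and slightly more economical than the paper's. The paper first applies convexity at $x_{t-1}$ to pass to $F(\widetilde x_{t-1})$, then applies convexity again at $\widetilde x_{t-1}$ to pass to $F(\widetilde y_t)$; this two-step route introduces the cross term $\eta_t\langle \nabla F(\widetilde x_{t-1}),\nabla F(x_{t-1})\rangle$, which must then be split via $\nabla F(\widetilde x_{t-1}) - \nabla F(x_{t-1})$ and handled with $L$-smoothness, producing an extra $\eta_t L^2\|m_{t-1}\|^2$ contribution. You instead apply convexity once at $x_{t-1}$ directly against $\widetilde y_t$, which immediately yields $\langle \nabla F(x_{t-1}), m_{t-1}\rangle + \eta_t\|\nabla F(x_{t-1})\|^2$ after the conditional expectation and skips that detour entirely. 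From that point on both arguments coincide: Young's inequality on the $m_{t-1}$ inner product, the gradient-domination bound $\|\nabla F(x_{t-1})\|^2 \le 2L(F(x_{t-1})-F(x_*))$, and absorption into the left-hand side. Both versions implicitly require $\eta$ to be a small enough constant multiple of $1/L$ (the paper's proof in fact invokes $\eta_t \le 1/(8L)$ at the end), so your caveat about the constant in $\eta \le 1/(2L)$ is exactly the same one the paper silently makes; the $\Theta(\cdot)$ in the statement absorbs this.
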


\begin{proof}
\begin{align*}
    F(x_{t-1}) \leq&\ F(\widetilde{x}_{t-1}) - \langle \nabla F(x_{t-1}), \widetilde{x}_{t-1} - x_{t-1}\rangle \\
    \leq&\ F(\widetilde{y}_t) - \langle \nabla F(\widetilde{x}_{t-1}), \widetilde{y}_t - \widetilde{x}_{t-1}\rangle - \langle \nabla F(x_{t-1}), \widetilde{x}_{t-1} - x_{t-1}\rangle \\
    =&\ F(\widetilde{y}_t) + \eta_t \left\langle \nabla F(\widetilde{x}_{t-1}), \frac{1}{P}\sum_{p=1}^P \nabla f_{i_{t, p}}(x_{t-1}) \right\rangle - \langle \nabla F(x_{t-1}), \widetilde{x}_{t-1} - x_{t-1}\rangle.
\end{align*}
Taking expectations of this inequality with respect to the $t$-th iteration gives
\begin{align*}
    F(x_{t-1}) \leq&\ \mathbb{E}[F(\widetilde{y}_{t})] + \eta_t \left\langle \nabla F(\widetilde{x}_{t-1}), \nabla F(x_{t-1}) \right\rangle - \langle \nabla F(x_{t-1}), \widetilde{x}_{t-1} - x_{t-1}\rangle \\
    \leq&\ \mathbb{E}[F(\widetilde{y}_{t})] - \eta_t \left\langle \nabla F(\widetilde{x}_{t-1}) - \nabla F(x_{t-1}), \nabla F(x_{t-1}) \right\rangle + \eta_t \|\nabla F(x_{t-1})\|^2 \\
    &+ \frac{1}{8L}\|\nabla F(x_{t-1})\|^2 + 2L\|x_{t-1} - \widetilde{x}_{t-1}\|^2 \\
    \leq&\ \mathbb{E}[F(\widetilde{y}_{t})] + 2\eta_t \|\nabla F(x_{t-1})\|^2 \\
    &+ \frac{1}{8L}\|\nabla F(x_{t-1})\|^2 + (\eta_tL^2+2L)\|x_{t-1} - \widetilde{x}_{t-1}\|^2 \\
    \leq&\  \mathbb{E}[F(\widetilde{y}_{t})] + \frac{3}{8L}\|\nabla F(x_{t-1})\|^2 + \Theta(L)\|m_{t-1}\|^2.
\end{align*}
Here the first inequality is due to the convexity of $F$. The second and third inequalities follow from Young's inequality and the $L$-smoothness of $F$. The last inequality holds because $\eta_t \leq 1/(8L)$. 
Also, by the $L$-smoothness and convexity of $F$, we have
$$\frac{1}{2L}\|\nabla F(x_{t-1})\|^2 \leq F(x_{t-1}) - F(x_*).$$ 
Using this inequality, we obtain
\begin{align*}
    \frac{1}{4}(F(x_{t-1}) - F(x_*)) \leq \mathbb{E}[F(\widetilde{y}_t) - F(x_*)] + \Theta(L)\|m_{t-1}\|^2.
\end{align*}
Multiplying $4$ to the both sides and taking expectations yields the claim.
\end{proof}

\begin{pr_s_sgd_ef_final_obj_gap_bound_prop}
Applying Lemma \ref{lem: s_snag_ef_conversion} with $t = T$ to Proposition \ref{prop: s_snag_ef_ideal_obj_gap_bound}, the statement can be immediately obtained. \qed
\end{pr_s_sgd_ef_final_obj_gap_bound_prop}

\begin{pr_s_snag_ef_strongly_convex_thm}
Let $\eta_t = \eta = \Theta(1/L \wedge \gamma^3P/L \wedge \gamma^2/\mu \wedge \gamma^{8/3}P^{2/3}\mu^{1/3}/L^{4/3} 
 \wedge P^2\mu \varepsilon^2/\mathcal{V}^2 \wedge \gamma^2\sqrt{\mu P\varepsilon}/(L\sqrt{\mathcal{V}})$.
At first, for using Proposition \ref{prop: s_snag_ef_m_t_bound}, it is required to be $\beta_t = \beta \leq \Theta(\gamma^3/\alpha^2)$. This condition is satisfied by assuming $\eta \leq \Theta(\gamma^2/\mu)$ be sufficiently small, because
\begin{align*}
    \beta = \frac{\lambda\mu}{1 + \lambda\mu} \leq \frac{\gamma^3}{\alpha^2} &\Leftrightarrow \frac{\sqrt{\eta\mu}}{2 + \sqrt{\eta\mu}} \leq \frac{\gamma^3}{\eta\mu} \\
     &\Leftarrow \sqrt{\eta\mu} \leq \frac{\gamma^3}{\eta\mu} \\
     &\Leftrightarrow \eta \leq \frac{\gamma^2}{\mu}.
\end{align*}
Observe that $1 - 2\alpha \geq 1 - \gamma$ can be satisfied for appropriate $\eta$, because $\alpha = \Theta(\sqrt{\eta\mu})$ and $\eta \leq \Theta(\gamma^2/\mu)$. Also, note that
\begin{align*}
    \frac{\lambda\eta^2L^2}{\gamma^4P} \leq \Theta(\eta) \Leftrightarrow \eta \leq \Theta(\gamma^{8/3}P^{2/3}\mu^{1/3}/L^{4/3})
\end{align*}
and
\begin{align*}
    \frac{\eta^2L}{\gamma^3P} \leq \Theta(\eta) \Leftrightarrow \eta \leq \Theta(\gamma^{3}P/L).
\end{align*}
From these facts, similar to the proof of Theorem \ref{thm: s_sgd_ef_strongly_convex}, combining Proposition \ref{prop: s_snag_ef_final_obj_gap_bound} and Proposition \ref{prop: s_snag_ef_m_t_bound}, we have
\begin{align*}
      &\mathbb{E}[F(x_{\mathrm{out}}) - F(x_*)] \\
      \leq&\ \Theta\left(\mu(1-\alpha)^T\|x_0 -x_*\|^2 + \sqrt{\frac{\eta}{\mu}}\frac{\mathcal{V}}{P} 
      + \sum_{t=1}^T (1-\alpha)^{T-t}\left(\lambda L^2\mathbb{E}\|m_{t-1}\|^2 - \eta \mathbb{E}\|\nabla F(x_{t-1})\|^2\right) + L\mathbb{E}\|m_{T-1}\|^2 \right) \\
      =&\ \Theta\left(\mu(1-\alpha)^T\|x_0 -x_*\|^2 + \sqrt{\frac{\eta}{\mu}}\frac{\mathcal{V}}{P} + \frac{\lambda\eta^2L^2\mathcal{V}d}{\gamma^2 kP} \sum_{t=1}^T(1-\alpha)^{T-t}\sum_{t'=1}^t (1 - \gamma)^{t-t'} \right.\\
      &\left.+ \sum_{t=1}^T \frac{\lambda\eta^2L^2d}{\gamma^2kP}(1-\alpha)^{T-t}\sum_{t'=1}^t(1 - \gamma)^{t-t'}\mathbb{E}\|\nabla F(x_{t'-1})\|^2 - \sum_{t=1}^T(\eta (1 - \alpha)^{T-t}/2)\mathbb{E}\|\nabla F(x_{t-1})\|^2\right. \\
      &\left. + \frac{\eta^2L\mathcal{V}d}{\gamma^2kP}\sum_{t=1}^{T}(1 - \gamma)^{T-t} + \sum_{t=1}^T \frac{\eta^2Ld}{\gamma^2kP}(1 - \gamma)^{T-t} \mathrm{E}\|\nabla F(x_{t-1})\|^2- \sum_{t=1}^T (\eta(1 - \alpha)^{T-t}/2)\mathrm{E}\|\nabla F(x_{t-1})\|^2\right) \\
      \leq&  \Theta\left(\mu(1-\alpha)^T\|x_0 -x_*\|^2 + \sqrt{\frac{\eta}{\mu}}\frac{\mathcal{V}}{P} + \frac{\lambda \eta^2L^2\mathcal{V}}{\gamma^4\alpha P} + \frac{\eta^2L\mathcal{V}}{\gamma^4P}\right) \\
      \leq&\ \Theta\left(\mu(1-\alpha)^T\|x_0 -x_*\|^2 + \sqrt{\frac{\eta}{\mu}}\frac{\mathcal{V}}{P} + \frac{\lambda \eta^2L^2\mathcal{V}}{\gamma^4\alpha P}\right).
\end{align*}
Here, the last inequality is due to $\lambda L/\alpha \geq 1$.
Set appropriate $T = \widetilde\Theta(1/\alpha) = \widetilde\Theta(1/\sqrt{\eta\mu})$. The sufficient conditions for $\mathbb{E}[F(x_{\mathrm{out}}) - F(x_*)] \leq \varepsilon$ are 
 $\eta \leq \Theta(1/L)$, $\eta \leq \gamma^2/\mu$, $\eta \leq \Theta(\gamma^{8/3}P^{2/3}\mu^{1/3}/L^{4/3})$, $\eta \leq \Theta(\gamma^{3}P/L)$, $\sqrt{\eta/\mu}\mathcal{V}/P \leq \varepsilon$ and $\lambda \eta^2L^2\mathcal{V}/(\gamma^4\alpha P) \leq \Theta(\varepsilon)$. Substituting the definition of $\eta$ to $\widetilde{O}(1/\sqrt{\eta\mu})$, we obtain the desired result. \qed
\end{pr_s_snag_ef_strongly_convex_thm}

\subsection{Analysis for Nonconvex Cases}

\begin{pr_reg_s_snag_ef_general_nonconvex_thm}
Let $\eta_t = \eta$ be the same one defined in the proof of Theorem \ref{thm: s_snag_ef_strongly_convex}.
First observe that $F_s = F + \sigma Q(x_{s-1})$ is $3L$-smooth and $L$-strongly convex, since $\sigma = L$ and $F$ is $L$-smooth. Also note that $\{f_{i, p} + \sigma Q(x_{s-1}^{+})\}_{i, p}$ has $\mathcal{V}$-bounded variance. 
From Theorem \ref{thm: s_snag_ef_strongly_convex} (with $\mu \leftarrow L$ and $\varepsilon \leftarrow \varepsilon/(16L)$), we have
$$\mathbb{E}\|\nabla F_s(x_s)\|^2 = \mathbb{E}\left\|\nabla F_s\left(x_T^{(s-1)}\right)\right\|^2 \leq 2L\mathbb{E}\left[F_s\left(x_T^{(s-1)}\right) - \mathrm{min}_{x \in \mathbb{R}^d} F_s(x)\right] \leq \frac{\varepsilon}{8}, $$
with iteration complexity
\begin{align*}
    \widetilde O\left(1 + \frac{\mathcal{V}}{P}\frac{1}{\varepsilon} + \frac{d}{k} + \frac{d^{\frac{3}{2}}}{k^{\frac{3}{2}}\sqrt{P}} + \frac{d^{\frac{4}{3}}}{k^{\frac{4}{3}}P^{\frac{1}{3}}} + \frac{d}{kP^{\frac{1}{4}}}\frac{\mathcal{V}^{\frac{1}{4}}}{\varepsilon^{\frac{1}{4}}}\right),
\end{align*}
Using this fact, we have
\begin{align*}
    \mathbb{E}\|\nabla F(x_s)\|^2 \leq&\ 2\mathbb{E}\|\nabla F_t(x_s)\|^2 + 4L^2\mathbb{E}\|x_s - x_{s-1}\|^2 \\
    \leq&\ \frac{\varepsilon}{4} + 4L^2\mathbb{E}\|x_s - x_{s-1}\|^2.
\end{align*}
Now we need to bound $\mathbb{E}\|x_s - x_{s-1}\|^2$.
\begin{align*}
    \mathbb{E}[F(x_s)] =&\ \mathbb{E}[F_s(x_s)] - L\mathbb{E}\|x_s - x_{s-1}\|^2 \\
    =&\ \mathbb{E}[F_s(x_s) - \mathrm{min}_{x \in \mathbb{R}^d}F_s(x)] + \mathbb{E}[\mathrm{min}_{x \in \mathbb{R}^d}F_s(x)] - L\mathbb{E}\|x_s - x_{s-1}\|^2 \\
    \leq&\ \frac{\varepsilon}{16L} + \mathbb{E}[F_t(x_{s-1})] - L\mathbb{E}\|x_s - x_{s-1}\|^2 \\
    =&\ \frac{\varepsilon}{16L} + \mathbb{E}[F(x_{s-1})] - L\mathbb{E}\|x_s - x_{s-1}\|^2.
\end{align*}
Hence we obtain
\begin{align}\label{ineq: s_sgd_ef_reg_nonconvex_grad_bound}
    \mathbb{E}\mathbb\|\nabla F(x_s)\|^2 \leq \frac{\varepsilon}{2} + 4L\mathbb{E}[F(x_{s-1}) - F(x_s)].
\end{align}
Summing this inequality from $s=1$ to $S$ and divide the result by $S$ yield
$$\mathbb{E}\|\nabla F(x_{\mathrm{out}})\|^2 \leq \frac{\varepsilon}{2} + \Theta(L)\frac{F(x_{\mathrm{in}}) - F(x_*)}{S}.$$
Therefore appropriately large $S = \Theta(1 +  L\Delta/\varepsilon)$ is sufficient for ensuring $\mathbb{E}\|\nabla F(x_{\mathrm{out}})\|^2 \leq \varepsilon$. \qed
\end{pr_reg_s_snag_ef_general_nonconvex_thm}

\end{document}